 \DeclareMathOperator{\diam}{diam}
\DeclareMathOperator{\mesh}{mesh}
\DeclareMathOperator{\inte}{int} \DeclareMathOperator{\bd}{bd}
\DeclareMathOperator{\st}{St}
\DeclareMathOperator{\barycentrum}{b}
\newtheorem{theorem}{Theorem}[section]
\newtheorem{lemma}[theorem]{Lemma}
\newtheorem{corollary}[theorem]{Corollary}
\newtheorem{proposition}[theorem]{Proposition}
\newtheorem{fact}[theorem]{Fact}
\theoremstyle{definition}
\newtheorem{definition}[theorem]{Definition}
\newtheorem{example}[theorem]{Example}
\theoremstyle{remark}
\newtheorem{remark}[theorem]{Remark}
\newtheorem{question}[theorem]{Question}
\begin{document}
\title[Chain recurrent sets]{Chain recurrent sets of generic mappings on compact spaces}

\author[P. Krupski]{Pawe\l\ Krupski}
\email{Pawel.Krupski@math.uni.wroc.pl}

\author[K. Omiljanowski]{Krzysztof Omiljanowski}
\email{Krzysztof.Omiljanowski@math.uni.wroc.pl}

\author[K. Ungeheuer]{Konrad Ungeheuer}
\email{Konrad.Ungeheuer@math.uni.wroc.pl}
\address{Mathematical Institute, University of Wroc\l aw, pl.
Grunwaldzki 2/4, 50--384 Wroc\l aw, Poland}
\date{\today}
\subjclass[2010]{Primary 37B45; Secondary 54F15}
\keywords{ANR-space, chain recurrent point, continuum, generic map, $LC^{n}$-space, $n$-dimensional map, local-periodic-point property, periodic point, polyhedron, retraction}

\begin{abstract} Let 0-$CR$ denote the class of all metric compacta $X$ such that the set of maps $f:X\to X$ with 0-dimensional sets $CR(f)$ of chain recurrent points is a dense $G_\delta$-subset of the mapping space $C(X, X)$ (with the uniform convergence). We prove, among others,   that  countable products of polyhedra or locally connected curves belong to  0-$CR$. Compacta that admit, for each $\epsilon>0$, an $\epsilon$-retraction onto a subspace from 0-$CR$  belong to 0-$CR$ themselves.  Perfect ANR-compacta or $n$-dimensional $LC^{n-1}$-compacta have perfect  $CR(f)$ for a generic self-map $f$. In the cases of polyhedra, compact Hilbert cube manifolds, local dendrites and their finite products, a generic  $f$ has $CR(f)$ being a Cantor set and the set of periodic points of $f$ of arbitrarily large periods is dense in $CR(f)$. The results extend some known facts about $CR(f)$ of  generic self-maps $f$ on PL-manifolds.
\end{abstract}

\maketitle

\section{Introduction}
  For compact metric spaces  $X, Y$, we denote by $C(X,X)$  the space of all continuous maps with the topology of uniform convergence. We say that a map $f\in C(X,X)$ with a property $\mathcal P$ is \emph{generic} if the set of all maps in $C(X,X)$ with property $\mathcal P$ is residual (i.~e., contains a dense  $G_\delta$-subset) in $C(X, X)$.

 In this paper we investigate chain recurrent points of generic maps on some compacta. Recall that, given a map $f:(X,d)\to (X,d)$ and $\epsilon>0$,  a finite set $\{x=x_0, x_1,\dots, x_n=y\}\subset X$ is an \emph{$\epsilon$-chain} for $f$ from $x$ to $y$ if $d(f(x_{i-1}),x_i)<\epsilon$ for each $i=1,\dots, n$;  a point $x$ is called a \emph{chain recurrent point} of $f$ if, for each $\epsilon>0$, there exists an $\epsilon$-chain for $f$ from $x$ to $x$ (sometimes, $x$ is said to have a periodic  $\epsilon$-pseudo-orbit for each $\epsilon$). The set of chain recurrent points of $f$ is denoted by $CR(f)$. Clearly, it contains the set $Per(f)$ of  periodic points of $f$.

The notion of a chain recurrent point plays an important role in dynamical systems. Some general properties of it can be found, e.g., in books~\cite{A}, \cite{AH}, \cite{BC}.  There is an extensive literature devoted to generic properties of  maps involving chain recurrency and relative notions, where they were studied mainly for diffeo- or homeomorphisms on smooth or PL-manifolds  (see, e.g., \cite{PPSS}, \cite{Pil}, \cite{Hur1}, \cite{Hur2}, \cite{Mazur}, \cite{AHK}).

Denote by  0-$CR$ the family of all compacta $X$ such that the set $CR(f)$ is 0-dimensional for a generic map $f\in C(X,X)$. It is known that all finite graphs and PL-manifolds are in class 0-$CR$ (see~\cite{Y} and~\cite{AHK}, resp.). Generalizing these results, we prove in Section~\ref{poly} that all finite polyhedra belong to class 0-$CR$. In Section~\ref{LC}, we show that all locally connected, 1-dimensional continua are elements of 0-$CR$, as well. Moreover, using a small-retractions-technique, we derive similar results for all Menger compacta, compact Hilbert cube manifolds and some important non-locally connected continua. The technique also gives us immediately that all countable (finite or infinite) products  as well as  cones and suspensions over these compacta are members of 0-$CR$.

A stronger property of a generic map $f$, of having $CR(f)$ homeomorphic to the Cantor set $\mathcal C$, is formulated in~\cite{AHK} for PL-manifolds (the proof in~\cite{AHK}, however, is only barely sketched at the very end of the memoir).
%By adapting an idea from~\cite{Hur2} in Section~\ref{per},  we get such property for all polyhedra, local dendrites and their finite products.
In Section~\ref{per} we provide a short  proof that  $CR(f)$ has no isolated points for a generic map $f:X\to X$ if $X$ is a perfect ANR or $X$ is a perfect $n$-dimensional $LC^{n-1}$-compactum; moreover, if such $X$ has the  local periodic point property, then $Per(f)$ has no isolated points and is dense in  $CR(f)$; moreover, for each integer $l\ge 2$, periodic points of periods $\ge l$ form a dense subset of  $CR(f)$.  In particular, if $X$ is a finite polyhedron, a compact Hilbert cube manifold, a local dendrite or a finite product of these spaces, then $CR(f)$ is a Cantor set containing $Per(f)$ as a dense subset for a generic $f$.

Sometimes we can also claim that a generic self-map is  zero-dimen\-sio\-nal. This is true in the cases of ANR-compacta with coinciding finite dimensions $\dim = ped$, where $ped$ is the piecewise embedding dimension in the sense of~\cite{KM}, Menger manifolds and countable products of locally connected curves, among others.

Finally, we observe that a measure-theoretical result of~\cite{Y} that  $CR(f)$ is of measure zero for a generic map $f\in C(X,X)$, if $X$ is an $n$-dimensional $LC^{n-1}$ perfect compactum equipped with a Borel, finite, non-atomic measure, extends on all mentioned above compacta.

\section{Some general properties}
The following characterization of  chain recurrent points is very useful.
 \begin{proposition}\cite{BF}\label{proBF} For any compact space $X$ and $f\in C(X,X)$,
 $x\notin CR(f)$ if and only if there exists an open set $U\subset X$ such that $x\notin U$, $f(x)\in U$ and $f(\overline{U})\subset U$.
 \end{proposition}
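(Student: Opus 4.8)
The plan is to prove the two implications separately: the direction ``such a $U$ exists $\Rightarrow x\notin CR(f)$'' is an easy trapping argument, while the converse amounts to constructing $U$ explicitly as a ``basin of $\epsilon$-reachability''.

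\emph{First implication.} Assume $U$ is open with $x\notin U$, $f(x)\in U$ and $f(\overline U)\subset U$. Since $f(\overline U)$ is compact and lies in the open set $U$, and since $f(x)\in U$, I can pick $\epsilon>0$ so small that the $\epsilon$-ball about $f(x)$ is contained in $U$ and the $\epsilon$-neighborhood of $f(\overline U)$ is contained in $U$. I then claim no $\epsilon$-chain $x=x_0,x_1,\dots,x_n$ can satisfy $x_n=x$: the inequality $d(f(x_0),x_1)<\epsilon$ forces $x_1\in U$, and, inductively, if $x_i\in U\subset\overline U$ then $f(x_i)\in f(\overline U)$, so $d(f(x_i),x_{i+1})<\epsilon$ forces $x_{i+1}\in U$; hence $x_n\in U$, and therefore $x_n\neq x$. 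Thus $x\notin CR(f)$.

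\emph{Converse.} Assume $x\notin CR(f)$ and fix $\epsilon>0$ for which no $\epsilon$-chain runs from $x$ to $x$. Let $U$ be the set of all $y\in X$ reachable from $x$ by an $\epsilon$-chain of length $\ge 1$. Then $f(x)\in U$, via the one-step chain $x,f(x)$ (here $d(f(x),f(x))=0<\epsilon$), and $x\notin U$ by the choice of $\epsilon$. The set $U$ is open: if $y\in U$ is the endpoint of a chain $x=x_0,\dots,x_{n-1},x_n=y$, then $d(f(x_{n-1}),y)<\epsilon$, so every $y'$ with $d(y,y')<\epsilon-d(f(x_{n-1}),y)$ still satisfies $d(f(x_{n-1}),y')<\epsilon$ and hence lies in $U$. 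It then remains to check $f(\overline U)\subset U$, and this $U$ witnesses the right-hand side.

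The only nonroutine point, and where I expect to spend the most care, is the inclusion $f(\overline U)\subset U$, since $U$ need not be closed. Given $z\in\overline U$, continuity of $f$ at $z$ yields some $y\in U$ with $d(f(y),f(z))<\epsilon$; appending $f(z)$ to an $\epsilon$-chain from $x$ to $y$ (the new last step has length $d(f(y),f(z))<\epsilon$) produces an $\epsilon$-chain of positive length from $x$ to $f(z)$, so $f(z)\in U$. In particular $f(U)\subset U$ as well. Everything else is just bookkeeping with the strict inequalities in the definition of an $\epsilon$-chain together with compactness of $X$ and of $f(\overline U)$.
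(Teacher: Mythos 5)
Your proof is correct: the trapping argument (choosing $\epsilon$ so that $B_\epsilon(f(x))\subset U$ and the $\epsilon$-neighborhood of the compact set $f(\overline U)$ lies in $U$) rules out $\epsilon$-chains from $x$ to $x$, and in the converse the set $U$ of endpoints of $\epsilon$-chains of length $\ge 1$ starting at $x$ is open, omits $x$, contains $f(x)$, and satisfies $f(\overline U)\subset U$ by the continuity argument you give. The paper itself offers no proof of this proposition (it is quoted from Block--Franke \cite{BF}), and your argument is essentially the standard one from that source, so nothing further is needed.
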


 It follows easily from this proposition that (for each compact space $X$ and $f\in C(X,X)$),
 the set $CR(f)$ is  nonempty, closed and invariant under $f$~\cite{BC}.

We will use the following fact.

\begin{fact}\cite{BF}\label{f1}
The set-valued function
$$CR: C(X,X)\to 2^X, \quad f\mapsto CR(f)$$
defined on a compact metric space $X$, is upper semi-continuous.
\end{fact}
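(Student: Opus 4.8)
The plan is to deduce upper semi-continuity of $CR$ directly from the Block--Franke characterization (Proposition~\ref{proBF}) together with compactness of $X$. Spelling out what must be shown: for every $f\in C(X,X)$ and every open $U\subseteq X$ with $CR(f)\subseteq U$, we must produce $\rho>0$ such that $CR(g)\subseteq U$ whenever $\sup_{x\in X}d(f(x),g(x))<\rho$. (Equivalently, in sequential form: if $g_n\to f$ uniformly and $y_n\in CR(g_n)$ with $y_n\to y$, then $y\in CR(f)$.) So fix such an $f$ and $U$ and set $K=X\setminus U$; then $K$ is compact and $K\cap CR(f)=\emptyset$.

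The first step is localization at a point $x\in K$. Since $x\notin CR(f)$, Proposition~\ref{proBF} provides an open $W$ with $x\notin W$, $f(x)\in W$ and $f(\overline W)\subseteq W$. A short preliminary argument shows we may even assume $x\notin\overline W$: because $f(\overline W)\subseteq W\not\ni x$ and $f(x)\neq x$, a small enough closed ball $\overline B$ about $x$ misses both the compact set $f(\overline W)$ and the point $f(x)$, and then $W\setminus\overline B$ is open, still has $f(x)\in W\setminus\overline B$ and $f(\overline{W\setminus\overline B})\subseteq f(\overline W)\subseteq W\setminus\overline B$, and satisfies $\overline{W\setminus\overline B}\subseteq X\setminus B$, so its closure avoids $x$. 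Replacing $W$ accordingly, continuity of $f$ lets us pick an open ball $B_x\ni x$ so small that $\overline{B_x}\cap\overline W=\emptyset$ and $f(\overline{B_x})\subseteq W$.

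The heart of the argument is a robustness claim: with $\rho_x=\tfrac13\min\bigl\{\operatorname{dist}(f(\overline W),X\setminus W),\ \operatorname{dist}(f(\overline{B_x}),X\setminus W)\bigr\}$ --- which is positive because $f(\overline W)$ and $f(\overline{B_x})$ are compact subsets of the open set $W$ --- the \emph{same} set $W$ witnesses, via Proposition~\ref{proBF}, that $y\notin CR(g)$ for every $y\in B_x$ and every $g$ with $\sup_{z\in X}d(f(z),g(z))<\rho_x$. Indeed: $y\notin W$ since $\overline{B_x}\cap\overline W=\emptyset$; $g(y)$ lies within $\rho_x$ of $f(y)\in f(\overline{B_x})$, hence $g(y)\in W$; and for any $z\in\overline W$, $g(z)$ lies within $\rho_x$ of $f(z)\in f(\overline W)$, hence $g(z)\in W$, i.e. $g(\overline W)\subseteq W$. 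Thus $W,y,g$ satisfy the hypotheses of Proposition~\ref{proBF}, so $y\notin CR(g)$.

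Finally, globalize by compactness: the balls $\{B_x:x\in K\}$ cover $K$, so finitely many $B_{x_1},\dots,B_{x_k}$ do, and $\rho:=\min\{\rho_{x_1},\dots,\rho_{x_k}\}>0$ works. If $\sup_{z\in X}d(f(z),g(z))<\rho$ and $y\in K$, then $y\in B_{x_i}$ for some $i$ and $\rho\le\rho_{x_i}$, so the robustness claim gives $y\notin CR(g)$; hence $CR(g)\cap K=\emptyset$, i.e. $CR(g)\subseteq U$, which is exactly upper semi-continuity at $f$. I expect the only point needing genuine care to be the localization step --- securing a Block--Franke open set whose closure misses $x$, so that a whole neighbourhood $B_x$ of $x$, not merely $x$ itself, lies outside it; everything after that is a routine blend of compactness and uniform continuity. (One could instead bypass the preliminary argument by using $W\setminus\overline{B_x}$ as the witnessing set, at the cost of additionally checking $g(y)\notin\overline{B_x}$, which forces one to choose $B_x$ so that $f$ moves it off itself --- a comparable amount of bookkeeping.)
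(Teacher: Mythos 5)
Your argument is correct, and it fills in a proof that the paper itself omits: Fact~\ref{f1} is only cited from \cite{BF}, with no argument given in the text. Your route --- arranging the Block--Franke witness set $W$ of Proposition~\ref{proBF} so that $\overline W$ misses $x$, observing that the same $W$ remains a witness for every $y\in B_x$ and every $g$ with $\hat{d}(f,g)<\rho_x$ (since $f(\overline W)$ and $f(\overline{B_x})$ sit at positive distance from $X\setminus W$, which is nonempty because $x\notin W$), and then globalizing over the compact set $K=X\setminus U$ by a finite subcover --- is the standard perturbation argument and is essentially the proof one finds in Block--Franke; each step checks out, and the neighborhood statement you establish is exactly upper semi-continuity of the set-valued map $f\mapsto CR(f)$.
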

Here, $2^X$ is the hyperspace of all nonempty closed subsets of $X$ with the Hausdorff metric.

\

Recall the notion of the Urysohn coefficient $d_n(X)$ of a metric space $X$, used in dimension theory~\cite[Ch. IV, \S 45, IV]{Kur}:

$d_n(X)$ is the infimum of all $\epsilon>0$ such that there exists an open cover $\{G_0,\dots, G_m\}$ of $X$ such that $\diam G_i<\epsilon$ for each $i$ and
$$G_{i_0}\cap\dots\cap G_{i_n}=\emptyset \quad\text{for any}\quad i_0<\dots < i_n\le m. $$

In particular, if $X$ is compact, then $d_1(X)<\epsilon$ if and only if each component of $X$ has diameter $< \epsilon$.

\begin{lemma}\label{le1}
For any compact  space $X$, the set
$$\{f\in C(X,X): \dim (CR(f))=0\}$$
 is a $G_\delta$-subset of $C(X, X)$.
\end{lemma}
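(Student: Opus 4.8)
The plan is to realise the set in question as a countable intersection of open subsets of $C(X,X)$, using the Urysohn coefficient $d_1$ together with the upper semicontinuity of $CR$ (Fact~\ref{f1}). Since $CR(f)$ is always a nonempty compactum, $\dim CR(f)=0$ is equivalent to every component of $CR(f)$ being a single point, which by the remark preceding the lemma means $d_1(CR(f))<\varepsilon$ for every $\varepsilon>0$; equivalently, $d_1(CR(f))<1/k$ for every $k\in\mathbb N$. Hence it suffices to show that each set
$$A_k=\{f\in C(X,X): d_1(CR(f))<1/k\}$$
is open in $C(X,X)$, for then $\{f: \dim CR(f)=0\}=\bigcap_{k\ge 1}A_k$ is a $G_\delta$-set.

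To prove that $A_k$ is open, fix $f_0\in A_k$. By the definition of $d_1$ there are a number $\varepsilon_0<1/k$ and pairwise disjoint sets $G_0,\dots,G_m$, open in $CR(f_0)$, covering $CR(f_0)$, with $\diam G_i<\varepsilon_0$ for every $i$. Being open, pairwise disjoint and covering the compactum $CR(f_0)$, each $G_i$ is also closed in $CR(f_0)$, hence compact, and $d(G_i,G_j)>0$ whenever $i\ne j$. Choose $\delta>0$ less than both $\tfrac12\min_{i\ne j}d(G_i,G_j)$ and $\tfrac12(1/k-\varepsilon_0)$, and set $U_i=\{x\in X: d(x,G_i)<\delta\}$. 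Then the $U_i$ are open in $X$, pairwise disjoint, of diameter $<1/k$, and $U=\bigcup_i U_i$ is an open neighbourhood of $CR(f_0)$ in $X$.

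Now apply Fact~\ref{f1}: upper semicontinuity of $f\mapsto CR(f)$ yields a neighbourhood $\mathcal V$ of $f_0$ in $C(X,X)$ with $CR(f)\subset U$ for all $f\in\mathcal V$. For such an $f$, the nonempty members of the family $\{CR(f)\cap U_i: i=0,\dots,m\}$ form a finite cover of $CR(f)$ by pairwise disjoint relatively open sets, each of diameter $<1/k$; therefore $d_1(CR(f))<1/k$, i.e. $f\in A_k$. Thus $\mathcal V\subset A_k$, so $A_k$ is open, and the proof is complete.

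Most of this is routine; the two points that need a little care are squeezing an honest disjoint cover of mesh strictly below $1/k$ out of the inequality $d_1(CR(f_0))<1/k$ (so that the thickened sets $U_i$ keep diameter $<1/k$), and using compactness of the $G_i$ to separate them by the $U_i$. The decisive input is Fact~\ref{f1}: without the upper semicontinuity of $CR$ one could not transfer a cover of $CR(f_0)$ to covers of $CR(f)$ for all $f$ near $f_0$, and the openness of $A_k$ would fail.
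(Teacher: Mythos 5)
Your proof is correct, and its skeleton coincides with the paper's: the same decomposition $\{f:\dim CR(f)=0\}=\bigcap_k A_k$ with $A_k=\{f: d_1(CR(f))<1/k\}$, and the same decisive input, namely the upper semicontinuity of $f\mapsto CR(f)$ (Fact~\ref{f1}). Where you diverge is in how openness of $A_k$ is established. The paper argues by contradiction with sequences: for $f_i\to f$ outside $A_n$ it picks components $C_i$ of $CR(f_i)$ of diameter $\ge 1/n$, extracts a convergent subsequence in the hyperspace of subcontinua, and uses Fact~\ref{f1} to place the limit continuum (still of diameter $\ge 1/n$) inside $CR(f)$; this leans on the remark that, for compacta, $d_1<\epsilon$ means all components have diameter $<\epsilon$, plus compactness of the hyperspace. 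You instead work directly with the covering definition of $d_1$: a pairwise disjoint relatively open (hence relatively clopen, hence compact) cover of $CR(f_0)$ of mesh $<1/k$ is thickened by a small $\delta$ to pairwise disjoint open subsets of $X$ still of diameter $<1/k$, and the neighborhood form of upper semicontinuity transfers this cover to $CR(f)$ for all $f$ near $f_0$. Both arguments are sound; yours is a touch longer but more self-contained (no hyperspace convergence, no component characterization of $d_1$ needed at this step), while the paper's is shorter at the price of those auxiliary facts. One cosmetic point in your write-up: before taking $\min_{i\ne j} d(G_i,G_j)$ you should discard any empty $G_i$ (and note that at least one is nonempty since $CR(f_0)\neq\emptyset$); this is trivial and does not affect the argument.
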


\begin{proof}

It suffices to prove that the set
$$A_n=\{f\in C(X,X): d_1(CR(f))<\frac1n\}$$ is open in $C(X,X)$, since
$$\{f\in C(X,X): \dim  (CR(f))=0\}=\bigcap_n A_n.$$

So, let $\{f_i\}\subset  C(X, X)\setminus A_n$ and $\lim_i f_i = f$. There is a component $C_i$ of $CR(f_i)$ such that $\diam C_i\ge \frac1n$ for each $i$. Without loss of generality, assume the sequence $\{C_i\}$ converges (in the hyperspace $C(X)$ of subcontinua of $X$) to a continuum $C$.
Then $\diam C\ge\frac1n$ and, by Fact~\ref{f1},  $C\subset CR(f)$. Hence $f\in  C(X, X)\setminus A_n$.

\end{proof}

\begin{lemma}\label{le2}
If $X$ is a compact space, $r:X\to G$ is a retraction and $f\in C(G,G)$, then $CR(f)=CR(fr)$.
\end{lemma}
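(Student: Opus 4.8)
The plan is to prove the equality by two steps: first that $CR(fr)$ is automatically contained in $G$ (so both sides live in $G$), and second that on $G$ the two chain recurrent sets coincide because $fr$ restricts to $f$ there, the only subtlety being that pseudo-orbits for $fr$ may leave $G$.

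\emph{Step 1: $CR(fr)\subseteq G$.} Since $X$ is compact and $r$ continuous, $G=r(X)$ is closed in $X$, so $d(x,G)>0$ for every $x\in X\setminus G$. As $fr(X)\subseteq G$, in any $\epsilon$-chain $x_0,\dots,x_n$ for $fr$ the point $fr(x_{n-1})$ lies in $G$; hence $d(fr(x_{n-1}),x_n)<\epsilon$ is impossible when $x_n=x\in X\setminus G$ and $\epsilon<d(x,G)$, so such an $x$ is not chain recurrent for $fr$. The reverse-easy inclusion is also quick: because $r$ fixes $G$ pointwise, $fr(y)=f(y)$ for $y\in G$, so for $x\in G$ any $\epsilon$-chain for $f$ contained in $G$ is literally an $\epsilon$-chain for $fr$ in $X$; thus $CR(f)\subseteq CR(fr)$.

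\emph{Step 2 (the only real content): for $x\in G$, $x\in CR(fr)\Rightarrow x\in CR(f)$.} The idea is to push an $fr$-pseudo-orbit forward by $r$. Given $\epsilon>0$, use uniform continuity of $r$ on the compact space $X$ to pick $\delta\in(0,\epsilon]$ with $d(a,b)<\delta\Rightarrow d(r(a),r(b))<\epsilon$. Take a $\delta$-chain $x=x_0,x_1,\dots,x_n=x$ for $fr$ and put $y_i=r(x_i)\in G$, so $y_0=y_n=x$. Since $fr(x_{i-1})\in G$ we have $r\big(fr(x_{i-1})\big)=fr(x_{i-1})$, hence from $d(fr(x_{i-1}),x_i)<\delta$ we get
$$d\big(f(y_{i-1}),y_i\big)=d\big(fr(x_{i-1}),r(x_i)\big)=d\big(r(fr(x_{i-1})),r(x_i)\big)<\epsilon.$$
Therefore $\{y_0,\dots,y_n\}\subseteq G$ is an $\epsilon$-chain for $f$ from $x$ to $x$, so $x\in CR(f)$.

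Combining Steps 1 and 2 yields $CR(fr)=CR(f)$. I do not anticipate a serious obstacle; the one place demanding attention is keeping the $\delta$-versus-$\epsilon$ bookkeeping straight in Step 2 and using that $r$ leaves the intermediate images $fr(x_{i-1})$ fixed, which is exactly what makes the projected chain an $\epsilon$-chain. Alternatively one could argue via Proposition~\ref{proBF}, transporting the open set $U$ witnessing $x\notin CR(\cdot)$ back and forth along $r$ (noting $r^{-1}(U)$ is open in $X$ and $fr(\overline{r^{-1}(U)})\subseteq\overline{U}$, etc.), but the direct $\epsilon$-chain computation is shorter.
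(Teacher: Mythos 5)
Your proof is correct, but it takes a genuinely different route from the paper. You argue directly with $\epsilon$-chains: you first note $G=r(X)$ is closed and $fr(X)\subset G$, so no point of $X\setminus G$ can close up a chain (hence $CR(fr)\subset G$), and then you project an $fr$-pseudo-orbit into $G$ by $r$, using uniform continuity of $r$ on the compact $X$ to turn a $\delta$-chain for $fr$ into an $\epsilon$-chain for $f$; the identities $r(fr(x_{i-1}))=fr(x_{i-1})$ and $f(r(x_{i-1}))=fr(x_{i-1})$ are exactly the right observations, and the easy inclusion $CR(f)\subset CR(fr)$ is handled as in the paper. The paper instead never touches chains: it invokes the Block--Franke characterization (Proposition~\ref{proBF}) and, for $x\notin CR(f)$, transports the witnessing open set --- taking $U=X\setminus\{x\}$ when $x\in X\setminus G$, and $V=r^{-1}(U)$ when $x\in G$, checking $fr(\overline{V})\subset f(\overline{U})\subset U\subset V$ --- which is essentially the alternative you sketch in your closing sentence. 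What each buys: your argument is self-contained at the level of the definition of $CR$ and makes explicit where compactness enters (closedness of the retract, uniform continuity of $r$); the paper's argument is a bit slicker, needs no $\delta$--$\epsilon$ bookkeeping, and stays within the Proposition~\ref{proBF} formalism that the paper reuses elsewhere (e.g.\ in Lemma~\ref{le3}). One small point to keep in mind in your Step 1: it relies on the convention (implicit in the paper's definition) that an $\epsilon$-chain from $x$ to $x$ has at least one step, i.e.\ $n\ge 1$; otherwise every point would trivially be chain recurrent.
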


\begin{proof}
The inclusion $CR(f) \subset CR(fr)$ is obvious. So, assume  $x \notin CR(f)$. If $x \in X \setminus G$, then applying Proposition~\ref{proBF}   for $fr$ and $U=X\setminus \{x\}$, we get $x \notin CR(fr)$. If $x\in G$, then, by the same proposition, there is an open subset $U$ of $G$ such that $x \notin U$, $f(x) \in U$, $f(\overline{U}) \subset U$. Put $V= r^{-1}(U)$. Then
\begin{multline*}
x \notin V, \quad fr(x) = f(x) \in U \subset V \quad\text{and}\\
 fr( \overline{V}) = f(r( \overline{V})) \subset f( \overline{r(V)})=
f( \overline{U}) \subset U \subset V.
\end{multline*}
Again, by Proposition~\ref{proBF}, we obtain $x \notin CR(fr)$.
\end{proof}

We are now ready to formulate an approximation theorem.

\begin{theorem}\label{t:approx}
 Let $X$ be a compact space such that for each  $\epsilon >0$ there is a retraction $r_\epsilon :X \rightarrow Y_\epsilon$,  $\hat{d}(r_\epsilon, id) < \epsilon$ and  $Y_\epsilon=r_\epsilon(X)\in$  0-$CR$. Then  $X\in$ 0-$CR$.
\end{theorem}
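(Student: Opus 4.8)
The plan is to show that the set $\mathcal G = \{f \in C(X,X) : \dim CR(f) = 0\}$ is residual in $C(X,X)$ by exhibiting a dense $G_\delta$ set inside it. By Lemma~\ref{le1}, $\mathcal G$ is already a $G_\delta$; so it suffices to prove $\mathcal G$ is dense. Fix $g \in C(X,X)$ and $\delta > 0$; I want $f \in \mathcal G$ with $\hat d(f,g) < \delta$.

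First I would choose $\epsilon > 0$ small (how small to be pinned down below) and take the retraction $r_\epsilon : X \to Y_\epsilon$ with $\hat d(r_\epsilon, \id) < \epsilon$ and $Y_\epsilon \in$ 0-$CR$. Consider the restriction-type map $g' = r_\epsilon \circ g|_{Y_\epsilon} \in C(Y_\epsilon, Y_\epsilon)$. Since $Y_\epsilon \in$ 0-$CR$, the set of $h \in C(Y_\epsilon, Y_\epsilon)$ with $\dim CR(h) = 0$ is dense in $C(Y_\epsilon, Y_\epsilon)$, so I can pick such an $h$ with $\hat d(h, g') $ as small as I like, say $< \epsilon$. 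Now set $f = h \circ r_\epsilon : X \to X$. By Lemma~\ref{le2} applied to the retraction $r_\epsilon : X \to Y_\epsilon$ and the map $h \in C(Y_\epsilon, Y_\epsilon)$, we get $CR(f) = CR(h \circ r_\epsilon) = CR(h)$, which is $0$-dimensional. Hence $f \in \mathcal G$.

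It remains to estimate $\hat d(f, g)$. Writing $\hat d(f,g) = \sup_{x \in X} d(f(x), g(x))$, I would bound $d(h(r_\epsilon(x)), g(x))$ by inserting intermediate points: $d(h(r_\epsilon(x)), h(r_\epsilon(x)))$ — rather, compare $h(r_\epsilon(x))$ with $g'(r_\epsilon(x)) = r_\epsilon(g(r_\epsilon(x)))$, which is $< \epsilon$ since $\hat d(h,g') < \epsilon$; then compare $r_\epsilon(g(r_\epsilon(x)))$ with $g(r_\epsilon(x))$, which is $< \epsilon$ since $\hat d(r_\epsilon, \id) < \epsilon$; then compare $g(r_\epsilon(x))$ with $g(x)$, which is small — here I use uniform continuity of $g$: there is $\eta > 0$ such that $d(a,b) < \eta$ implies $d(g(a), g(b)) < \delta/3$, and $d(r_\epsilon(x), x) < \epsilon \le \eta$. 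So choosing $\epsilon < \min(\eta, \delta/3)$ at the outset gives $\hat d(f,g) < \delta/3 + \delta/3 + \delta/3 = \delta$, as required.

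The main obstacle — really the only delicate point — is the $\hat d(f,g)$ estimate, specifically the middle comparison where one must pass through $g$ composed with $r_\epsilon$: the smallness of $r_\epsilon$ controls $d(r_\epsilon(x),x)$ but not directly $d(g(r_\epsilon(x)), g(x))$, so the uniform continuity of $g$ must be invoked and $\epsilon$ chosen after $\eta = \eta(g, \delta)$. One should also take care that $g'$ is genuinely a self-map of $Y_\epsilon$ (it is, since $r_\epsilon$ maps into $Y_\epsilon$) so that the 0-$CR$ hypothesis for $Y_\epsilon$ applies. Everything else is a direct citation of Lemmas~\ref{le1} and~\ref{le2}.
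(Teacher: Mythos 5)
Your proof is correct and follows essentially the same route as the paper's: reduce to density via Lemma~\ref{le1}, pass to the self-map $r_\epsilon \circ g|_{Y_\epsilon}$ of $Y_\epsilon$, approximate it there by a map with zero-dimensional chain recurrent set, compose back with $r_\epsilon$, and invoke Lemma~\ref{le2} together with uniform continuity of $g$ for the distance estimate. The only differences are notational (choice of constants and how the triangle inequality is grouped), so nothing further is needed.
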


\begin{proof}
In view of Lemma~\ref{le1}, it remains to show that the set
$$\{ f \in C(X,X): \dim(CR(f))=0  \}$$
is dense in $C(X,X)$.

Fix  $\epsilon > 0$, $f \in C(X,X)$ and  $0< \gamma <\epsilon /4$ such that  $d(x,y)<\gamma$ implies $d(f(x), f(y))< \epsilon /4$. Let $r_\gamma$ be a retraction as in the hypothesis. Then
$$\hat{d}(r_\gamma fr_\gamma , \ f)< \epsilon /2,$$
and
$$\bigl((r_\gamma f)|{Y_\gamma}\bigr) r_\gamma = r_\gamma f r_\gamma.$$
Since the map $(r_\gamma f)|{Y_\gamma} : Y_\gamma \rightarrow Y_\gamma$ belongs to  $C(Y_\gamma,Y_\gamma)$, there exists $g \in C(Y_\gamma,Y_\gamma)$ such that
 $$\hat{d}\bigl((r_\gamma f)|{Y_\gamma}, g\bigr)< \epsilon /4,\quad\text{and}\quad \dim(CR(g))=0.$$

 We have
 $\hat{d}\Bigl(\bigl((r_\gamma f)|{Y_\gamma}\bigr) r_\gamma, gr_\gamma \Bigr)< \epsilon /4,$
  therefore
  $$\hat{d}(f, gr_\gamma) \leq \hat{d}(f, r_\gamma fr_\gamma) + \hat{d}(r_\gamma fr_\gamma, gr_\gamma) < \epsilon /2 + \epsilon /4 < \epsilon.$$
It also follows from Lemma~\ref{le2} that   $\dim CR(gr_\gamma)=0$, so the density is proved.
\end{proof}

\begin{example}
Here is a quick example of an application of Theorem~\ref{t:approx}: the standard $\sin\frac1x$-curve belongs to 0-$CR$.
\end{example}

\

The following simple technical lemma will be helpful in estimating a size of the chain recurrent set $CR(f)$.

\begin{lemma}\label{le3}
Let $X$ be a compact space and $f \in C(X,X)$ be such that there exist families $\{U_i \}_{i=0}^{n} $ of open subsets of $X$ and $\{K_i \}_{i=0}^{n} $ of closed subsets of $X$ satisfying
$$\begin{array}{ll}
                          & U_0 \,  \subset \, U_1  \subset  \dots  \subset \, U_{n-1}  \subset  U_n,\\
			  & \, \cup  \ \ \ \ \  \cup \  \ \ \ \ \ \ \    \ \ \ \ \  \cup \ \ \ \ \ \ \, \cup \\
			  & K_0   \subset  K_1  \subset  \dots  \subset K_{n-1} \subset K_n,\\
         \end{array}$$
and $f(U_i) \subset K_i$ for  $i=0, \dots, n$. Then
 $$CR(f) \subset \bigl((X \setminus  U_n) \cup (K_n \setminus U_{n-1}) \cup (K_{n-1} \setminus U_{n-2}) \cup \dots \cup K_0 \bigr).$$
\end{lemma}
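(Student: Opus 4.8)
The plan is to use the characterization of chain recurrent points from Proposition~\ref{proBF}: a point $x$ lies outside $CR(f)$ as soon as we can produce a single open set $U$ with $x \notin U$, $f(x) \in U$, and $f(\overline U) \subset U$. So it suffices to show that every point $x$ in the complement of the asserted set, i.e.\ every $x$ belonging to one of the ``gaps''
$$
(U_n \setminus K_n),\ (U_{n-1}\setminus K_{n-1}),\ \dots,\ (U_0 \setminus K_0)
$$
(note $X\setminus\bigl[(X\setminus U_n)\cup(K_n\setminus U_{n-1})\cup\dots\cup K_0\bigr]$ is exactly $\bigcup_{i=0}^{n}(U_i\setminus K_i)$, using the nesting), admits such a $U$, and hence is not chain recurrent.

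First I would fix $x$ and let $j$ be the \emph{least} index with $x \in U_j$; since $x\notin K_j$ (as $x$ lies in $U_j\setminus K_j$ for the relevant gap — more precisely, if $x\in U_j\setminus K_j$ we are in this situation, and if $x\in U_j\cap K_j$ then $x\in K_j\setminus U_{j-1}$ contradicting minimality unless... here one checks $x\notin K_{j}$ must hold for a point of the complement set), the minimality of $j$ gives $x \notin U_{j-1}$ when $j\ge 1$, and for $j=0$ there is simply no $U_{j-1}$ to worry about. The natural candidate is
$$
U \;=\; U_j \setminus K_{j-1}\qquad(\text{with }K_{-1}:=\emptyset\text{ when }j=0),
$$
which is open because $U_j$ is open and $K_{j-1}$ is closed. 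Then $x\notin U$ since either $x\notin U_j$ is false but $x\in K_{j-1}$... — more carefully: $x\notin U$ because $x\notin K_{j-1}$ would put $x\in U$, so I instead take $j$ to be the least index with $x\in U_j$ and additionally $x\notin K_{j-1}$ handled by the gap membership; I will arrange the bookkeeping so that $x\in K_{j-1}$, forcing $x\notin U=U_j\setminus K_{j-1}$, while still $f(x)\in U$.

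The two conditions to verify are $f(x)\in U$ and $f(\overline U)\subset U$. For the latter, $\overline{U}\subset \overline{U_j}\subset U_j$ is false in general ($U_j$ need not be closed), so instead I use $f(\overline U)\subset f(\overline{U_j})$ and the hypothesis $f(U_j)\subset K_j$ together with continuity to get $f(\overline{U_j})\subset \overline{f(U_j)}\subset \overline{K_j}=K_j$; hence $f(\overline U)\subset K_j\subset U_j$. To land inside $U=U_j\setminus K_{j-1}$ I also need $f(\overline U)\cap K_{j-1}=\emptyset$: here I use that $f(U_{j-1})\subset K_{j-1}$ and monotonicity — but this is the wrong direction, so the correct move is to note that the relevant hypothesis chain lets me shift indices, namely $f(\overline{U})\subset K_j$ and I should have chosen the gap so that the image avoids $K_{j-1}$. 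The clean argument: pick $j$ minimal with $x\in U_j$; since $x$ is in the complement set, $x\in U_j\setminus K_{j-1}$ is the scenario, meaning $x\notin K_{j-1}$, and minimality gives $x\notin U_{j-1}$, so $x\notin \overline{U_{j-1}}$... and here I should instead set $U:=U_j\setminus \overline{U_{j-1}}$ or work with $K$'s. The main obstacle — and the point requiring the most care — is exactly this index bookkeeping: matching up which $U_i$, $K_i$ to subtract so that simultaneously $x\notin U$, $f(x)\in U$, and $f(\overline U)\subset U$, all of it forced by the single family of inclusions $f(U_i)\subset K_i$ and the interleaved nesting $U_i\subset K_{i+1}$... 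I mean $K_i\subset U_i\subset K_{i+1}$. Once the right $U$ is identified, each of the three verifications is a one-line consequence of the hypotheses and of $\overline{f(U_i)}\subset K_i$ from continuity and compactness; then Proposition~\ref{proBF} finishes it.
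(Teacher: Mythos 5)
Your overall plan is the right one (and is the paper's): show that every point of $\bigcup_{i=0}^n (U_i\setminus K_i)$, which is indeed the complement of the asserted set, is excluded from $CR(f)$ by producing a witness open set as in Proposition~\ref{proBF}. But the proof never actually produces that witness, and both candidates you float would fail. With $x\in U_j\setminus K_j$ you automatically have $x\notin K_{j-1}$ (since $K_{j-1}\subset K_j$), so for $U=U_j\setminus K_{j-1}$ the very first requirement $x\notin U$ is violated; and for either $U=U_j\setminus K_{j-1}$ or $U=U_j\setminus\overline{U_{j-1}}$ the condition $f(\overline U)\subset U$ breaks down, because all the hypotheses give you is $f(\overline U)\subset\overline{f(U_j)}\subset K_j$, and $K_j$ has no reason to avoid $K_{j-1}$ or $\overline{U_{j-1}}$ (in fact $K_{j-1}\subset U_{j-1}\subset U_j$, and points of $U_{j-1}\setminus K_{j-1}\subset U$ are sent into $K_{j-1}$, so the image genuinely re-enters the part you deleted). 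Your closing sentence, that once the right $U$ is identified the verifications are one-liners, is true, but identifying that $U$ is exactly the content of the lemma and is the step you leave open; there is no index bookkeeping that makes your candidates work.

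The missing idea is much simpler: puncture $U_i$ at the point itself. For $x\in U_i\setminus K_i$ take $V=U_i\setminus\{x\}$, which is open. Then $x\notin V$; $f(x)\in f(U_i)\subset K_i\subset V$ (here $K_i\subset V$ precisely because $x\notin K_i$); and $f(\overline V)\subset f(\overline{U_i})\subset\overline{f(U_i)}\subset\overline{K_i}=K_i\subset V$. Proposition~\ref{proBF} then gives $CR(f)\cap U_i\subset K_i$ for every $i$, and intersecting with the decomposition $X=(X\setminus U_n)\cup(U_n\setminus U_{n-1})\cup\dots\cup U_0$ yields the stated inclusion. Note that no minimal index $j$ and no comparison with $K_{j-1}$ or $U_{j-1}$ is needed at the level of the single point; the nesting is only used at the end to assemble the cover of $X$.
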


\begin{proof}
For $x \in U_i \setminus K_i$ and $V= U_i \setminus \{ x \}$,
we have
$$x \notin V, \quad f(x) \in V, \quad f(\overline{V}) = f(\overline{U_i \setminus \{ x \} }) \subset \overline{f(U_i \setminus \{ x \} ) } \subset \overline{K_i} = K_i \subset V,$$
so, by Proposition~\ref{proBF}, $CR(f) \cap U_i \subset K_i$. Hence
$$ CR(f) \cap (U_i \setminus U_{i-1}) =  (CR(f) \cap U_i) \setminus U_{i-1} \subset K_i \setminus U_{i-1}.$$
Since the family $\{U_i\}_{i=0}^{n}$ is increasing, we can represent $X$ as the union
 $$X=(X \setminus  U_n) \cup (U_n \setminus U_{n-1}) \cup (U_{n-1} \setminus U_{n-2} ) \cup \dots \cup U_0$$
 and the conclusion follows.
 \end{proof}

\

\section{Polyhedra}\label{poly}
In this section we will prove that class 0-$CR$ contains all polyhedra.
A simplex $S$  with vertices  $a_0, a_1, \dots , a_n$ in a Euclidean space will be denoted by  $S=a_0 a_1 \dots  a_n$.
If  $A$ is a face of a simplex $S$, then we write  $A \leq S$
( $A < S$, if $A\neq S$).
The barycenter of $S$ is denoted by  $\barycentrum(S)$.

We will consider only finite simplicial complexes.
If $\mathcal{W}$ is a simplicial complex, then    ${\mathcal{W}^{[i]}}$  denotes its $i$-skeleton, $\mathcal{W}',\mathcal{W}'',\dots, \mathcal{W}^{(m)}$ are subsequent barycentric subdivisions of $\mathcal{W}$,  $\mesh(\mathcal{W}):= \max \{\diam (S): S \in \mathcal{W} \}$.
$|\mathcal{W}|:= \bigcup \mathcal{W}$ is a polyhedron and each simplicial complex  $\mathcal{K}$ such that $|\mathcal{W}|=|\mathcal{K}|$ is called a triangulation of  $|\mathcal{W}|$.

Given a subcomplex
$\mathcal{P}$ of $\mathcal{W}$, the star of $\mathcal{P}$ in  $\mathcal{W}$ is the set
$$\st_\mathcal{K} \mathcal{P} = |\mathcal{K}| \setminus \bigcup  \{ S \in \mathcal{K} : S \cap |\mathcal{P}| = \emptyset   \} .$$

Any simplicial vertex map $f^{[0]}:  {\mathcal{K}^{[0]}} \rightarrow {\mathcal{W}^{[0]}}$ between vertices of complexes $\mathcal{K}$ and  $\mathcal{W}$ determines   simplicial maps $f :\mathcal{K} \rightarrow \mathcal{W}$ between complexes and $|f|:|\mathcal{K}|\rightarrow |\mathcal{W}|$ between corresponding polyhedra.

\

The following lemma is crucial for this section.

\begin{lemma}\label{le4}
If $\mathcal{K}$ is a simplicial complex, then  there is a simplicial vertex map  $g^{[0]}: {(\mathcal{K}'')^{[0]}} \rightarrow {(\mathcal{K}')^{[0]}}$ such that
$$\hat{d}(|g|, id) \leq \mesh(\mathcal{K}) \quad\text{and}\quad d_1(CR(|g|)) \leq \mesh(\mathcal{K}).$$
\end{lemma}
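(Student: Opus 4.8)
The idea is to build a simplicial self-map of the second barycentric subdivision that is "close to the identity" but contracts everything toward a small set, so that the chain recurrent set gets squeezed into pieces of diameter $\le\mesh(\mathcal K)$. The natural candidate for $g^{[0]}$ on vertices of $\mathcal K''$ is: send each vertex $v$ of $\mathcal K''$ to the barycenter $\barycentrum(S)$ of the unique smallest simplex $S$ of $\mathcal K'$ containing $v$ (equivalently, the unique carrier of $v$ in $\mathcal K'$). This is a vertex map $(\mathcal K'')^{[0]}\to(\mathcal K')^{[0]}$ because barycenters of simplices of $\mathcal K'$ are precisely the vertices of $\mathcal K''$. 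One checks it is simplicial: if $v_0,\dots,v_k$ span a simplex of $\mathcal K''$, their carriers in $\mathcal K'$ are totally ordered by inclusion (this is exactly how the barycentric subdivision is defined — a simplex of $\mathcal K''$ corresponds to a flag $S_0<S_1<\dots<S_j$ of simplices of $\mathcal K'$), so the images $\barycentrum(S_i)$ all lie in the simplex of $\mathcal K''$ dual to that flag. The bound $\hat d(|g|,\id)\le\mesh(\mathcal K)$ follows because $|g|(x)$ and $x$ always lie in a common simplex of $\mathcal K$ (each simplex of $\mathcal K'$, hence of $\mathcal K''$, sits inside a simplex of $\mathcal K$, and $|g|$ maps that simplex of $\mathcal K'$ into itself), and simplices of $\mathcal K$ have diameter $\le\mesh(\mathcal K)$.

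The heart of the matter is the estimate $d_1(CR(|g|))\le\mesh(\mathcal K)$, and here I would invoke Lemma~\ref{le3}. The dynamics of $|g|$ is a "flow toward the barycenters": iterating $|g|$ pushes points into lower skeleta of $\mathcal K'$ and eventually onto the vertex set $(\mathcal K')^{[0]}$, which $|g|$ fixes. Concretely, order the simplices of $\mathcal K'$ and define, for each $i$ from $0$ up to $\dim\mathcal K'$, the open set $U_i=\st_{\mathcal K''}((\mathcal K')^{[i]})''$ — the union of open stars (in $\mathcal K''$) of vertices of $\mathcal K''$ lying in the $i$-skeleton of $\mathcal K'$ — together with a closed set $K_i$ that is a slightly shrunk (subcomplex-of-a-further-subdivision) version of the $i$-skeleton of $\mathcal K'$ thickened just enough to contain $|g|(U_i)$. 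One wants the chain of inclusions $U_0\subset K_0\subset U_1\subset K_1\subset\dots$ and $|g|(U_i)\subset K_i$; then Lemma~\ref{le3} gives $CR(|g|)\subset\bigcup_i(K_i\setminus U_{i-1})\cup K_0$, and each of the sets $K_i\setminus U_{i-1}$ consists of pieces lying near a single top-dimensional-relative-to-$i$ cell, each of diameter controlled by $\mesh(\mathcal K')\le\mesh(\mathcal K)$ — in fact the components of $CR(|g|)$ will be contained in individual closed simplices of $\mathcal K$. The point is that $|g|$ sends the open star of a vertex $\barycentrum(S)$ of $\mathcal K'$ strictly inside the closed star, collapsing the "outward radial" directions, so no $\epsilon$-chain can escape a single simplex of $\mathcal K$ once $\epsilon$ is small.

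The main obstacle is the bookkeeping in the previous paragraph: one has to choose the families $\{U_i\}$ and $\{K_i\}$ so that simultaneously (a) $U_{i-1}\subset K_{i-1}\subset U_i$ holds honestly as open/closed sets, (b) $|g|(U_i)\subset K_i$, and (c) the "leftover" sets $K_i\setminus U_{i-1}$ genuinely have components of diameter $\le\mesh(\mathcal K)$. The cleanest route is probably to work one more subdivision down (use $\mathcal K''$ and its subdivisions, or pass to $(\mathcal K')^{(m)}$ for suitable $m$) so that the closed sets $K_i$ can be taken as honest subpolyhedra sandwiched between the open stars; the inclusion $|g|(U_i)\subset K_i$ then reduces to the combinatorial statement that $|g|$ lowers the carrier-dimension in $\mathcal K'$ of every point not already on the vertex set, which is immediate from the definition of $g^{[0]}$ on barycenters. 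Verifying (c) amounts to noting each residual piece lies in a single closed simplex of $\mathcal K$. Once the stratification is set up correctly, everything else is routine.
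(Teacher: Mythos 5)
Your high-level strategy (a simplicial vertex map $(\mathcal{K}'')^{[0]}\to(\mathcal{K}')^{[0]}$ close to the identity, then Lemma~\ref{le3} with a skeletal filtration) is the paper's, but the two steps you leave to "bookkeeping" are exactly where your version breaks. First, the map you define is not the intended one: a vertex $v$ of $\mathcal{K}''$ \emph{is} the barycenter of its carrier in $\mathcal{K}'$, so "send $v$ to the barycenter of the smallest simplex of $\mathcal{K}'$ containing $v$" returns $v$ itself; that is the identity on $(\mathcal{K}'')^{[0]}$, it does not land in $(\mathcal{K}')^{[0]}$ (your parenthetical justification only shows the image lies in $(\mathcal{K}'')^{[0]}$), and of course $CR(\id)=|\mathcal{K}|$. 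If instead you meant the carrier in $\mathcal{K}$, you get the map sending $\barycentrum\bigl(\barycentrum(S_0)\cdots\barycentrum(S_k)\bigr)$ (with $S_0>\cdots>S_k$ in $\mathcal{K}$) to $\barycentrum(S_0)$, the \emph{largest} member of the flag; its dynamics pushes points away from the skeleta of $\mathcal{K}$ toward barycenters of top simplices, which is the opposite of the "flow toward lower skeleta" your Lemma~\ref{le3} setup requires. The map that works is the one sending this vertex to $\barycentrum(S_k)$, the barycenter of the \emph{minimal} member of the flag; it is not the barycenter of any carrier, and it is what gives $|g|(\st_{\mathcal{K}''}\barycentrum(S))\subset |S|$ for $S\in\mathcal{K}$.

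Second, even for the correct map, your filtration by skeleta of $\mathcal{K}'$ does not satisfy the hypothesis $|g|(U_i)\subset K_i$ with $K_i$ a thin thickening of $|(\mathcal{K}')^{[i]}|$. For example, let $\mathcal{K}$ consist of a $3$-simplex $S$ with facet $F=a_0a_1a_2$ and consider the vertex $v=\barycentrum\bigl(\barycentrum(S)\barycentrum(F)\bigr)\in|(\mathcal{K}')^{[1]}|$; the $\mathcal{K}''$-simplex on the flag $\barycentrum(S)\barycentrum(F)\barycentrum(a_0a_1)a_0>\barycentrum(S)\barycentrum(F)\barycentrum(a_0a_1)>\barycentrum(S)\barycentrum(F)>\{\barycentrum(S)\}$ lies in $\st_{\mathcal{K}''}(v)\subset U_1$, yet its $|g|$-image is the full $3$-simplex $a_0\,\barycentrum(a_0a_1)\,\barycentrum(F)\,\barycentrum(S)$ of $\mathcal{K}'$, which reaches from a vertex of $\mathcal{K}$ to the barycenter of $S$ and is contained in no small neighborhood of $|(\mathcal{K}')^{[1]}|$. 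The filtration has to be by skeleta of $\mathcal{K}$, not of $\mathcal{K}'$: take $K_i=|\mathcal{K}^{[i]}|$ and $U_i=\inte\overline{W_i}$ where $W_i$ is the union of $\mathcal{K}''$-stars of $\barycentrum(S)$ over faces $S$ of $i$-simplices of $\mathcal{K}$; then $|g|(U_i)=|\mathcal{K}^{[i]}|$, Lemma~\ref{le3} applies, and the diameter bound for components of $CR(|g|)$ follows because $|\mathcal{K}^{[i-1]}|$ separates $|\mathcal{K}^{[i]}|$, so each component of $|\mathcal{K}^{[i]}|\setminus U_{i-1}$ lies in a single $i$-simplex of $\mathcal{K}$. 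You also need this separation argument for your step (c), which is asserted but not proved. As written, the proposal therefore has a genuine gap: the map is wrong (or undefined), and the stratification it relies on cannot be completed.
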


\begin{proof}
Suppose $\dim |\mathcal{K}|=n$. Represent the first barycentric subdivision of $\mathcal{K}$ in the canonical way:
\begin{multline*}
\mathcal{K}'=\\
\{\barycentrum(S_0)\barycentrum(S_1) \dots \barycentrum(S_k): S_0 > S_1 > \dots > S_k,\,  S_0,\dots,S_k \in \mathcal{K},\, k \leq n \}.
\end{multline*}
Observe that
\begin{multline*}
{(\mathcal{K}'')^{[0]}}=\\
\{\barycentrum\bigl(\barycentrum(S_0)\barycentrum(S_1) \dots \barycentrum(S_k)\bigr):S_0 > S_1 > \dots > S_k,\, S_0, \dots , S_k \in \mathcal{K},\, k \leq n  \}.
\end{multline*}
Define  $g^{[0]}: {(\mathcal{K}'')^{[0]}} \rightarrow {(\mathcal{K}')^{[0]}}$  by
$$g^{[0]}\bigl(\barycentrum(\barycentrum(S_0)\barycentrum(S_1) \dots \barycentrum(S_k))\bigr)=\barycentrum(S_k),$$
where $S_0 > S_1 > \dots > S_k$, $S_0,  \dots , S_k \in \mathcal{K}$.

Since each vertex $x \in (\mathcal{K}'')^{[0]}$ is uniquely represented in the form
$$x=\barycentrum\bigl(\barycentrum(S_0)\barycentrum(S_1) \dots \barycentrum(S_k)\bigr),$$
the map $g^{[0]}$ is well defined  (Figure~\ref{Fig1}).

\begin{figure}[h]
%\centering
\includegraphics[width=102mm]{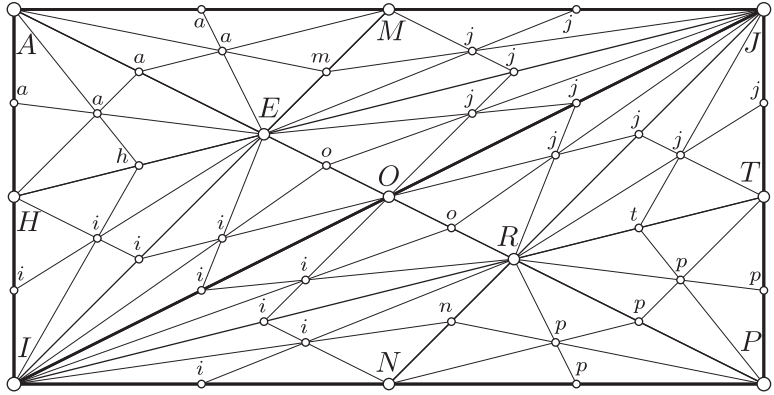}
%\mbox{\null}\hfill%
%\begin{minipage}{12cm}
\caption{The action of $g$ on $\mathcal{K}''$ for a rectangle (simplices of $\mathcal{K}$ are drawn with thick lines). Vertices in $(\mathcal{K}'')^{[0]}$  are labeled by capital and small letters $X, x$ so that   $g^{[0]}(x) = X$ and  $g^{[0]}(X)= X$.}\label{Fig1}
%\end{minipage}
%\hfill\mbox{\null}
\end{figure}

Moreover, $g^{[0]}$ is a simplicial vertex map. Indeed, if $a_0 a_1 \dots a_t \in \mathcal{K}''$, where
$$a_i=\barycentrum\bigl(\barycentrum(S_0^i)\barycentrum(S_1^i) \dots \barycentrum(S_{k(i)}^i)\bigr), \quad  i=0,\dots ,t,$$
then there is a simplex  $S \in \mathcal{K}'$ such that
$$\barycentrum(S_0^i)\barycentrum(S_1^i) \dots \barycentrum(S_{k(i)}^i) \leq  S,\quad i=0, \dots ,t,$$
in particular $\barycentrum(S_{k(i)}^i)$ is a vertex of $S$. Hence, the set
$$\{ \barycentrum(S_{k(0)}^0), \barycentrum(S_{k(1)}^1), \dots  ,\barycentrum(S_{k(t)}^t)\}=g^{[0]}\bigl((a_0 a_1 \dots a_t)^{[0]}\bigr)$$ spans a face of $S$, so it is a simplex in $\mathcal{K}'$.

Notice that $|g|(\st_{\mathcal{K}''}\barycentrum(S_0))\subset |S_0|$ for each $S_0 \in \mathcal{K}$. The closure of $\bigcup \{\st_{\mathcal{K}''}\barycentrum(S): S \leq S_0\}$ contains  $|S_0|$ in its interior for each $S_0 \in \mathcal{K}$ (Figure~\ref{Fig2}).

\begin{figure}[h]
\centering
\includegraphics[width=107mm]{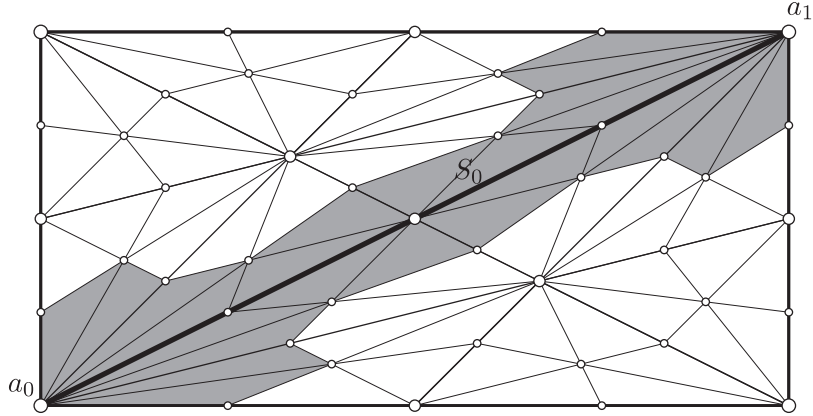}
%\mbox{\null}\hfill%
%\begin{minipage}{12cm}
\caption{The shadowed area shows the closure of $\bigcup \{\st_{\mathcal{K}''}b(S):S \leq S_0\}$ which is mapped by $|g|$ onto $S_0=a_0 a_1$.}\label{Fig2}
%\end{minipage}
%\hfill\mbox{\null}
\end{figure}

Let
$$ W_i = \bigcup_{S_0 \in \mathcal{K}^{[i]}} \bigcup \{ \st_{\mathcal{K}''}\barycentrum(S):S \leq S_0 \}.$$
 The set $U_i = \inte \overline{W_i}$ is open in $|\mathcal{K}|$, contains $|\mathcal{K}^{[i]}|$ and $|g|(U_i) = |\mathcal{K}^{[i]}|$ for $i=0, \dots, n$ ($|\mathcal{K}^{[n]}|=|\mathcal{K}|=U_n$).

The map $|g|$, families  $\{U_i\}_{i=0}^{n-1}$ and  $\{|\mathcal{K}^{[i]}|\}_{i=0}^{n-1}$ satisfy hypotheses of Lemma~\ref{le3}. Hence,
$$CR(|g|) \subset (|\mathcal{K}|\setminus U_{n-1}) \cup (|\mathcal{K}^{[n-1]}|\setminus U_{n-2}) \cup (|\mathcal{K}^{[n-2]}|\setminus U_{n-3}) \cup \dots \cup |\mathcal{K}^{[0]}|.$$

Since $|\mathcal{K}^{[i-1]}|$ separates $|\mathcal{K}^{[i]}|$, the set $U_{i-1}$ separates $U_{i}$, hence
$$d_1(|\mathcal{K}^{[i]}| \setminus U_{i-1}) \leq
d_1(|\mathcal{K}^{[i]}| \setminus |\mathcal{K}^{[i-1]}|) \leq
 \mesh (\mathcal{K}), \quad i=0, \dots , n.$$
Thus $d_1(CR(|g|)) \leq \mesh (\mathcal{K})$.
Clearly, we have  $\hat{d} (|g|, id) \leq \mesh (\mathcal{K})$.

\end{proof}

\begin{lemma}\label{le5}
Let $\mathcal{L}$ be a simplicial complex. Then for each simplicial vertex map $f^{[0]}:(\mathcal{L}^{(k+1)})^{[0]} \rightarrow \mathcal{L}^{[0]}$ there is a simplicial vertex map  $h^{[0]} :(\mathcal{L}^{(k+2)})^{[0]} \rightarrow \mathcal{L}^{[0]}$ such that
$$d_1(CR(|h|)) \leq \mesh(\mathcal{L})\quad\text{and}\quad \hat{d}(|f|, |h|) \leq \mesh(\mathcal{L}).$$
\end{lemma}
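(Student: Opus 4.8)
The plan is to deduce Lemma~\ref{le5} from Lemma~\ref{le4} by composing the given $f$ with the ``barycentric retraction'' map that Lemma~\ref{le4} supplies. Put $\mathcal{K}:=\mathcal{L}^{(k)}$, so that $\mathcal{K}'=\mathcal{L}^{(k+1)}$ and $\mathcal{K}''=\mathcal{L}^{(k+2)}$, and let $n:=\dim|\mathcal{L}|$. Applying Lemma~\ref{le4} to $\mathcal{K}$ gives a simplicial vertex map $g^{[0]}:(\mathcal{L}^{(k+2)})^{[0]}\to(\mathcal{L}^{(k+1)})^{[0]}$ with $\hat d(|g|,\id)\le\mesh(\mathcal{K})$, together with the open sets $U_0\subset\dots\subset U_{n-1}$ of its proof, which satisfy $|\mathcal{K}^{[i]}|\subset U_i$ and $|g|(U_i)=|\mathcal{K}^{[i]}|$. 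I would then define
$$h^{[0]}:=f^{[0]}\circ g^{[0]}:(\mathcal{L}^{(k+2)})^{[0]}\to\mathcal{L}^{[0]},$$
the composite of the simplicial vertex maps $g:\mathcal{L}^{(k+2)}\to\mathcal{L}^{(k+1)}$ and $f:\mathcal{L}^{(k+1)}\to\mathcal{L}$, so that $|h|=|f|\circ|g|$.

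To obtain $\hat d(|f|,|h|)\le\mesh(\mathcal{L})$ I would first observe that $|g|$ never moves a point out of its carrier simplex in $\mathcal{L}^{(k+1)}$: if the carrier of $x$ in $\mathcal{L}^{(k+2)}$ is $w_0\dots w_t$ with $w_l=\barycentrum(\beta_l)$ and $\beta_0>\dots>\beta_t$ faces in $\mathcal{L}^{(k+1)}$, then by the explicit formula for $g^{[0]}$ each $g^{[0]}(w_l)$ is a vertex of $\beta_l$, hence of $T:=\beta_0$, the carrier of $x$ in $\mathcal{L}^{(k+1)}$; since $x\in|T|$ as well, $|g|(x)\in|T|$. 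As $|f|$ is affine on $|T|$ and $|f|(T)$ is a simplex of $\mathcal{L}$, we get $d(|f|(x),|h|(x))=d\bigl(|f|(x),|f|(|g|(x))\bigr)\le\diam|f|(T)\le\mesh(\mathcal{L})$.

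For the chain recurrent set, put $K_i:=|f|(|\mathcal{K}^{[i]}|)$ for $i=0,\dots,n-1$. These are compact and nested, $|h|(U_i)=|f|(|g|(U_i))=|f|(|\mathcal{K}^{[i]}|)=K_i$, and $K_i\subset|\mathcal{K}^{[i]}|\subset U_i$; the first inclusion is the crux of the argument, and holds because $K_i$ is the polytope of a subcomplex of $\mathcal{L}$ of dimension $\le i$ (the image under the simplicial map $f$ of $\mathcal{K}^{[i]}$, triangulated as a subcomplex of $\mathcal{L}^{(k+1)}$), while $\mathcal{K}=\mathcal{L}^{(k)}$ subdivides $\mathcal{L}$, so every simplex of $\mathcal{L}$ of dimension $\le i$ is swept out by simplices of $\mathcal{K}$ of dimension $\le i$, i.e.\ lies in $|\mathcal{K}^{[i]}|$. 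Then $|h|$, together with the families $\{U_i\}_{i=0}^{n-1}$ and $\{K_i\}_{i=0}^{n-1}$, satisfies the hypotheses of Lemma~\ref{le3}, whence
$$CR(|h|)\subset(|\mathcal{L}|\setminus U_{n-1})\cup(K_{n-1}\setminus U_{n-2})\cup\dots\cup(K_1\setminus U_0)\cup K_0.$$

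Finally, I would bound the components of the right-hand side just as in the proof of Lemma~\ref{le4}. Since $K_0\subset|\mathcal{K}^{[0]}|$, $K_j\setminus U_{j-1}\subset|\mathcal{K}^{[j]}|\setminus|\mathcal{K}^{[j-1]}|$ for $1\le j\le n-1$, and $|\mathcal{L}|\setminus U_{n-1}\subset|\mathcal{K}|\setminus|\mathcal{K}^{[n-1]}|$, these $n+1$ sets are closed in $|\mathcal{L}|$ and are contained in the pairwise disjoint families of open simplices of $\mathcal{K}$ of dimensions $0,1,\dots,n$, respectively; being closed and pairwise disjoint they are mutually separated, so every component of $CR(|h|)$ lies in just one of them, and there inside a single open simplex of $\mathcal{K}$, hence has diameter $\le\mesh(\mathcal{K})$. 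Thus $d_1(CR(|h|))\le\mesh(\mathcal{K})\le\mesh(\mathcal{L})$. I expect the only genuinely substantive step to be the inclusion $|f|(|\mathcal{K}^{[i]}|)\subset|\mathcal{K}^{[i]}|$; the rest is bookkeeping built on Lemmas~\ref{le3} and~\ref{le4}.
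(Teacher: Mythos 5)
Your proposal is correct and follows essentially the same route as the paper: take $h^{[0]}=f^{[0]}g^{[0]}$ with $g$ from Lemma~\ref{le4} applied to $\mathcal{K}=\mathcal{L}^{(k)}$, use the key inclusion $|f|(|(\mathcal{L}^{(k)})^{[i]}|)\subset|\mathcal{L}^{[i]}|\subset|(\mathcal{L}^{(k)})^{[i]}|$ to verify the hypotheses of Lemma~\ref{le3} with the same sets $U_i$, and bound components of $CR(|h|)$ by the mesh of $\mathcal{L}^{(k)}$. The only differences are cosmetic: you take $K_i=|f|(|(\mathcal{L}^{(k)})^{[i]}|)$ instead of the skeletons themselves, and you spell out the carrier argument behind the paper's ``clearly'' estimate $\hat d(|f|,|h|)\leq\mesh(\mathcal{L})$.
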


\begin{proof}
Suppose $\dim |\mathcal{L}|=n$. Let $g^{[0]}:{(\mathcal{L}^{(k+2)})^{[0]}} \rightarrow {(\mathcal{L}^{(k+1)})^{[0]}}$ be a map as in Lemma~\ref{le4} and $h^{[0]}:=f^{[0]}g^{[0]}$. We have
$$|h|(|(\mathcal{L}^{(k)})^{[i]}|) \subset |\mathcal{L}^{[i]}| \subset |(\mathcal{L}^{(k)})^{[i]}| \quad\text{for}\quad i=0, \dots , n.$$
Put
$$W_i= \bigcup_{S_0 \in (\mathcal{L}^{(k)})^{[i]}} \bigcup \{ \st_{\mathcal{L}^{(k+2)}}\barycentrum(S):S \leq S_0 \}.$$
The map $|h|$ and families  $\{|(\mathcal{L}^{(k)})^{[i]}|\}^{n-1}_{i=0}$ and $\{U_i\}^{n-1}_{i=0}$, where $U_i = \inte \overline{W_i}$ satisfy hypotheses of Lemma~\ref{le3}, since
$$|h|(U_i)=|f|(|g|(U_i))=|f|(|(\mathcal{L}^{(k)})^{[i]}|) \subset |\mathcal{L}^{[i]}| \subset |(\mathcal{L}^{(k)})^{[i]}|.$$
Now, we proceed similarly to the proof of Lemma~\ref{le4}. Namely, we have
$$CR(|h|) \subset (|(\mathcal{L}^{(k)})|\setminus U_{n-1}) \cup (|(\mathcal{L}^{(k)})^{[n-1]}|\setminus U_{n-2}) \cup \dots \cup |(\mathcal{L}^{(k)})^{[0]}|$$
and $|(\mathcal{L}^{(k)})^{[i-1]}|$ separates $|(\mathcal{L}^{(k)})^{[i]}|$. This yields  inequalities
$$d_1(|(\mathcal{L}^{(k)})^{[i]}| \setminus U_{i-1}) \leq  d_1(|(\mathcal{L}^{(k)})^{[i]}| \setminus |(\mathcal{L}^{(k)})^{[i-1]}|)  \leq \mesh (\mathcal{L}^{(k)}),$$
for  $i=0, \dots , n$. So, if $C$ is a component of  $CR(|h|)$, $S \in \mathcal{L}^{(k)}$ and $C \cap S \neq \emptyset$, then $C \cap A = \emptyset$  for $A < S$ . Hence
 $$d_1(CR(|h|)) \leq \mesh (\mathcal{L}^{(k)}) \leq \mesh(\mathcal{L}).$$
Clearly,  $\hat{d} (|h|, |f|) \leq \mesh (\mathcal{L})$.
 \end{proof}

\begin{theorem}\label{t:poly}
Polyhedra belong to class 0-$CR$.
\end{theorem}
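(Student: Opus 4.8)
The plan is to combine Lemma~\ref{le5} with the general machinery of Section~2, exactly as the two preparatory lemmas were designed. Fix a polyhedron $P=|\mathcal{L}|$ for some finite simplicial complex $\mathcal{L}$. By Lemma~\ref{le1} the set $\{f\in C(P,P):\dim CR(f)=0\}$ is a $G_\delta$, so it suffices to prove it is dense. Let $f\in C(P,P)$ and $\epsilon>0$ be given. First I would approximate $f$, up to $\epsilon/3$, by a map of the form $|f^{[0]}|$ for some simplicial vertex map $f^{[0]}:(\mathcal{L}^{(k+1)})^{[0]}\to\mathcal{L}^{[0]}$: choosing $k$ large enough that $\mesh(\mathcal{L}^{(k+1)})$ is small relative to a modulus of uniform continuity of $f$, the simplicial approximation theorem gives a simplicial map from a fine subdivision of $\mathcal{L}$ into $\mathcal{L}$ whose realization is $\epsilon/3$-close to $f$; since the domain subdivision may be taken to be $(\mathcal{L}^{(k+1)})$ itself, this has the required form.

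Next I would apply Lemma~\ref{le5} to $f^{[0]}$ to obtain a simplicial vertex map $h^{[0]}:(\mathcal{L}^{(k+2)})^{[0]}\to\mathcal{L}^{[0]}$ with $\hat d(|h|,|f|)\le\mesh(\mathcal{L})$ and $d_1(CR(|h|))\le\mesh(\mathcal{L})$. This one application is not enough, because $\mesh(\mathcal{L})$ is fixed, so the trick is to run Lemma~\ref{le5} not on $\mathcal{L}$ but on a sufficiently fine iterated barycentric subdivision $\mathcal{L}^{(m)}$, whose mesh is below any prescribed threshold. Concretely: choose $m$ so large that $\mesh(\mathcal{L}^{(m)})<\epsilon/3$; note $f^{[0]}$, being simplicial into $\mathcal{L}^{[0]}\subset(\mathcal{L}^{(m)})^{[0]}$, can be regarded as a simplicial vertex map $(\mathcal{L}^{(m+k+1)})^{[0]}\to(\mathcal{L}^{(m)})^{[0]}$ after composing with the inclusion of skeleta (or one simply restarts the approximation with $\mathcal{L}^{(m)}$ in place of $\mathcal{L}$ from the beginning); then Lemma~\ref{le5}, applied with $\mathcal{L}^{(m)}$ in the role of $\mathcal{L}$, produces $h^{[0]}$ with $\hat d(|h|,|f|)<\epsilon/3+\epsilon/3<\epsilon$ and $d_1(CR(|h|))\le\mesh(\mathcal{L}^{(m)})<\epsilon/3$. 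Since $d_1(CR(|h|))$ can be made smaller than any $\delta>0$ by this construction, one actually concludes $\dim CR(|h|)=0$: more carefully, iterate the construction, at stage $j$ replacing the current map by one which is $2^{-j}$-close and whose $CR$-set has all components of diameter $<2^{-j}$, taking a uniform limit $f_\infty$ with $\hat d(f_\infty,f)<\epsilon$; Fact~\ref{f1} (upper semicontinuity of $CR$) then forces every component of $CR(f_\infty)$ to be degenerate, so $\dim CR(f_\infty)=0$. This places a zero-dimensional-$CR$ map within $\epsilon$ of $f$, proving density.

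I expect the main obstacle to be the bookkeeping in passing from ``$d_1(CR)$ arbitrarily small'' to ``$\dim CR = 0$'': a single simplicial map only bounds the component sizes of its own chain recurrent set, and Lemma~\ref{le5} changes the map each time it is applied, so one must organise a convergent sequence of approximations and invoke upper semicontinuity to take the limit, rather than hoping a single $|h|$ already has zero-dimensional $CR$. The alternative, cleaner route is to observe that Lemma~\ref{le5} together with Lemma~\ref{le2}-style stability and the approximation scheme of Theorem~\ref{t:approx} is exactly set up so that the limiting argument from Lemma~\ref{le1}'s proof applies verbatim; the remaining routine points — that $|f^{[0]}|$ can be made simplicial on a prescribed subdivision, that iterated barycentric subdivision drives the mesh to zero, and that composing with skeleton inclusions keeps everything simplicial — are standard and I would state them without detailed verification.
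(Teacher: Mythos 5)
Your construction is essentially the paper's: approximate $F$ simplicially with respect to a triangulation of small mesh (your ``restart with $\mathcal{L}^{(m)}$ in place of $\mathcal{L}$'' option is the right one; the ``compose with the inclusion of skeleta'' variant is shaky, since a map simplicial with respect to $(\mathcal{L}^{(k+1)},\mathcal{L})$ need not be simplicial with respect to finer subdivisions of the target), then apply Lemma~\ref{le5}. This correctly produces, for every $f\in C(X,X)$, $\epsilon>0$ and $m$, a map $H$ with $\hat d(H,f)<\epsilon$ and $d_1(CR(H))<1/m$; that is, it proves density of each set $A_m=\{f: d_1(CR(f))<1/m\}$, which is exactly what the paper's proof of Theorem~\ref{t:poly} does.

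The gap is your concluding limit argument. Upper semicontinuity of $CR$ (Fact~\ref{f1}) says that the upper limit of the sets $CR(f_j)$ is contained in $CR(f_\infty)$ when $f_j\to f_\infty$; it bounds the chain recurrent sets of the approximants by that of the limit, not the other way around, so it cannot force the components of $CR(f_\infty)$ to be degenerate. Concretely, on the polyhedron $X=[0,1]$ the maps $f_j(x)=x-\tfrac1j x(1-x)$ satisfy $CR(f_j)=\{0,1\}$ yet converge uniformly to the identity, whose chain recurrent set is all of $[0,1]$; so an iteration that only keeps successive maps $2^{-j}$-close gives no control whatsoever on $CR(f_\infty)$. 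The repair is that you never need to exhibit a single nearby map with $\dim CR=0$: the proof of Lemma~\ref{le1} shows each $A_m$ is \emph{open}, your construction shows each $A_m$ is \emph{dense}, and hence $\{f:\dim CR(f)=0\}=\bigcap_m A_m$ is a dense $G_\delta$ by the Baire category theorem --- which is precisely membership in 0-$CR$ and is how the paper finishes. (If you insist on a limit construction, you must additionally require that every perturbation after stage $j$ stay inside the open set $A_{2^j}$ around the current map --- e.g.\ smaller than the distance from that map to the complement of $A_{2^j}$ --- but that is just re-proving Baire's theorem by hand.)
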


\begin{proof}
Let $X$ be a polyhedron. In view of Lemma~\ref{le1} it suffices to show that the set
$$A_m = \left\{ f \in C(X,X): d_1 (CR(f)) < 1/m \right\}$$ is dense in $C(X,X)$ for each $m$.
Fix arbitrary $F \in C(X,X)$ and $0< \epsilon < 1/m$.
Choose a triangulation $\mathcal{L}$ of $X$  with $\mesh(\mathcal{L}) < \epsilon$ and a simplicial approximation $f^{[0]}:(\mathcal{L}^{(k+1)})^{[0]}\rightarrow \mathcal{L}^{[0]}$ of $F$ such that $\hat{d}(|f|, F) \leq \mesh(\mathcal{L})$.
By Lemma~\ref{le5}, there exists $h^{[0]} :(\mathcal{L}^{(k+2)})^{[0]} \rightarrow \mathcal{L}^{[0]}$ such that $\hat{d} (|h|,|f|)< \mesh(\mathcal{L})$ and  $d_1(CR(|h|))< \mesh(\mathcal{L})$. Hence, the map  $H:= |h|$ satisfies $\hat{d} (F, H) \leq 2 \mesh(\mathcal{L}) < 2 \epsilon$ and $H \in A_m$.
\end{proof}

\

\section{Locally connected curves}\label{LC}
By a curve we mean a one-dimensional continuum. In this section we present two different proofs of the following theorem.

\begin{theorem}\label{t:LCcurves}
Locally connected curves are in  class 0-$CR$.
\end{theorem}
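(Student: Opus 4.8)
The plan is to mimic the polyhedral argument of Section~\ref{poly}, replacing triangulations by a suitable sequence of ``graph-like'' approximations available for locally connected curves. A locally connected curve $X$ is, by classical results (the Hahn--Mazurkiewicz theory and Menger--Nöbeling, or simply the fact that $1$-dimensional locally connected continua are inverse limits of finite graphs with near-homeomorphic, in fact retraction-like, bonding maps), approximable from within: for every $\epsilon>0$ there is a finite graph $G_\epsilon\subset X$ and a retraction $r_\epsilon\colon X\to G_\epsilon$ with $\hat d(r_\epsilon,\id)<\epsilon$. (This uses local connectedness crucially: one covers $X$ by finitely many connected open sets of small diameter, takes a partition-of-unity-style nerve, and realizes the nerve as a subgraph using arcs inside the pieces; local connectedness is what guarantees the arcs and the retraction exist with small displacement.) Once this is in hand, Theorem~\ref{t:approx} applies \emph{provided} finite graphs belong to 0-$CR$ --- and that is the already-cited result of Yano (\cite{Y}), or, alternatively, a special case of Theorem~\ref{t:poly} since finite graphs are $1$-dimensional polyhedra.

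So the first, and essentially the only substantive, step is to establish the small-retraction property: \emph{for each $\epsilon>0$ a locally connected curve $X$ admits a retraction onto a finite subgraph $\epsilon$-close to the identity.} I would argue as follows. By local connectedness and compactness, choose a finite open cover $\mathcal U=\{U_1,\dots,U_k\}$ of $X$ by connected open sets with $\diam U_j<\epsilon/3$. Using $\dim X=1$ and a theorem of Nöbeling/Menger on embedding curves, or directly the structure of locally connected curves, build inside $\bigcup U_j$ a finite graph $G$ that is a ``geometric realization'' of the nerve of a refinement of $\mathcal U$ and that carries a retraction $r\colon X\to G$ with each point moved less than $\epsilon$. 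The cleanest route is to invoke the known fact that locally connected curves are exactly the curves that are \emph{absolute retracts for the class of curves in a local sense}, equivalently they are $LC^1$-continua of dimension $1$, hence (by Borsuk's or Kuratowski's approximation theorems for ANR-like spaces, see the $LC^n$ machinery referenced for Section~\ref{per}) admit arbitrarily fine retractions onto finite polyhedral subspaces; in dimension $1$ those subspaces are finite graphs. Then $G=G_\epsilon$, $r=r_\epsilon$ are as required and Theorem~\ref{t:approx} finishes the proof.

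The second announced proof (the paper promises ``two different proofs'') I would organize intrinsically, without passing to subgraphs: repeat the skeleton-filtration idea of Lemmas~\ref{le3}--\ref{le5} directly on $X$. Here one uses that a locally connected curve has a natural ``$0$-skeleton'' --- a carefully chosen finite set (or finite graph of branch points and short arcs) whose complement is a disjoint union of open arcs of small diameter --- and one builds a self-map $g$ of $X$, $\epsilon$-close to $\id$, collapsing each small open arc toward this skeleton, exactly as $|g|$ collapses stars onto lower skeleta in Lemma~\ref{le4}. Applying Lemma~\ref{le3} with $n=1$, $U_0$ a small neighborhood of the skeleton and $U_1=X$, one gets $CR(g)\subset (X\setminus U_0)\cup K_0$ with every component of small diameter, so $d_1(CR(g))<\epsilon$; density follows as in Theorem~\ref{t:poly}, and Lemma~\ref{le1} upgrades this to a dense $G_\delta$.

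The main obstacle, in either approach, is the construction with \emph{small displacement}: it is easy to retract a locally connected curve onto \emph{some} finite graph, but controlling $\hat d(r_\epsilon,\id)$ (equivalently, controlling the diameters of the fibers of the collapse in the intrinsic proof) requires the full strength of local connectedness --- precisely the fact that points near the subgraph can be joined to it by \emph{short} connected sets. I expect the bookkeeping to reduce to: (i) a uniform modulus of local connectedness $\delta(\epsilon)$ on $X$, and (ii) a nerve-realization lemma that turns a cover into a graph sitting inside the union of its members. Both are standard for $LC^{1}$-curves, so the write-up should be short once the right classical citation (finite-graph approximation of locally connected curves, plus \cite{Y} for graphs being in 0-$CR$) is pinned down.
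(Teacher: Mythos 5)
Your overall route is the same as the paper's second proof: $\epsilon$-retractions of $X$ onto finite subgraphs, graphs in 0-$CR$ by Theorem~\ref{t:poly} (or \cite{Y}), and then Theorem~\ref{t:approx}. But the one substantive step --- which you yourself identify as ``essentially the only'' one --- is asserted rather than proved, and the justifications you offer for it do not work. General ANR/$LC^{n-1}$ machinery (nerves, Lemma~\ref{le0}) gives arbitrarily fine \emph{maps} onto polyhedra, not \emph{retractions onto polyhedral subspaces}; there is no ``known fact'' that $LC^{1}$ curves, or ANR-compacta in general, admit small retractions onto polyhedra sitting inside them. The classical Mazurkiewicz result \cite{Maz} you are implicitly leaning on gives only arbitrarily small mappings of a locally connected curve onto subgraphs, and small surjections are not enough here: Theorem~\ref{t:approx} needs genuine retractions because Lemma~\ref{le2} (the identity $CR(gr)=CR(g)$) fails for a general small map $p:X\to G$. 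The statement you need is precisely Ma\'{n}ka's theorem (Theorem~\ref{t:curve retr}, \cite{Ma}), a nontrivial and recent result; the paper supplies its own proof using the Bing--Moise order-two brick partitions with zero-dimensional boundaries \cite{MOT}, arcwise accessibility of the brick boundaries, trees $T_i$ built from arcs inside each brick, and AR-extensions of the partial retractions that collapse $\bd B_i\cap\bd B_j$ to single points. Your ``nerve-realization'' sketch names the graph $G$ but never constructs the retraction $X\to G$, which is exactly where the work lies.

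Your sketched intrinsic second argument also rests on a false structural claim: a locally connected curve does \emph{not} in general decompose as a finite set (or finite graph) whose complement is a finite disjoint union of small open arcs --- that is true for graphs but fails for, say, the Menger curve or any curve with a dense set of branch points. The paper's actual first proof avoids any skeleton: it takes a finite open cover $\mathcal V$ of small mesh whose members have zero-dimensional boundaries (possible since $\dim X=1$), covers the compact zero-dimensional set $B=\bigcup_{V\in\mathcal V}\bd V$ by finitely many pairwise disjoint open sets $U_i$, redefines $f$ to be constant (equal to a nearby marked point $z_{m(i)}$) on small closed neighborhoods $\overline{W_i}$ of $B\cap U_i$, extends over $U_i\setminus \overline{W_i}$ by the $LC^{0}$ extension Lemma~\ref{le0}, and then uses Proposition~\ref{proBF} to see that $CR(g)$ misses each $W_i\setminus\{z_i\}$, so every component of $CR(g)$ has diameter $<\epsilon$. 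To repair your write-up along either line, you must either cite \cite{Ma} explicitly (not ``classical results'') or reproduce a construction of the small retraction; as it stands the proposal has a genuine gap at its central step.
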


The first proof (inspired by an argument presented in~\cite{Y}) is based on the  well known fact recalled below.
\begin{lemma}[{\cite[p. 80]{Bor}}] \label{le0}
If a compact space $X$ is an ANR-space (or is $n$-dimensional $LC^{n-1}$-space for some  $n \in \mathbb{N}$), then for each $\epsilon >0$ there is $\delta>0$ such that for each continuous map  $\varphi :A\rightarrow X$ from a closed subset $A$ of a compact space  $Z$ (with $\dim (Z \setminus A) \leq n$) satisfying $\diam (\varphi (A))<\delta$, there is a continuous extension  $\psi:Z \rightarrow X$ of  $\varphi$ such that $\diam(\psi (Z)) < \epsilon$.
\end{lemma}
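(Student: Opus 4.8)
The plan is to treat the two hypotheses separately, since they call for different tools, and in each case to reduce the ``small maps extend to small maps'' assertion to a uniform local-connectivity property supplied by compactness. I note at the outset that for the ANR case the dimension restriction $\dim(Z\setminus A)\le n$ is not needed at all, whereas for the $LC^{n-1}$ case it is essential.

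For the ANR case I would embed the compactum $X$ isometrically in a normed linear space $L$ (e.g.\ via the Kuratowski embedding into $C(X)$) and use that, being a compact ANR, $X$ is a retract of some open neighborhood $W\supset X$ in $L$; fix such a retraction $\rho\colon W\to X$. Since $\rho$ is uniformly continuous on a compact neighborhood of $X$, there is, for the given $\epsilon$, a number $\eta>0$ with $\overline{N_\eta(X)}\subset W$ and with $\|y-y'\|\le 2\eta$ implying $\|\rho(y)-\rho(y')\|<\epsilon$ on that neighborhood. Put $\delta=\eta$. Given $\varphi\colon A\to X$ with $\diam\varphi(A)<\delta$, pick $p\in\varphi(A)$; the image lies in the convex ball $\overline{B(p,\delta)}\subset L$, so by the Dugundji extension theorem $\varphi$ extends to a map $\tilde\varphi\colon Z\to\overline{B(p,\delta)}$. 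Then $\tilde\varphi(Z)\subset\overline{N_\eta(X)}\subset W$, so $\psi:=\rho\circ\tilde\varphi\colon Z\to X$ is defined, agrees with $\varphi$ on $A$ (where $\rho$ is the identity), and satisfies $\diam\psi(Z)<\epsilon$ because $\diam\tilde\varphi(Z)\le 2\delta=2\eta$.

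For the $LC^{n-1}$ case the argument is an inductive extension over skeleta. First I would record the uniform form of local $(n-1)$-connectivity that compactness supplies: for each $\epsilon'>0$ there is $\delta'>0$ so that for every $k\le n-1$ any map $S^k\to X$ whose image has diameter $<\delta'$ extends to a map $B^{k+1}\to X$ whose image has diameter $<\epsilon'$ and stays within $\epsilon'$ of the image of $S^k$. Iterating this $n+1$ times produces moduli $\delta=\eta_0<\eta_1<\dots<\eta_n$ so that, starting from $0$-cells whose values lie in a $\delta$-small set, one may fill $1$-cells inside $\eta_1$-small sets, then $2$-cells inside $\eta_2$-small sets, and so on, the accumulated spread being kept below $\epsilon$. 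Next, since $Z\setminus A$ is an open set of dimension $\le n$, I would take a locally finite open cover of it of order $\le n+1$ and of mesh tending to $0$ as one approaches $A$, together with a subordinate partition of unity giving a canonical map $\kappa\colon Z\setminus A\to|N|$ into the nerve $N$, a complex of dimension $\le n$. Assigning to each vertex of $N$ the $\varphi$-value of a nearby point of $A$ and extending over the skeleta of $N$ by the filling property above (legitimate because a $k$-cell is attached along a $(k-1)$-sphere already mapped into a small set, and $k\le n$), I obtain a map $|N|\to X$; composing with $\kappa$ and gluing with $\varphi$ on $A$ defines $\psi$ on all of $Z$.

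The main obstacle is the final bookkeeping: verifying that $\psi$ is continuous across $A$ and that the \emph{whole} image $\psi(Z)$, rather than merely each filled cell, has diameter $<\epsilon$. Continuity at a point $a\in A$ is where the requirement that the cover refine towards $A$ is used, since it forces the cells of $N$ near $a$ to carry vertex values close to $\varphi(a)$, and hence $\psi$ to be close to $\varphi(a)$ near $a$. For the global diameter bound one must use that every vertex value already lies in the single $\delta$-small set $\varphi(A)$, so that the inductive filling never drifts away from $\varphi(A)$ by more than the controlled cumulative spread $<\epsilon$; this is exactly the point where the ``stays within $\epsilon'$ of the image of $S^k$'' clause in the uniform connectivity property is indispensable. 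These continuity-and-control estimates, which are the heart of Borsuk's original argument, become routine once the moduli $\eta_k$ and the refining cover have been set up as above.
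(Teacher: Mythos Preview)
The paper does not supply a proof of this lemma at all: it is quoted verbatim as a known result from Borsuk's \emph{Theory of Retracts} (p.~80) and then used as a black box in the first proof of Theorem~\ref{t:LCcurves} and in Theorem~\ref{Lelekdensity}. So there is no ``paper's own proof'' to compare against.

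On its own merits your reconstruction is the standard one and is correct in outline. The ANR case via Kuratowski/Dugundji embedding and a neighborhood retraction is clean and complete as written. In the $LC^{n-1}$ case your nerve-and-skeleta argument is the classical route; the one imprecision is the phrase ``the $\varphi$-value of a \emph{nearby} point of $A$'' for a vertex of $N$, since cover elements of $Z\setminus A$ lying far from $A$ have no canonically nearby point of $A$. This is harmless here: because $\diam\varphi(A)<\delta$, \emph{any} choice of $a_\alpha\in A$ gives vertex values all lying in a single $\delta$-ball, which is exactly what the iterated filling moduli $\eta_0<\eta_1<\dots<\eta_n$ need to bound the diameter of each filled simplex---and hence of the whole image---by $\epsilon$. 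The choice of $a_\alpha$ only matters for cover elements close to $A$, where ``nearby'' is well defined and is precisely what drives the continuity of $\psi$ across $A$, as you note in your final paragraph. With that clarification the argument goes through.
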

\begin{proof}[Proof 1 of Theorem~\ref{t:LCcurves}]
 Suppose $X$ is a locally connected curve,  $f \in C(X,X)$ and $\epsilon >0$. Since $X\in LC^0$, we can apply Lemma~\ref{le0} to choose $\delta >0$  for $\epsilon / 4$.
Choose also $0<\xi<\delta /2 <  \epsilon /4$  so that if $d(x,y)<\xi$, then  $d(f(x), f(y))<\delta / 2$.

We are going to construct a map $g \in C(X,X)$ such that $\hat{d}(f, g) < \epsilon$ and each component of  $CR(g)$ has diameter less than $\epsilon$. Then, by Lemma~\ref{le1}, the proof will be complete.

Since $X$ is compact, one dimensional, there exists a finite open cover
 $\mathcal{V}$ of $\mesh < \xi$ such that  $\dim (\bd V) = 0$ for $V\in \mathcal{V}$.
The subspace  $B = \bigcup_{V \in \mathcal{V}} \bd(V)$ is zero-dimensional compact, so there is a finite cover $\mathcal{U} =\{U_1, \dots , U_p  \}$ of $B$ such that $\mathcal{U}$ refines  $\mathcal{V}$, the sets $U_i$  are pairwise disjoint open subsets of $X$ and  $\bd (U_i) \cap B = \emptyset$ for  $i = 1, \dots , p$.
 For each $U_i$ choose a pair $(z_i, W_i)$ such that $W_i$ is an open subset of $X$ and
  $$z_i \in W_i \cap B = U_i \cap B \subset W_i \subset \overline{W_i} \subset U_i.$$
The set $\{ z_i\}_{i=1}^p$ is $\xi$-dense in  $X$. So, for any  $i \in \{1, \dots , p\}$, we can choose an index $m(i) \in \{1, \dots , p\}$ such that $d(f(z_i), z_{m(i)})< \xi$.
Let
$$g(x) = \left\{ \begin{array}{ll}
         f(x), & \mbox{gdy $x \in X\setminus ({\bigcup \mathcal{U}})$};\\
			z_{m(i)}, & \mbox{gdy $x \in \overline{W_i}$}, \quad  i=1, \dots ,p. \end{array} \right. $$
Since $\diam (f(U_i))< \delta / 2$ for $ i=1, \dots ,p$, we have
$$\diam(g(\bd(U_i)\cup \overline{W_i}))< \delta /2 + \delta /2< \delta.$$
By Lemma~\ref{le0}, $g$   extends over each  $U_i \setminus \overline{W_i}$ so that
$$\hat{d}(f, g) < \delta + \xi + \epsilon /4 < \epsilon.$$
It follows from Proposition~\ref{proBF} that
$$CR(g) \cap (W_i\setminus \{ z_i \}) =  \emptyset \quad\text{for}\quad i=1, \dots ,p.$$
Thus no component of $CR(g)$ has diameter greater than $\epsilon$, since otherwise it would intersect $\bd(V)$ for some $V \in \mathcal{V}$, so  it would intersect some $W_i\setminus \{ z_i \}$.
\end{proof}

\

A second  proof is based on the following interesting theorem which first appeared in~\cite{Ma}. For the reader's convenience, we  provide here its short, independent proof. The theorem strengthens an old fact due to S. Mazurkiewicz~\cite{Maz} that any locally connected curve $X$ admits arbitrarily small mappings onto graphs contained in $X$. By a \emph{graph} we mean a space homeomorphic to a connected 1-dimensional polyhedron.

\begin{theorem}\label{t:curve retr}
If $X$ is a locally connected curve, then for each $\epsilon >0$ there is a graph $G_\epsilon \subset X$ and a retraction $r_\epsilon : X \rightarrow G_\epsilon$ such that $\hat{d}(r_\epsilon ,id ) < \epsilon$.
 \end{theorem}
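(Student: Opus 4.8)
The plan is to build the graph $G_\epsilon$ and the retraction $r_\epsilon$ by hand, using the classical structure theory of locally connected curves (i.e., Whyburn's cyclic element theory and the fact that in a locally connected continuum every point has arbitrarily small connected open neighbourhoods with finite boundary). First I would fix $\epsilon>0$ and use local connectedness together with one-dimensionality to produce a finite open cover $\mathcal V=\{V_1,\dots,V_q\}$ of $X$ by connected open sets of diameter $<\epsilon/3$ whose boundaries are \emph{finite} sets (this is where curves are special: a locally connected continuum that is $1$-dimensional has a basis of connected open sets with finite boundary, by Menger--Urysohn / the $n$-Beinsatz for $n=1$). Let $F=\bigcup_i \bd V_i$, a finite set. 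The idea is that $G_\epsilon$ will be a finite graph that "spans" all the points of $F$ inside $X$ and onto which $X$ retracts with small displacement.

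Next I would construct $G_\epsilon$. Since $X$ is a locally connected curve it is arcwise connected and in fact any two points lie in a common arc; more usefully, one can choose, for each pair of points of $F$ lying in the closure of a common $V_i$, an arc inside a slightly enlarged connected set of diameter $<\epsilon/2$ joining them, and take $G_\epsilon$ to be a finite union of such arcs, arranged (by the standard trick of replacing arcs by sub-arcs and taking a spanning structure) so that $G_\epsilon$ is a finite graph containing $F$, with $G_\epsilon\cap V_i$ connected and of diameter $<\epsilon$ for each $i$, and so that $G_\epsilon$ is a \emph{retract} of $X$. For the retraction I would proceed cover-set by cover-set: on $X\setminus\bigcup\mathcal V$ there is nothing to do once we arrange $G_\epsilon\supset X\setminus\bigcup\mathcal V$ is false — rather, the clean way is to define $r_\epsilon$ to be the identity on $G_\epsilon$ and, on each component $Q$ of $X\setminus G_\epsilon$, to map $Q$ into the part of $G_\epsilon$ meeting $\overline Q$; since $\bd Q\subset F\subset G_\epsilon$ is finite and $\diam Q$ is controlled by the cover, a connected subset of $G_\epsilon$ of small diameter contains $\bd Q$, and because $G_\epsilon$ is a graph (an ANR, even an $AR$ locally) the map $\bd Q\to G_\epsilon$ extends continuously over $\overline Q$ with image of diameter $<\epsilon$; here Lemma~\ref{le0} applied to the ANR $G_\epsilon$ does exactly this, with $Z=\overline Q$, $A=\bd Q$, and the target controlled so the extension has small image. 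Patching these extensions over the (at most countably many) components of $X\setminus G_\epsilon$, using that the components have diameters tending to $0$, yields a continuous map $r_\epsilon\colon X\to G_\epsilon$ with $r_\epsilon|G_\epsilon=\id$ and $\hat d(r_\epsilon,\id)<\epsilon$.

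The main obstacle, and the step that needs the most care, is arranging simultaneously that $G_\epsilon$ is a \emph{genuine finite graph}, that it \emph{contains the finite boundary set $F$} (so that every component of $X\setminus G_\epsilon$ has its frontier inside $G_\epsilon$ and inside one small connected piece), and that $X$ actually \emph{retracts} onto it. The cleanest route is: first thicken the cover to get $F$ finite; then invoke the theorem of Mazurkiewicz that $X$ admits an $\epsilon$-map $\phi$ onto a graph $G'\subset X$ — no, better, build $G_\epsilon$ directly as a finite union of arcs in $X$ that (i) contains $F$, (ii) is connected, (iii) meets each $V_i$ in a set of small diameter, and (iv) separates $X$ in the way the cover does — and then verify retractibility locally via Lemma~\ref{le0}. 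Because a finite graph is a compact ANR and the components of the complement have small, shrinking diameter, the extension-and-patching argument converges, and the diameter bookkeeping ($\epsilon/3$ for the cover, $\epsilon/2$ for the joining arcs, the extension adding at most another comparable amount) gives $\hat d(r_\epsilon,\id)<\epsilon$. One should double-check that the resulting $r_\epsilon$ is well-defined on the common boundary points of adjacent complementary components — it is, since those points lie in $G_\epsilon$ where $r_\epsilon$ is the identity — and that continuity at points of $G_\epsilon$ accumulated by infinitely many complementary components holds, which again follows from $\mesh$-control.

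Finally, combining Theorem~\ref{t:curve retr} with the approximation Theorem~\ref{t:approx} gives an immediate alternative proof of Theorem~\ref{t:LCcurves}: each $G_\epsilon$ is a (finite) graph, hence a polyhedron, hence in $0\text{-}CR$ by Theorem~\ref{t:poly}, and $\hat d(r_\epsilon,\id)<\epsilon$, so $X\in 0\text{-}CR$.
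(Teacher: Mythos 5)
Your proof founders at its very first step: the claim that a one-dimensional locally connected continuum has a basis (or even one small cover) of connected open sets with \emph{finite} boundary is false. One-dimensionality only yields open sets with \emph{zero-dimensional} boundaries, which in general are Cantor sets; finiteness of boundaries characterizes the much smaller class of regular curves. Concretely, in the Menger universal curve (or the Sierpi\'nski carpet) no such sets exist: if $U$ is a connected open set with finite boundary $F$ and $X\setminus\overline U\neq\emptyset$, pick $p\in F\cap\overline{X\setminus\overline U}$ and a small connected open $V\ni p$ with $V\cap F=\{p\}$; then $V\setminus\{p\}=(V\cap U)\sqcup(V\setminus\overline U)$ is disconnected, so $p$ is a local cut point --- but the Menger curve and the carpet have no local cut points. (The $n$-Beinsatz is about orders of points and arcs emanating from them, not about finite boundaries.) Since your finite set $F$, the graph $G_\epsilon\supset F$, and the bookkeeping of complementary components all rest on this premise, the construction as written only covers regular curves (graphs, local dendrites, etc.), not general locally connected curves, which are exactly the hard case of the theorem.

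This is precisely why the paper's proof takes a different route: it invokes the Bing--Moise brick partition theorem (in the Mayer--Oversteegen--Tymchatyn form) to get a finite closed cover $\{B_1,\dots,B_n\}$ of order two with small bricks and \emph{zero-dimensional} boundaries, builds a tree $T_i$ inside each $\inte B_i$ reaching chosen points $b(i,j)\in B_i\cap B_j$, and defines the retraction brickwise by collapsing the whole (possibly Cantor) set $\bd B_i\cap\bd B_j$ to the single point $b(i,j)$ and then extending over the closed brick $B_i$ using that $T_i$ is an AR; since there are only finitely many closed bricks and the partial maps agree on overlaps, continuity of the glued retraction is automatic. By contrast, even granting your finite-boundary premise, your patching over the (possibly infinitely many) components of $X\setminus G_\epsilon$ is not justified: components of small but not shrinking diameter can accumulate at a point $p\in G_\epsilon$, and an extension whose image is merely of diameter $<\epsilon$ on each component does not force $r_\epsilon(x_n)\to p$ for $x_n\to p$; one would need a uniform modulus (small boundary image implies small extension image) built into the construction, which you do not arrange. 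So both the key structural input and the continuity of the glued map need to be repaired, and the repair is essentially the paper's brick-partition argument.
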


 \begin{proof}
We  use a version of the Bing-Moise's Brick Partition Theorem  for locally connected curves as formulated in~\cite[Theorem 25]{MOT}:
\begin{quote}
\emph{Each locally connected curve has a decreasing sequence of order-two brick partitions whose elements have zero-dimensional boundaries.}
\end{quote}
This means that for each $\epsilon >0$ there is a closed cover $\{B_1, B_2,  \dots ,B_n\}$ of $X$ of order two (i.~e., at most two elements of the cover intersect) such that
$$\diam B_i < \epsilon,\quad \dim (\bd B_i ) =0,\quad \inte B_i \cap \inte B_j = \emptyset \quad\text{if}\quad i \neq j$$  and $\inte B_i $ is connected and uniformly locally connected for each  $i$ (in particular, $\inte B_i$ is arcwise connected).

Points of  $\bd B_i $ are arcwise accessible  from $\inte{B_i}$ (see~\cite[Theorem 7, p. 266]{Kur}).
 Call two distinct indexes   $i, j \in \{1,2, \dots , n   \}$ dependent if $B_i \cap B_j \neq \emptyset$. For each $i\le n$ and any pair  $(i, j)$  of dependent indexes choose
  \begin{itemize}
  \item a point $a_i\in \inte B_i$,
  \item a point $b(i,j)=b(j,i)\in B_i\cap B_j$,
    \item an arc  $\alpha(i,j)\subset \inte B_i\cup \{b(i,j)\}$ from   $a_i$ to   $b(i,j)$.
  \end{itemize}
 Then, for each $i \in \{1, 2, \dots, n\}$, choose a tree
   $$T_i \subset \bigcup \{ \alpha(i, j): \text{$i, j$ are dependent}\}$$
   with ends $b(i,j)$ for all  $j$'s dependent on   $i$ (Figure~\ref{Fig3}).

\begin{figure}[h]
\centering
\scalebox{1}[.6]{\includegraphics[width=100mm,height=150mm]{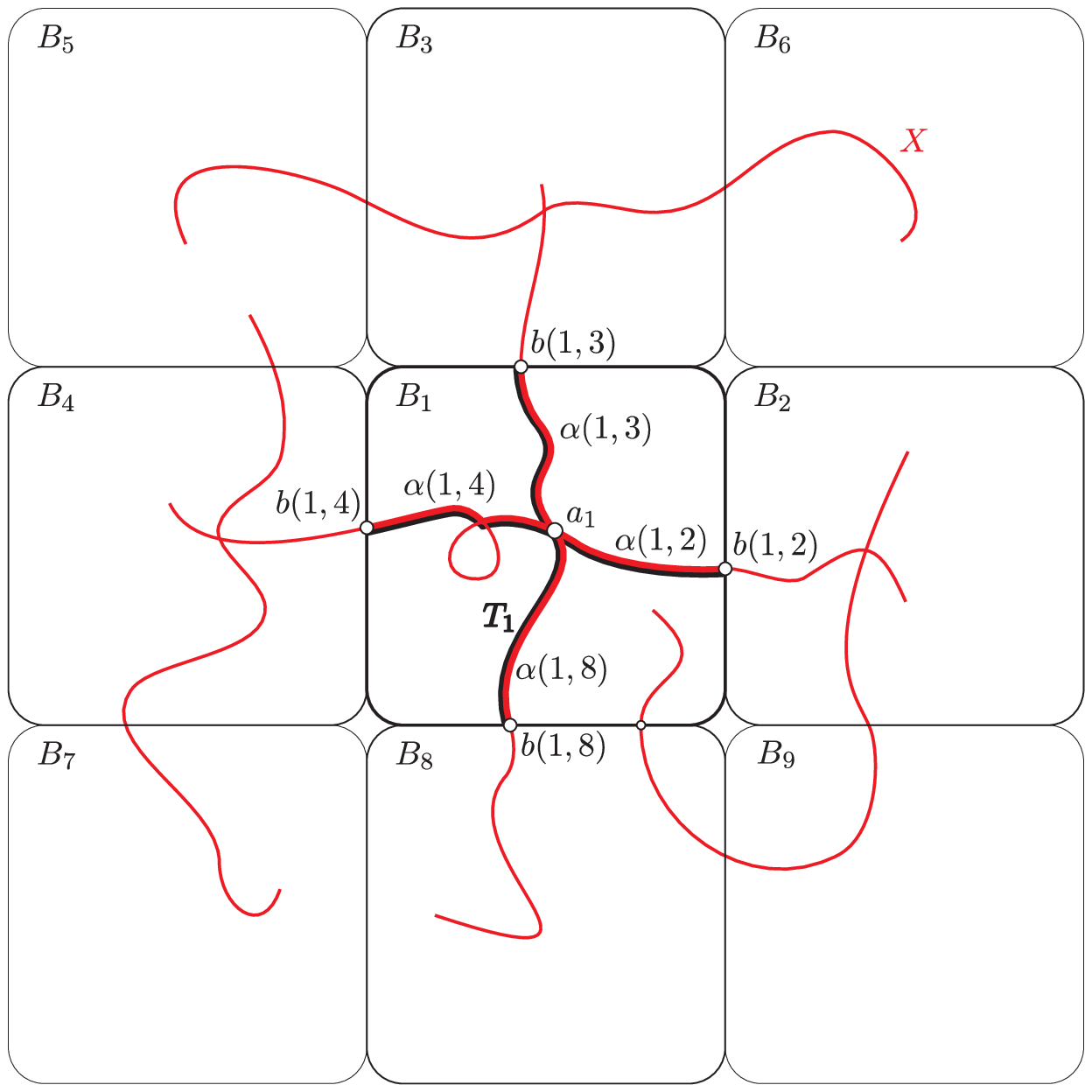}}
\mbox{\null}\hfill%
\begin{minipage}{12cm}
\caption{Brick partition $\{B_1,B_2,\dots,B_9\}$ of $X$ with points $a_1$, $b(1,j)$, arcs $\alpha(1,j)$ and a tree $T_1$.}\label{Fig3}
\end{minipage}
\hfill\mbox{\null}
\end{figure}

\

Now define a partial retraction  $r_i: T_i \cup \bd B_i \to T_i$ by putting $r_i | T_i =id$ and sending  $\bd B_i \cap \bd B_j$ onto $\{b(i,j)\}$, for all pairs   $(i, j)$  of dependent $i,j$.
   Let $\overline{r_i}$ be a continuous extension of $r_i$ over $B_i$ (notice that $T_i$ is an $AR$). Then the union of all  $\overline{r_i}$'s,  $i = 1, 2, \dots , n$ is the required retraction  $X$ onto the graph $G = \bigcup_{i=1}^n T_i$.
   \end{proof}

\begin{proof}[Proof 2 of Theorem~\ref{t:LCcurves}]
 Graphs belong to class 0-$CR$, as they are polyhedra. So, it remains to apply Theorems~\ref{t:poly}, \ref{t:curve retr} and~\ref{t:approx}.
 \end{proof}

\

\section{More examples of  0-$CR$-continua}\label{sec:examples}
The following general theorem is a rich source of such examples.

\begin{theorem}\label{t:products}
If $\mathcal{X}$ is a class of compact spaces that admit $\epsilon$-retrac\-tions onto   polyhedra,  for each  $\epsilon > 0$,  then
  \begin{enumerate}
        \item the product (finite or infinite) of spaces from  $\mathcal{X}$ belongs to 0-$CR$,
        \item the cone and suspension over  $X \in \mathcal{X}$ belongs to  0-$CR$.
  \end{enumerate}
\end{theorem}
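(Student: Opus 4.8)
The plan is to reduce everything to the approximation machinery of Theorem~\ref{t:approx} together with the fact (Theorem~\ref{t:poly}) that polyhedra lie in 0-$CR$. For part (1), consider first a finite product $X = X_1 \times \dots \times X_k$ with each $X_j \in \mathcal{X}$. Fix $\epsilon > 0$. For each coordinate choose an $(\epsilon/k)$-retraction $r_j : X_j \to P_j$ onto a polyhedron $P_j$. Then the product map $r = r_1 \times \dots \times r_k : X \to P_1 \times \dots \times P_k$ is a retraction onto the product polyhedron $P_1\times\dots\times P_k$ (a finite product of polyhedra is a polyhedron), and with respect to, say, the max metric on the product one gets $\hat d(r,\mathrm{id}) < \epsilon$. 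Since $P_1\times\dots\times P_k$ is a polyhedron, it belongs to 0-$CR$ by Theorem~\ref{t:poly}, so the hypothesis of Theorem~\ref{t:approx} is verified and $X \in$ 0-$CR$. So finite products already reduce to the approximation theorem; the only mild point is that $\mathcal{X}$ itself need not be closed under finite products, but that does not matter because Theorem~\ref{t:approx} only needs small retractions onto \emph{some} member of 0-$CR$, and polyhedra suffice.

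For the infinite product $X = \prod_{j=1}^{\infty} X_j$, equip $X$ with a metric of the form $d(x,y) = \sum_j 2^{-j} d_j(x_j,y_j)/(1+d_j(x_j,y_j))$ (diameters bounded by $1$, say). Given $\epsilon > 0$, pick $N$ so large that $\sum_{j > N} 2^{-j} < \epsilon/2$, and for $j \le N$ choose polyhedral $(\epsilon/2)$-retractions $r_j : X_j \to P_j$; for $j > N$ pick any polyhedral retraction $r_j : X_j \to P_j$ at all (here I would just take $P_j$ a single point in some $P_j\subset X_j$, or any $\epsilon$-retraction — the choice is irrelevant since those coordinates contribute less than $\epsilon/2$ to $\hat d$). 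Then $r = \prod_j r_j : X \to \prod_j P_j$ is a retraction with $\hat d(r,\mathrm{id}) < \epsilon$, and $\prod_j P_j$ is a countable product of polyhedra. The subtlety is that a countable product of polyhedra is in general \emph{not} a polyhedron, so Theorem~\ref{t:poly} does not directly apply to the target. The fix is to note that $\prod_j P_j$ itself again admits $\epsilon$-retractions onto finite subproducts $P_1\times\dots\times P_N$ (projecting the tail coordinates to points), which \emph{are} polyhedra; hence by the finite-product case (or directly by Theorem~\ref{t:approx}) $\prod_j P_j \in$ 0-$CR$, and then one more application of Theorem~\ref{t:approx} to the retraction $r$ gives $X \in$ 0-$CR$. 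Equivalently, and more cleanly: the class of compacta admitting arbitrarily small retractions onto polyhedra is itself closed under countable products by the argument just given, so Theorem~\ref{t:approx} applies in one stroke.

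For part (2), recall that the cone $\mathrm{Cone}(X) = (X \times [0,1])/(X\times\{1\})$ and the suspension $\Sigma X = (X\times[0,1])/(X\times\{0\},\, X\times\{1\})$ over $X \in \mathcal{X}$. If $r : X \to P$ is an $\epsilon$-retraction onto a polyhedron $P$, then $r$ induces maps $\mathrm{Cone}(r) : \mathrm{Cone}(X) \to \mathrm{Cone}(P)$ and $\Sigma r : \Sigma X \to \Sigma P$ by acting on the $X$-coordinate and fixing the $[0,1]$-coordinate; these are retractions onto $\mathrm{Cone}(P)$, resp.\ $\Sigma P$, which are polyhedra (a cone or suspension over a finite polyhedron is again a finite polyhedron — one triangulates by coning off / joining with the added vertices). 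With an appropriate metric on the cone/suspension (e.g.\ the quotient metric, or the metric induced from an embedding into $X\times[0,1]$-type space), the estimate $\hat d(\mathrm{Cone}(r),\mathrm{id}) \le \epsilon$ holds because the retraction moves points only in the $X$-direction by at most $\hat d(r,\mathrm{id})$. So again Theorem~\ref{t:poly} and Theorem~\ref{t:approx} finish the job. The main obstacle in the whole proof is really the bookkeeping in part (1) for infinite products: making precise that the target $\prod_j P_j$, while not itself a polyhedron, still satisfies the hypotheses of Theorem~\ref{t:approx} (equivalently, closing the ``small retractions onto polyhedra'' property under countable products and then quoting Theorem~\ref{t:approx} once). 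Everything else is routine verification that product/cone/suspension constructions preserve being a polyhedron and do not increase the retraction displacement beyond control.
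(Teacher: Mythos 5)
Your proposal is correct and follows essentially the same route as the paper, which only sketches the argument: coordinatewise (resp.\ induced) small retractions onto polyhedra, the facts that finite products, cones and suspensions of polyhedra are polyhedra and that infinite products admit small retractions onto finite subproducts, and then Theorems~\ref{t:approx} and~\ref{t:poly}. Your two-step treatment of the infinite product is fine, though it can be done in one application of Theorem~\ref{t:approx} by using constant retractions (to points) on the tail coordinates, which retracts $\prod_j X_j$ directly onto a finite product of polyhedra.
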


The theorem easily follows  from Theorems~\ref{t:approx},~\ref{t:poly} and the following well known facts:
\begin{enumerate}
\item
the finite product of polyhedra, the cone and suspension over a polyhedron are polyhedra,
\item
 the infinite product of polyhedra admits $\epsilon$-retractions onto  finite products  of polyhedra for every $\epsilon > 0$.
 \end{enumerate}

 In particular,  we have the corollary.
 \begin{corollary}\label{c:products} The following spaces belong to class  0-$CR$:
 \begin{enumerate}
\item
 products  (finite or infinite) of locally connected curves, cones and suspensions over the products,
\item
compact Hilbert cube manifolds (since they are homeomorphic to products of a polyhedron and the Hilbert cube)
\item
Menger continua $M^n_k$ (see their description in~\cite{Eng}), their products, cones and suspensions over the products.
\item
all known dendroids, their products, cones and suspensions over the products (according to R. Cauty~\cite{Cauty}, all dendroids admit arbitrarily small retractions onto trees),
\item
all indecomposable continua  homeomorphic to inverse limits of arcs  with open bonding maps  (so called the simplest indecomposable continua), their products, cones and suspensions over the products.
\end{enumerate}
\end{corollary}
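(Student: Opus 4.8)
The plan is to deduce Corollary~\ref{c:products} directly from Theorem~\ref{t:products} by verifying, in each of the five listed cases, that the space (or the relevant building block) admits, for every $\epsilon>0$, an $\epsilon$-retraction onto a polyhedron; once that is done, part~(1) of Theorem~\ref{t:products} handles the products and part~(2) handles the cones and suspensions, so no further work is needed. Thus each item reduces to a single retraction statement.

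For item~(1), a locally connected curve admits, for each $\epsilon>0$, an $\epsilon$-retraction onto a graph $G_\epsilon\subset X$ by Theorem~\ref{t:curve retr}, and a graph is a (one-dimensional connected) polyhedron; so locally connected curves form a subclass of the class $\mathcal{X}$ in Theorem~\ref{t:products}, and (1) follows. For item~(2), a compact Hilbert cube manifold is homeomorphic to $P\times Q$ where $P$ is a finite polyhedron and $Q=[0,1]^{\omega}$ is the Hilbert cube (Chapman's theorem); since $Q$ is the countable product of the polyhedron $[0,1]$, it admits $\epsilon$-retractions onto finite subproducts $[0,1]^k$, hence $P\times Q$ admits $\epsilon$-retractions onto the polyhedra $P\times[0,1]^k$, and again the space lies in $\mathcal{X}$. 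For item~(3), each Menger continuum $M^n_k$ is, by its standard construction as a nested intersection of regular neighbourhoods of $n$-complexes in $\mathbb{R}^{2n+1}$ (see~\cite{Eng}), the inverse limit of a sequence of compact polyhedra with retractions as bonding maps whose meshes tend to $0$; the projection to the $m$-th coordinate, composed with the identification of that coordinate space with a subpolyhedron sitting inside $M^n_k$, yields $\epsilon$-retractions onto polyhedra for $m$ large, so $M^n_k\in\mathcal{X}$. For item~(4), Cauty's theorem~\cite{Cauty} states that every dendroid admits, for each $\epsilon>0$, a retraction onto a tree within $\epsilon$ of the identity, and a tree is a polyhedron; for item~(5), a simplest indecomposable continuum is an inverse limit of arcs with open bonding maps, and one checks that the natural maps onto the coordinate arcs can be realized as $\epsilon$-retractions onto subarcs (polyhedra) of the continuum — the point being that open surjections between arcs are, up to small perturbation, locally monotone, so their inverse limit retracts nicely onto high-index factors.

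In all five cases, once membership of the base space in $\mathcal{X}$ is established, Theorem~\ref{t:products} gives both the product and the cone/suspension conclusions simultaneously, so the corollary is complete.

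The step I expect to be the genuine obstacle is \emph{not} items (1), (2), (4) — those are immediate once one quotes the cited retraction theorems — but rather making precise, in items (3) and especially (5), that the abstract inverse-limit description produces bona fide $\epsilon$-\emph{retractions} of $X$ onto \emph{subpolyhedra of $X$ itself}, rather than merely $\epsilon$-maps of $X$ onto abstract polyhedra. For the Menger continua this is part of the classical Anderson–Bestvina theory and can be cited; for the simplest indecomposable continua one has to exhibit, for large $m$, an embedded copy of the $m$-th arc factor inside $X$ onto which the coordinate projection restricts to the identity, and argue that the resulting map moves points less than $\epsilon$ because the bonding maps are open and hence have uniformly small fibers after finitely many stages. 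This is the only place where a short geometric argument, rather than a citation, is required; everything else is bookkeeping around Theorems~\ref{t:approx},~\ref{t:poly}, \ref{t:curve retr} and~\ref{t:products}.
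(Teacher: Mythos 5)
Your proposal is correct and follows essentially the same route as the paper: the corollary is obtained by checking, class by class, that each listed space admits $\epsilon$-retractions onto polyhedra (via Theorem~\ref{t:curve retr} for curves, Chapman's theorem for Hilbert cube manifolds, the standard structure of $M^n_k$, Cauty's result for dendroids, and the known small retractions of the simplest indecomposable continua onto arcs) and then applying Theorem~\ref{t:products}, exactly as the paper intends. The paper supplies no further argument beyond these citations, so your added sketches for items (3) and (5) merely fill in what the paper treats as known.
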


\

\section{Perfectness and density of periodic points in generic $CR(f)$}\label{per}

Many properties of a generic  map on  PL-manifolds are presented in~\cite{Hur2},~\cite{AHK}. In this section
   we show that some of them hold for much wider classes of compacta. Specifically, we are going to show that, for a generic $f$,  $CR(f)\simeq\mathcal C$ and the set of periodic points of $f$ of periods greater than any given integer $l\ge 2$ is dense in $CR(f)$ if $f$ is defined    on polyhedra, compact  Hilbert cube manifolds, local dendrites and finite products of them. Recall that a locally connected continuum $X$ is a \emph{local dendrite}, if $X$  has a base of open subsets $B$ such that $\overline{B}$ is a dendrite.   In general, the key properties of such spaces  responsible for detecting and multiplying periodic points are the local periodic point property and  mapping extension property.

\begin{definition}
A space $X$ has the \emph{weak local periodic point property} ($X\in wLPPP$) if there is an open base  $\mathcal{B}$ such that, for each $B \in \mathcal{B}$ and each continuous map  $f:X\to X$, whenever $f(\overline{B})\subset B$ then $f$ has a periodic point in $B$.

 Similarly, we can define the \emph{weak local fixed point property} ($wLFPP$).
\hfill$\square$
\end{definition}

Clearly,  $wLFPP\subset wLPPP$. The following easy examples show that the inclusion is proper.
\begin{example}\label{e:LPPP}\hfill

1. Let $X = \{0,1,\frac{1}{2},\dots\}$  be the subspace of the Euclidean line and $\mathcal B$ be an open base in $X$. It is easy to see that $X\in wLPPP$ and  for any set $U \in \mathcal B$  containing $0$, one easily finds a map $f:X\to X$ with $f( \overline{U} )\subset U$ without fixed points in $U$.

2. Consider the product  $Y=  X \times [0,1]$. The products $U\times V$, where $U\in \mathcal B$ and $V$ is an interval open in $[0,1]$, form a base $\mathbb{B}$ for  $wLPPP$ (use the fixed point property of $\overline V$ and $wLPPP$ of $X$).  To see that $Y\notin wLFPP$ take arbitrary open neighborhood $W$ of point $(0,0)$ in $Y$. There exist $U\in \mathcal B$ and an interval $V=[0,a)$ such that $\overline{U}\times \overline{V}\subset W$. Take a retraction $r:Y\to \overline{U}\times \overline{V}$. Then $(f\times id_{\overline V})r:Y\to W$ maps $\overline{W}$ into $W$ and  has no fixed point in $W$.

3. In a similar way we show that the cone $Z=Y/_{(X\times\{1\})}$ has $wLPPP$ but $Z\notin wLFPP$.
\hfill $\square$
\end{example}

\

\begin{remark}
Relationships between  properties $wLFPP$, $wLPPP$ and their stronger versions  $LFPP$ and $LPPP$ for continua are studied in~\cite{IK}.
\end{remark}

\

We will use the following well-known  mapping extension theorems (see~\cite[Theorem 3.1, p. 103]{Bor} and~\cite[Proposition 2.1.4]{Best}).
\begin{lemma}\label{l:ext}
Let a compact space $X$ be an ANR or an $n$-dimensional $LC^{n-1}$-space. Then for each $\epsilon>0$ there is $\eta>0$ such that, for each closed subset $A$ of $X$ and any two continuous maps
$f,g:A\to X$ such that $\hat{d}(f,g)<\eta$, if $f$ extends to a map $f':X\to X$, then $g$ extends to a map $g':X\to X$ with  $\hat{d}(f',g')<\epsilon$.
\end{lemma}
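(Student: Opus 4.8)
The plan is to reduce the statement to extending a single small ``correction map'' and composing it with a fixed nearby retraction, which automatically provides the uniform constant $\eta$. I treat the ANR case in full and then indicate how the $n$-dimensional $LC^{n-1}$-case follows the same pattern.

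\emph{The ANR case.} Embed $X$ isometrically as a closed subset of a Banach space $E$ (for instance $E=C(X)$ via the Kuratowski embedding). Since $X$ is a compact ANR it is a neighborhood retract of $E$, so there are an open set $U\supset X$ and a retraction $r:U\to X$. Using the continuity of $r$ together with the compactness of $X$, fix $0<\epsilon'<\epsilon$ so small that the $\epsilon'$-neighborhood of $X$ in $E$ is contained in $U$ and that $x_0\in X$, $\|u-x_0\|<\epsilon'$ imply $d(r(u),x_0)<\epsilon$. Put $\eta=\epsilon'$. Now let $A\subset X$ be closed, $f,g:A\to X$ with $\hat d(f,g)<\eta$, and let $f':X\to X$ extend $f$. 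The map $a\mapsto g(a)-f(a)$ carries $A$ into the convex open ball $B(0,\eta)\subset E$, so by Dugundji's extension theorem (see~\cite{Bor}) it extends to a map $\Psi:X\to B(0,\eta)$. Set $h=f'+\Psi:X\to E$; then $\|h(x)-f'(x)\|<\eta=\epsilon'$ and $f'(x)\in X$, hence $h(x)\in U$, and we may define $g'=r\circ h:X\to X$. For $a\in A$ we have $h(a)=f(a)+(g(a)-f(a))=g(a)\in X$, so $g'(a)=r(g(a))=g(a)$; thus $g'$ extends $g$. For an arbitrary $x\in X$, since $f'(x)\in X$ and $\|h(x)-f'(x)\|<\epsilon'$, the choice of $\epsilon'$ gives $d(f'(x),g'(x))=d(f'(x),r(h(x)))<\epsilon$, i.e. $\hat d(f',g')<\epsilon$.

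\emph{The $n$-dimensional $LC^{n-1}$-case.} Here the same scheme applies with one change: $X$ need not be a neighborhood retract of $E$, but it is a compact absolute neighborhood extensor for metric spaces of dimension $\le n$, and — this is the point — $\dim X\le n$, so the domain $X$ itself lies in the admissible class. One therefore replaces the retraction step by the dimension-restricted extension machinery for such spaces, whose quantitative form is exactly Lemma~\ref{le0}: the correction data are of arbitrarily small size on $A$, Lemma~\ref{le0} allows one to propagate a small-size extension over $X$, and the $LC^{n-1}$-property lets the resulting map be moved back into $X$ without displacing any point by more than $\epsilon$, all auxiliary constructions being carried out inside subspaces of dimension $\le n$; alternatively one simply invokes~\cite{Bor,Best}.

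\emph{The main obstacle.} In both cases the only genuine difficulty is \emph{uniformity}: one must produce a single $\eta=\eta(\epsilon)$ valid simultaneously for all closed $A\subset X$ and all pairs $(f,g)$, which is precisely why the argument is routed through one fixed retraction (respectively through the quantitative form of Lemma~\ref{le0}) rather than extending ``by hand'' for each individual $A$. In the $LC^{n-1}$-case there is the further bookkeeping that every auxiliary map must be kept inside a space of dimension $\le n$, which forbids shortcuts such as extending a straight-line homotopy over $A\times I$ and forces the more careful finite-dimensional argument above.
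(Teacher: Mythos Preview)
The paper does not prove this lemma at all; it is quoted as a well-known result with references to \cite[Theorem~3.1, p.~103]{Bor} and \cite[Proposition~2.1.4]{Best}. Your ANR argument---Kuratowski embedding, Dugundji extension of the difference $g-f$ to a small map $\Psi:X\to E$, and post-composition of $f'+\Psi$ with a fixed neighborhood retraction $r:U\to X$---is correct and is one of the standard routes to the result; routing everything through the single retraction $r$ is exactly what makes the choice of $\eta$ uniform in $A$, as you note.

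Your $LC^{n-1}$ paragraph, by contrast, is only a sketch, and the appeal to Lemma~\ref{le0} is misdirected: that lemma extends maps whose \emph{image} has small diameter, whereas here $f'$ is arbitrary and only the pointwise correction $g-f$ is small. The step you actually need---pushing $h=f'+\Psi$ (a map with $n$-dimensional domain, image in an $\eta$-neighborhood of $X$, and $h|A=g$ already landing in $X$) back into $X$ rel~$A$ with displacement $<\epsilon$---is precisely the content of the cited theorems, so when you write ``alternatively one simply invokes~\cite{Bor,Best}'' you are doing exactly what the paper does. If you want the $LC^{n-1}$ case to be self-contained, the usual argument is an inductive extension over the skeleta of a fine nerve of $X$ using the uniform local $(n-1)$-connectedness moduli; Lemma~\ref{le0} by itself does not deliver it.
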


\

Denote
$$Per_{l}(f)=\{x\in Per{f}: \text{the $f$-orbit of $x$ has cardinality  $\ge l$}\},$$
\begin{multline*}
{\mathcal I}_\epsilon=\\
\{f\in C(X,X):card(B_\epsilon(x)\cap CR(f))>1 \mbox{ for each } x\in CR(f)\},
\end{multline*}
\begin{multline*}
\mathcal{I}_{\epsilon,l}=\\
 \{f\in C(X,X):card(B_\epsilon(x)\cap Per_{l}(f))>1 \mbox{ for each } x\in CR(f)\},
 \end{multline*}
where $B_\epsilon(x)$ is the $\epsilon$-ball around $x$.

\

Clearly, we have
$$\bigcap_n {\mathcal I}_{\frac1n} = \{f\in C(X,X):CR(f) \mbox{ has no isolated point}\},$$
and
\begin{multline*}
\bigcap_n \mathcal {I}_{\frac1n,l,} =\\
 \{f\in C(X,X):\text{$Per_{l}(f)$ has no isolated point and is dense in $CR(f)$}\}.
 \end{multline*}

\

\begin{proposition}\label{prop:i}
Let a perfect compact space $X$ be an ANR or an $n$-dimensional $LC^{n-1}$-space. Then:
\begin{enumerate}
\item
the sets
$\inte({\mathcal I}_\epsilon)$ and
$\mathcal {I}_{\epsilon,l}$
are  dense in $C(X,X)$ for each $\epsilon>0$ and $l\ge 2$;
\item
if $X\in wLPPP$, then also
$\inte(\mathcal {I}_{\epsilon,l})$ is dense in $C(X,X)$.
\end{enumerate}
\end{proposition}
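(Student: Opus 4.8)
The plan is to prove the three density assertions simultaneously, by perturbing a given map only near $CR(f)$ and inserting, along well-chosen periodic pseudo-orbits, finitely many \emph{crushing} gadgets (pieces on which the perturbed map is locally constant). Fix $f\in C(X,X)$ and $\delta>0$. First I would cover the compact set $CR(f)$ by finitely many balls $B_{\epsilon/4}(x_1),\dots,B_{\epsilon/4}(x_k)$ with $x_i\in CR(f)$, put $U_0=\bigcup_i B_{\epsilon/4}(x_i)$, and, using upper semicontinuity of $CR$ (Fact~\ref{f1}), fix $\rho\in(0,\delta)$ so that $\hat{d}(f,g)<\rho$ forces $CR(g)\subset U_0$. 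Let $\eta\in(0,\rho)$ be the constant supplied by the extension Lemma~\ref{l:ext} for accuracy $\rho$, and pick $\eta'>0$ with $\eta'\ll\eta$ and small enough that $f$ moves $\eta'$-close points by less than $\eta'$. For each $i$, using $x_i\in CR(f)$ together with the perfectness of $X$, I would build a periodic $\eta'$-pseudo-orbit $x_i=y^{(i)}_0,y^{(i)}_1,\dots,y^{(i)}_{m_i}=x_i$ of $f$ with $m_i\ge l$, containing a point $y^{(i)}_{t_i}$ ($0<t_i<m_i$) with $0<d(y^{(i)}_{t_i},x_i)<\epsilon/4$, and with all the points $y^{(i)}_j$ ($i\le k$, $0\le j<m_i$) pairwise distinct. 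Perfectness is what makes this possible: every ball $B_{\eta'}(f(y))$ is infinite, so the successive pseudo-orbit points can be chosen new, the loop can be padded to length $\ge l$, and it can be routed once more through a fresh point $\eta'$-close to $x_i$ before being closed up at $x_i$.

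Next I would choose pairwise disjoint closed neighbourhoods $\overline{N^{(i)}_j}$ of the $y^{(i)}_j$, so small that $f$ has oscillation $<\eta'$ on each, that $\overline{N^{(i)}_0},\overline{N^{(i)}_{t_i}}\subset B_{\epsilon/4}(x_i)$, and (in the $wLPPP$ case) that each $N^{(i)}_j$ belongs to the prescribed base $\mathcal{B}$; then define $g$ on the closed set $C=\bigcup_{i,j}\overline{N^{(i)}_j}$ to be the constant map $\overline{N^{(i)}_j}\mapsto y^{(i)}_{j+1\bmod m_i}$. Since $\hat{d}(f|_C,g|_C)<2\eta'<\eta$, Lemma~\ref{l:ext} extends $g$ to a map $g\in C(X,X)$ with $\hat{d}(f,g)<\rho<\delta$. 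Along each loop, $g$ now cyclically crushes $\overline{N^{(i)}_j}$ onto $y^{(i)}_{j+1}$; hence $g$ sends the $y^{(i)}_j$ by the cyclic successor map (so each loop is a genuine periodic $g$-orbit with $m_i\ge l$ distinct points), $g^{m_i}(\overline{N^{(i)}_0})=\{x_i\}\subset N^{(i)}_0$, and $g(\overline{N^{(i)}_j})\subset N^{(i)}_{j+1\bmod m_i}$. Each of these last relations expresses that a compact set is carried into an open set, so all of them persist on a whole neighbourhood $\mathcal{N}$ of $g$ in $C(X,X)$.

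It remains to read off the conclusions. As each $x_i$ and each $y^{(i)}_{t_i}$ is a periodic point of $g$ of period $m_i\ge l$ lying in $B_{\epsilon/4}(x_i)$, and $CR(g)\subset U_0$, every $z\in CR(g)$ lies in some $B_{\epsilon/4}(x_i)$ and then $x_i,y^{(i)}_{t_i}$ are two points of $Per_{l}(g)\cap B_\epsilon(z)$; thus $g\in\mathcal{I}_{\epsilon,l}$, which gives the density of $\mathcal{I}_{\epsilon,l}$. For any $g'$ in a neighbourhood $\mathcal{N}$ of $g$ small enough that also $\hat{d}(f,g')<\rho$, each $N^{(i)}:=\bigcup_j N^{(i)}_j$ is a trapping region for $g'$, i.e. $g'(\overline{N^{(i)}})\subset N^{(i)}$; its maximal $g'$-invariant subset $M^{(i)}=\bigcap_{r\ge 0}(g')^r(\overline{N^{(i)}})$ is therefore nonempty and compact, and the (always nonempty) chain recurrent set of $g'|_{M^{(i)}}$ is contained in $CR(g')$, since a closed chain inside $M^{(i)}$ is a closed chain of $g'$. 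Following the loop around and using that $CR(g')$ is forward invariant, one obtains points of $CR(g')$ in $N^{(i)}_0$ and in $N^{(i)}_{t_i}$, hence two points of $CR(g')$ within $\epsilon/2$ of every $z\in CR(g')\subset U_0$; so $\mathcal{N}\subset\mathcal{I}_\epsilon$ and $g\in\inte(\mathcal{I}_\epsilon)$.

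Finally, suppose $X\in wLPPP$. For $g'\in\mathcal{N}$ one has $(g')^{m_i}(\overline{N^{(i)}_0})\subset N^{(i)}_0$ (iterate the relations $g'(\overline{N^{(i)}_j})\subset N^{(i)}_{j+1\bmod m_i}$) with $N^{(i)}_0\in\mathcal{B}$, so $wLPPP$ applied to the map $(g')^{m_i}$ yields a genuine periodic point $p_i\in N^{(i)}_0$ of $g'$; its $g'$-orbit runs through all $m_i\ge l$ pairwise disjoint sets $N^{(i)}_j$, so $p_i\in Per_{l}(g')$, and the orbit point lying in $N^{(i)}_{t_i}$ is a second element of $Per_{l}(g')\cap B_{\epsilon/4}(x_i)$; exactly as above this gives $\mathcal{N}\subset\mathcal{I}_{\epsilon,l}$, that is $g\in\inte(\mathcal{I}_{\epsilon,l})$, completing the last claim. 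The step I expect to be the genuine obstacle is the pseudo-orbit construction of the first paragraph --- producing loops that simultaneously carry at least $l$ distinct points and return a second time $\epsilon/4$-close to $x_i$, with all loops disjoint --- which is precisely where the hypothesis that $X$ be perfect is used; the remainder is bookkeeping with Fact~\ref{f1}, Lemma~\ref{l:ext}, Proposition~\ref{proBF} and the openness of the crushing conditions.
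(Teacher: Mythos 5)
Your argument is correct and is essentially the paper's own proof: an $\epsilon/4$-net of $CR(f)$, periodic pseudo-orbits through its points made pairwise disjoint via perfectness, small disjoint neighbourhoods crushed onto the successor pseudo-orbit points and globalized by Lemma~\ref{l:ext}, upper semi-continuity of $CR$ to confine $CR(h)$ near $CR(f)$, openness of the finitely many inclusions $h(\overline{N_j^{(i)}})\subset N_{j+1}^{(i)}$ for the interior statements, and $wLPPP$ applied to an iterate of $h$ to produce periodic points of period at least $l$ in the trapped basic sets. The differences are cosmetic: the paper secures period $\ge l$ and several nearby trapped points by linking $l$ disjoint copies of each chain into one cycle of length $l(n_i+1)$, whereas you pad a single chain to length $\ge l$ and route it a second time through a fresh point near $x_i$, and you organize the recurrence step through the maximal invariant set of the union trap rather than through $CR(h^{m}|\overline{U})$ per set; also your self-referential requirement that $f$ move $\eta'$-close points by less than $\eta'$ is not always realizable and should simply be replaced by a separate modulus-of-continuity constant (a harmless fix, since all you use is small oscillation of $f$ on the chosen neighbourhoods and stability of the chains under small perturbation of their points).
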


\begin{proof}
Fix $f\in C(X,X)$ and $\epsilon,\delta>0$.
We are going to find $g'\in C(X,X)$ and $\gamma>0$ such that, for each $h\in C(X,X)$ with $\hat{d}(h,g')<\gamma$, we have $\hat{d}(h,f)<\delta$ and $h\in{\mathcal I}_{\epsilon}.$

By the upper semi-continuity of $CR$, there is $0<\delta'<\delta$ such that
$$\text{if }\quad \hat{d}(f,h)<\delta',
\quad\text{then}\quad CR(h)\subset B_{\frac{\epsilon}4} (CR(f)),$$
where $B_\alpha(A)$ denotes the $\alpha$-ball around $A\subset X$.

Choose  $0<\eta< \frac{\epsilon}4$  as in Lemma~\ref{l:ext}, i.~e.
if $g:A=\overline A\to X$ and $\hat{d}(f|A,g)<\eta$, then there is an extension
$g'\in C(X,X)$ of $g$ with $\hat{d}(f,g')<\delta'.$

Take an $\eta$-dense subset $S=\{x_1,\ldots,x_k\}$ of $CR(f)$,
i.~e. $B_\eta( S) \supset CR(f)$.

There is $0<\lambda<\frac{\eta}3$  such that
$$\text{if}\quad  d(x,x')<\lambda \quad\text{then}\quad d(f(x),f(x'))<\eta{/}3.$$

Let $(x_i^j)_{0\le j\le n_i}$ be $\lambda$-chains for $f$, $i=1,\ldots,k$,
i.~e.
$$x_i^0=x_i, \quad f(x_i^j)\in B_{\lambda}(x_i^{j+1})\quad\text{if}\quad j<n_i\quad\text{and}\quad f(x_i^{n_i})\in B_{\lambda}(x_i^{0}).$$

For each $i=1,\ldots,k$, $j=0,1,\ldots,n_i$, $l\ge 2$, $r=0,1,\ldots,l-1$,
choose nonempty open subsets
$U_i^j(r)$ of $B_{\lambda}(x_i^j)$
with mutually disjoint closures $\overline{U_i^j(r)} \subset B_{\lambda}(x_i^j)$. Pick a point $p_i^j(r)$ in each $U_i^j(r)$ and let $$p_i^{n_i+1}(r)=p_i^0(r+1\!\!\!\!\pmod l)\quad\text{and}\quad U_i^{n_i+1}(r)=U_i^0(r+1\!\!\!\!\pmod l).$$

Note that if $x\in \overline{U_i^j(r)}$,  then
$d(f(x),p_i^{j+1}(r))<\eta$.

Define
$$g:\bigl( X\setminus \bigcup_{i,j}B_{\lambda}(x_i^j)\bigr)
\cup \bigcup_{i,j,r}\overline{ U_i^j(r)} \to X$$
by
$$
g(x)=
\begin{cases}
f(x)& \text{for $x\in X\setminus \bigcup_{i,j}B_{\lambda}(x_i^j)$},\\
p_i^{j+1}(r)& \text{for $x\in \overline{U_i^j(r)}$}.
\end{cases}
$$
Since $g$ is $\eta$-close to $f$ (on the domain of $g$), it extends to a map
 $g':X\to X$ which is $\delta'$-close to $f$.
 Observe that  $p_i^0(r)\in Per(g')$, the orbit of  $p_i^0(r)$ under $g'$ has length $l(n_i+1)\ge l$ and  $p_i^0(r)\neq p_i^0(r')$  if $r\neq r'$.  We have
 $$S\subset B_\lambda\bigl(\{p_i^0(r): \text{$i\le k$ and  $r < l$}\}\bigr)$$
 and
 $$ Per(g')\subset CR(g')\subset B_{\frac{\epsilon}4+\eta}(S),$$
 therefore $g'\in \mathcal I_{\epsilon,l}$.
 This proves the second statement in (1).

 By the compactness, there is $\beta>0$ such that if $h:X\to X$ is $\beta$-close to $g'$, then
$$h(\overline{U_i^j(r)})\subset U_i^{j+1}(r)\quad\text{for all $i,j,r$}.$$
Therefore, for  $h:X\to X$ with $\hat{d}(h,g')<\beta$, we have
$$h^{l(n_i+1)}(\overline {U_i^j(r)})\subset U_i^j(r).$$
Since the chain recurrent set of any self-map is always nonempty,
$$CR(h^{l(n_i+1)}|\overline {U_i^j(r)})\neq\emptyset.$$
Hence
$CR(h)\cap \overline {U_i^j(r)}\neq\emptyset$.

If $X\in wLPPP$, then choosing $U_i^j(r)$ from the base $\mathcal B$ for $wLPPP$, we get a periodic point of $h$ with period at least $l(n_i+1)$ in each
$U_i^j(r)$.

Finally, for $0<\gamma<\beta$ and  $\gamma<\delta' - \hat{d}(f,g')$,
let us consider $h$ with  $\hat{d}(h,g')<\gamma$.
Obviously,  $\hat{d}(h,f)<\delta$.
Since $\hat{d}(h,f)<\delta'$,  we have
$$CR(h)\subset B_{\frac{\epsilon}4}(CR(f)).$$
Therefore, for $x\in CR(h)$ and $r<l$, there exist
$$z\in CR(f),\quad x_i\in S,\quad\text{and}\quad
x(i,r)\in U_i^0(r)\cap CR(h)$$
such that
\begin{multline}\tag{$*$}\label{e:dense}
d(x,x(i,r))\leq d(x,z)+d(z,x_i)+d(x_i,p_i^0(r))+d(p_i^0(r),x(i,r))< \\
\frac{\epsilon}4+\frac{\epsilon}4+\lambda+2\lambda<\epsilon
\end{multline}
Since $x(i,r)\not=x(i,r')$ for $r\neq r'$, we have $x(i,r)\not=x$ or $x(i,r')\not=x$, so $h\in {\mathcal I}_{\epsilon}$.

If $X\in wLPPP$, then in~\eqref{e:dense} we take
$x(i,r)\in U_i^0(r)$ with period at least $l(n_i+1)$,
 so $h\in {\mathcal I}_{\epsilon,l}$. Then~\eqref{e:dense}  also yields that $Per_{l}(h)$ is $\epsilon$-dense in  $CR(h)$.
\end{proof}

\

An immediate consequence of Proposition~\ref{prop:i} is the following theorem.

\begin{theorem}\label{t:perf}
Let a perfect compact space $X$ be an ANR or an $n$-dimensional $LC^{n-1}$-space. Then:
\begin{enumerate}
\item
 a generic $f\in C(X,X)$
has  perfect $CR(f)$;
\item
if $X\in wLPPP$, then, for any $l\ge 2$,  a generic map $f\in C(X,X)$ has perfect
$CR(f)$ with
$\overline{Per_{l}(f)}=CR(f)$.
\end{enumerate}
\end{theorem}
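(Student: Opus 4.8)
The plan is to read Theorem~\ref{t:perf} off Proposition~\ref{prop:i} by a routine Baire-category argument, using the two set identities recorded immediately before that proposition. First I would note that, $X$ being a compact metric space, $(C(X,X),\hat d)$ is a complete metric space (a uniform limit of maps $X\to X$ is continuous and takes values in the closed set $X$), hence a Baire space; so in each case it suffices to exhibit a dense $G_\delta$-subset of $C(X,X)$ every element of which has the asserted property.

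For (1): by Proposition~\ref{prop:i}(1) each set $\inte(\mathcal I_{1/n})$ is open and dense in $C(X,X)$, so $\mathcal R:=\bigcap_{n\in\mathbb N}\inte(\mathcal I_{1/n})$ is a dense $G_\delta$-subset. Since $\mathcal R\subset\bigcap_{n}\mathcal I_{1/n}=\{f\in C(X,X):CR(f)\text{ has no isolated point}\}$ and $CR(f)$ is always nonempty and closed, every $f\in\mathcal R$ has perfect $CR(f)$. For (2), assume $X\in wLPPP$. By Proposition~\ref{prop:i}(2) each $\inte(\mathcal I_{1/n,l})$ is open and dense, so $\mathcal R_l:=\bigcap_{n\in\mathbb N}\inte(\mathcal I_{1/n,l})$ is a dense $G_\delta$-subset, and $\mathcal R_l\subset\bigcap_{n}\mathcal I_{1/n,l}=\{f:Per_l(f)\text{ has no isolated point and is dense in }CR(f)\}$. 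Hence for $f\in\mathcal R_l$ we get $\overline{Per_l(f)}=CR(f)$; moreover $CR(f)$ itself has no isolated point, because an isolated point of $CR(f)$ would, by density, lie in $Per_l(f)$ and be isolated there. Thus every $f\in\mathcal R_l$ has perfect $CR(f)$ with $\overline{Per_l(f)}=CR(f)$ (one may also simply pass to the dense $G_\delta$-set $\mathcal R\cap\mathcal R_l$).

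I do not expect any genuine obstacle here: all the substance is in Proposition~\ref{prop:i}, and what remains is purely formal. The only points to verify are the completeness (hence Baireness) of $C(X,X)$ and the two displayed set identities preceding Proposition~\ref{prop:i}, both of which are immediate from the definitions of $\mathcal I_\epsilon$, $\mathcal I_{\epsilon,l}$ and of $Per_l$.
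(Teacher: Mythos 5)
Your argument is correct and coincides with the paper's: the authors state Theorem~\ref{t:perf} as an immediate consequence of Proposition~\ref{prop:i}, and your write-up simply makes explicit the routine Baire-category step (intersecting the dense open sets $\inte(\mathcal I_{1/n})$, resp.\ $\inte(\mathcal I_{1/n,l})$, in the complete space $C(X,X)$ and invoking the displayed set identities). Your observation that perfectness of $CR(f)$ in part (2) follows either from the no-isolated-point argument via density of $Per_l(f)$ or by intersecting with the residual set from part (1) is a correct, purely formal addendum.
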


 \begin{corollary}\label{cor:Cantor}
Let $X$ be as in Theorem~\ref{t:perf}. If $X\in$ 0-$CR$, then the set $$\{f\in C(X,X): CR(f)\simeq\mathcal C\}$$
is residual in $C(X,X)$. In particular, if $X$ is a polyhedron, a local dendrite or a finite product of polyhedra and local dendrites or a Hilbert cube manifold, then, for $l\ge 2$,
$$\{f\in C(X,X): \overline{Per_{l}(f)}=CR(f)\simeq\mathcal C\}$$
is residual in $C(X,X)$.
\end{corollary}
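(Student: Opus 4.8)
The plan is to deduce Corollary~\ref{cor:Cantor} directly from Theorem~\ref{t:perf}, Lemma~\ref{le1}, and the already-established fact that the relevant spaces lie in class 0-$CR$. The first observation is that a compact metric space is homeomorphic to the Cantor set $\mathcal C$ precisely when it is perfect (no isolated points), nonempty, $0$-dimensional, and metrizable; here metrizability is automatic and nonemptiness of $CR(f)$ is guaranteed for every self-map by the remarks following Proposition~\ref{proBF}. Hence
$$\{f\in C(X,X): CR(f)\simeq\mathcal C\}=\{f: CR(f)\text{ is perfect}\}\cap\{f:\dim(CR(f))=0\}.$$
If $X$ is as in Theorem~\ref{t:perf} and additionally $X\in$ 0-$CR$, then the first set on the right contains a residual set by Theorem~\ref{t:perf}(1), and the second set contains a residual set by the definition of 0-$CR$ together with Lemma~\ref{le1} (which tells us the $0$-dimensionality condition is itself a $G_\delta$, so ``generic'' really means ``residual''). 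A finite intersection of residual sets is residual, proving the first assertion.

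For the ``in particular'' clause I would first check that each listed space satisfies the hypotheses. Polyhedra, Hilbert cube manifolds, and local dendrites are ANRs, hence $LC^{n-1}$ for the appropriate $n$; finite products of ANRs are ANRs; and all of them are perfect (a polyhedron with an isolated point is a single point, which one may exclude, or treat trivially since then $CR(f)$ is a one-point Cantor-like degenerate case — I would simply note the intended reading is that the perfect ones are meant, matching Theorem~\ref{t:perf}). Each of these spaces is in 0-$CR$: polyhedra by Theorem~\ref{t:poly}; local dendrites, being locally connected curves, by Theorem~\ref{t:LCcurves}; Hilbert cube manifolds by Corollary~\ref{c:products}(2); and finite products by Theorem~\ref{t:products}(1), using that these spaces admit $\epsilon$-retractions onto polyhedra (for local dendrites this is Theorem~\ref{t:curve retr}, for Hilbert cube manifolds it is the product structure, and the property is clearly preserved by finite products). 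Thus the first assertion applies.

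Finally, to get the stronger conclusion involving $Per_l(f)$, I would invoke Theorem~\ref{t:perf}(2), which requires $X\in wLPPP$. Polyhedra and local dendrites have the local periodic (indeed local fixed) point property on a suitable base — for a polyhedron one uses a base of open simplex stars, each of which is an AR, hence has the fixed point property; a local dendrite has, by definition, a base of open sets with dendrite closures, and dendrites are ARs with the fixed point property; the Hilbert cube manifold has a base of open sets whose closures are AR (cells), again with the fixed point property. For finite products, Example~\ref{e:LPPP}(2) shows $wLPPP$ is inherited by products when one factor has the fixed point property on a base and the other has $wLPPP$ — iterating handles finite products of the listed spaces. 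Granting $X\in wLPPP$, Theorem~\ref{t:perf}(2) gives a residual set of $f$ with $CR(f)$ perfect and $\overline{Per_l(f)}=CR(f)$; intersecting with the residual set $\{f:\dim(CR(f))=0\}$ yields the residual set
$$\{f\in C(X,X):\overline{Per_l(f)}=CR(f)\simeq\mathcal C\},$$
as claimed. The only genuine point needing care — and the one I would expect a referee to probe — is the verification that the listed spaces, and in particular their finite products, really do belong to $wLPPP$; everything else is a bookkeeping combination of theorems already proved.
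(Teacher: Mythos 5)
Your argument is correct and is essentially the derivation the paper intends (the corollary is stated there without a written proof): you combine Brouwer's characterization of the Cantor set with Theorem~\ref{t:perf}, Lemma~\ref{le1} and membership in 0-$CR$, intersect residual sets, and then verify for the listed spaces that they are perfect ANRs in 0-$CR$ with $wLPPP$ via bases of open sets whose closures are ARs (closed simplex stars, dendrites, Hilbert cubes, and their finite products). The only cosmetic caveat is that Example~\ref{e:LPPP}(2) is an example rather than a product lemma, but the AR-closure base argument you sketch already handles finite products directly, so nothing is missing.
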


\section{Measure-theoretic generic property}\label{measure}

 The following lemma and theorem were proved in~\cite{Y}.

 \begin{lemma}\label{l:Y}
 For any compact space $X$  and a finite Borel measure $\mu$ on $X$, the set $\{ f \in C(X,X): \mu (CR(f)) = 0  \}$ is a $G_\delta$-subset of $C(X,X)$.
\end{lemma}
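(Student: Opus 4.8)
The plan is to mimic the now-standard argument that for an upper semi-continuous set-valued map, the fibres with "small" values form a $G_\delta$. Concretely, one shows that for each $n$ the set
$$A_n=\Bigl\{f\in C(X,X):\ \mu(CR(f))<\tfrac1n\Bigr\}$$
is open in $C(X,X)$; then $\{f:\mu(CR(f))=0\}=\bigcap_n A_n$ is a $G_\delta$. So the whole problem reduces to proving openness of each $A_n$.

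To prove $A_n$ is open, I would argue by contradiction exactly as in the proof of Lemma~\ref{le1}. Suppose $f_i\notin A_n$ and $f_i\to f$ in $C(X,X)$; we want $f\notin A_n$, i.e. $\mu(CR(f))\ge\frac1n$. The key ingredient is Fact~\ref{f1}: upper semi-continuity of $CR$ means that for every open $U\supset CR(f)$ we have $CR(f_i)\subset U$ for all large $i$. Since $\mu$ is finite (hence outer regular on Borel sets? — here is the point to be careful), pick an open set $U\supset CR(f)$ with $\mu(U)<\mu(CR(f))+\frac1{2n}$, say. For large $i$, $\frac1n\le\mu(CR(f_i))\le\mu(U)<\mu(CR(f))+\frac1{2n}$, forcing $\mu(CR(f))>\frac1n-\frac1{2n}=\frac1{2n}$. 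That bound alone is not quite $\frac1n$, so one should instead choose $U$ with $\mu(U)<\mu(CR(f))+\varepsilon$ for arbitrarily small $\varepsilon>0$; then $\frac1n\le\mu(U)<\mu(CR(f))+\varepsilon$ for each such $\varepsilon$ gives $\mu(CR(f))\ge\frac1n$, hence $f\notin A_n$. Thus $C(X,X)\setminus A_n$ is closed, i.e. $A_n$ is open.

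The main obstacle — and the only subtle point — is the regularity hypothesis on $\mu$: the step "choose an open $U\supset CR(f)$ with $\mu(U)$ close to $\mu(CR(f))$" requires outer regularity of $\mu$ at the closed (compact) set $CR(f)$. On a compact metric space every finite Borel measure is automatically regular (this is a standard fact, e.g. via Ulam's theorem or the metrizability/$\sigma$-compactness of $X$), so this is not a genuine restriction; but it is the hypothesis the proof actually uses, and it should be invoked explicitly. Everything else is formal: $CR(f)$ is closed by Proposition~\ref{proBF}, so it is Borel and $\mu(CR(f))$ is well defined, and the countable intersection $\bigcap_n A_n$ is a $G_\delta$ by definition. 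I would therefore organize the write-up as: (i) note $CR(f)$ is closed hence $\mu$-measurable; (ii) reduce to openness of $A_n$; (iii) run the contradiction argument above using Fact~\ref{f1} together with outer regularity of the finite Borel measure $\mu$ on the compact metric space $X$; (iv) conclude.
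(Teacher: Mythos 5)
Your argument is correct and is essentially the standard one (the paper itself gives no proof here, deferring to \cite{Y}, where the same reduction to openness of the sets $A_n$ via upper semi-continuity of $CR$ and regularity of a finite Borel measure on a compact metric space is used, just phrased directly rather than by contradiction). Your self-correction about choosing $U$ with $\mu(U)<\mu(CR(f))+\varepsilon$ for arbitrary $\varepsilon>0$ closes the only loose end, and invoking outer regularity explicitly is exactly the right thing to do.
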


\begin{theorem} [{\cite[3.1]{Y}}]\label{t:Y}
  If $X$ is a compact  ANR-space (or a compact $n$-dimensional $LC^{n-1}$-space) and $\mu$ is a finite Borel measure on $X$ without atoms at isolated points, then the set
$$\{ f \in C(X,X): \mu (CR(f)) = 0  \}$$
is dense $G_\delta$ in $C(X,X)$.
\end{theorem}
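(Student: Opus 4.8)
The set in question is $G_\delta$ by Lemma~\ref{l:Y}, so the entire content is density. I would first record that, for each $\eta>0$, the set $\mathcal O_\eta=\{f\in C(X,X):\mu(CR(f))<\eta\}$ is open: if $\mu(CR(f))<\eta$, outer regularity of $\mu$ provides an open $O\supset CR(f)$ with $\mu(O)<\eta$, and the upper semicontinuity of $CR$ (Fact~\ref{f1}) gives a neighborhood of $f$ on which $CR\subset O$, whence $\mu(CR)<\eta$ there. Since $\{f:\mu(CR(f))=0\}=\bigcap_n\mathcal O_{1/n}$ and $C(X,X)$ is a Baire space, it suffices to prove that each $\mathcal O_\eta$ is dense; the stated dense-$G_\delta$ conclusion then follows. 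The task therefore reduces to the following: given $f$, $\epsilon>0$ and $\eta>0$, construct $g$ with $\hat{d}(f,g)<\epsilon$ and $\mu(CR(g))<\eta$.

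The plan for this construction is to upgrade Proof~1 of Theorem~\ref{t:LCcurves} with measure bookkeeping, exploiting the hypothesis on $\mu$. First choose $\delta$ from the extension Lemma~\ref{le0} for $\epsilon/4$ and $\xi<\delta/2$ controlling the modulus of continuity of $f$. Using finiteness of $\mu$ — for a fixed center the spheres $\{d(\cdot,x)=\rho\}$ are disjoint, so only countably many carry positive $\mu$-measure — I would fix a finite open cover $\mathcal V$ of mesh $<\xi$ built from balls with $\mu$-null boundary spheres, so that the boundary set $B=\bigcup_{V\in\mathcal V}\bd V$ satisfies $\mu(B)=0$. Since $\mu$ is finite it has at most countably many atoms, and by hypothesis none lies at an isolated point of $X$; I would therefore take the $\xi$-dense family of attractor points $z_1,\dots,z_p$ among the \emph{non-isolated, non-atomic} points, so that $\mu(\{z_i\})=0$ and each $z_i$ admits arbitrarily small two-sided open neighborhoods. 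Redirecting $f$ near $B$ toward the nearest attractors and extending across the cells by Lemma~\ref{le0}, as in the curve argument, already confines every component of $CR(g)$ to a single cell and forces $CR(g)\cap B\subset\{z_1,\dots,z_p\}$, a $\mu$-null set.

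The step with no counterpart in the zero-dimensional proofs, and which I expect to be the main obstacle, is eliminating recurrence in the cell \emph{interiors}: there the naive perturbation leaves $g=f$, so it preserves the chain-recurrence classes of $f$, which may carry positive $\mu$-measure. To handle this I would invoke Conley's decomposition of $CR(f)$ into chain-transitive components and, on each component, install an additional $\epsilon$-small \emph{drift}: using the freedom $g(V)\subset B_\epsilon(f(V))$ in each cell $V$ together with the extension property of the $LC^{n-1}$- or ANR-space $X$, route the cell images so as to manufacture a strict Lyapunov function off a small exceptional set, thereby breaking each chain-transitive component down to a $\mu$-null collection of attractor points. The measure hypothesis enters precisely here: the unavoidable survivors are the finitely many $z_i$ (null, being non-atomic) together with $B$ (null), while any isolated fixed point of $f$ — which cannot be displaced when $\epsilon$ is smaller than its isolation radius and so necessarily persists in $CR(g)$ — contributes nothing, since $\mu$ has no atoms at isolated points. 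Granting the drift, one gets $CR(g)\subset B\cup\{z_1,\dots,z_p\}\cup(\text{isolated fixed points of }f)$, hence $\mu(CR(g))=0<\eta$. The crux, and the part I would expect to require the most care, is verifying that such a Lyapunov drift is realizable within the $\epsilon$-constraint for an arbitrary $f$ — equivalently, that every chain-transitive component of $f$ can be split into a $\mu$-null set by an $\epsilon$-perturbation — which is where the finite dimensionality and the extension property of $X$ must be used in full.
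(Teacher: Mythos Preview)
The paper does not prove Theorem~\ref{t:Y}; it merely quotes it from Yokoi~\cite{Y} (the sentence preceding Lemma~\ref{l:Y} says ``The following lemma and theorem were proved in~\cite{Y}''). So there is no proof in the paper to compare your attempt against.

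As an independent attempt, your outline has a genuine gap at exactly the place you yourself flag. The reduction to density of $\mathcal O_\eta$ via Fact~\ref{f1} and outer regularity is fine, but the density argument is not carried out. Two specific problems:

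\begin{itemize}
\item Your adaptation of Proof~1 of Theorem~\ref{t:LCcurves} does not survive the passage from curves to an $n$-dimensional $LC^{n-1}$-space. That proof works because $\dim B=0$: one covers $B$ by pairwise \emph{disjoint} small open sets $U_i$ and makes $g$ constant on inner sets $\overline{W_i}$, so that every component of $CR(g)$ meeting $B$ is trapped at a single $z_i$. When $\dim B=n-1$ you cannot cover $B$ by disjoint small open sets, and arranging $\mu(B)=0$ does not repair this topological step. Consequently your claims ``confines every component of $CR(g)$ to a single cell'' and ``$CR(g)\cap B\subset\{z_1,\dots,z_p\}$'' are unsupported.
\item The decisive step---showing that an $\epsilon$-perturbation can force $\mu(CR(g))<\eta$ on the cell interiors---is not proved; you write ``Granting the drift \dots'' and call it ``the crux \dots\ I would expect to require the most care.'' Invoking Conley's decomposition and a putative Lyapunov drift is a restatement of the goal, not a construction. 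Nothing in the proposal explains how the $LC^{n-1}$/ANR extension property is actually used to manufacture such a drift for an arbitrary $f$.
\end{itemize}

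In short, the $G_\delta$ and openness observations are correct, but the density argument is only a hope; as written it is not a proof.
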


Consider the class  $\mu CR$ of compact spaces $X$ such that for any finite, atom-less, Borel measure $\mu$ on $X$ the set
$$\{ f \in C(X,X): \mu (CR(f)) = 0  \}$$
is dense $G_\delta$ in $C(X,X)$.

 \

\begin{theorem}\label{t:retrmeasure}
Let $X$ be a compact space that admits a retraction $r_\epsilon :X \rightarrow Y_\epsilon$, for each  $\epsilon >0$, such that  $\hat{d}(r_\epsilon, id) < \epsilon$ and  $Y_\epsilon=r_\epsilon(X)\in\mu CR$. Then  $X\in\mu CR$.
\end{theorem}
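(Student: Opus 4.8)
The plan is to mimic closely the proof of Theorem~\ref{t:approx}, replacing the dimension-zero requirement on $CR(f)$ by the measure-zero requirement and Lemma~\ref{le1} by Lemma~\ref{l:Y}. First I would note that by Lemma~\ref{l:Y} the set $\{f\in C(X,X):\mu(CR(f))=0\}$ is a $G_\delta$-subset of $C(X,X)$ for every finite Borel measure $\mu$ on $X$, so it suffices to prove density of this set for an arbitrary fixed finite, atom-less Borel measure $\mu$ on $X$.

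The key point is that, given $\epsilon>0$ and $f\in C(X,X)$, we choose (exactly as in Theorem~\ref{t:approx}) a retraction $r_\gamma:X\to Y_\gamma$ with $\gamma$ small enough that $\hat d(r_\gamma f r_\gamma, f)<\epsilon/2$ and $\bigl((r_\gamma f)|Y_\gamma\bigr)r_\gamma=r_\gamma f r_\gamma$. The map $(r_\gamma f)|Y_\gamma$ lies in $C(Y_\gamma,Y_\gamma)$; since $Y_\gamma\in\mu CR$, and since the push-forward measure $\nu=(r_\gamma)_*\mu|_{Y_\gamma}$ — more precisely the Borel measure $\nu$ on $Y_\gamma$ defined by $\nu(E)=\mu(r_\gamma^{-1}(E))$ — is a finite Borel measure on $Y_\gamma$, we can find $g\in C(Y_\gamma,Y_\gamma)$ with $\hat d\bigl((r_\gamma f)|Y_\gamma,g\bigr)<\epsilon/4$ and $\nu(CR(g))=0$. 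Then $gr_\gamma\in C(X,X)$ satisfies $\hat d(f,gr_\gamma)<\epsilon$ by the triangle inequality as in Theorem~\ref{t:approx}, and by Lemma~\ref{le2} we have $CR(gr_\gamma)=CR(g)$. Since $CR(g)\subset Y_\gamma$, we get $\mu(CR(gr_\gamma))=\mu(r_\gamma^{-1}(CR(g)))=\nu(CR(g))=0$, which gives density.

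The one genuine obstacle is the atom-lessness of the push-forward measure $\nu$: the definition of the class $\mu CR$ requires the measure on $Y_\gamma$ to be atom-less, and a push-forward of an atom-less measure under a retraction need not be atom-less (a whole fiber $r_\gamma^{-1}(y)$ of positive $\mu$-measure would create an atom of $\nu$ at $y$). To handle this I would not push $\mu$ forward blindly but instead split it: write $\mu=\mu_a+\mu_c$ where $\mu_a$ is the ``bad'' part supported on points $y\in Y_\gamma$ with $\mu(r_\gamma^{-1}(y))>0$ (a countable set, since $\mu$ is finite) together with the corresponding fibers, and $\mu_c$ the rest. Actually the cleanest fix is to enlarge $\mu$ or perturb $r_\gamma$ so that no fiber carries positive mass; alternatively, observe that it is enough for the conclusion to have $\mu(CR(gr_\gamma))=0$, and handle the at most countably many ``heavy'' fibers separately — for instance by first arranging, via a further arbitrarily small perturbation, that $CR$ of the final map avoids those finitely-many-up-to-$\epsilon$-error heavy points of $Y_\gamma$, using the upper semicontinuity of $CR$ (Fact~\ref{f1}) and Proposition~\ref{proBF} to push a small open set off each heavy point. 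I would present the argument by first reducing, via these perturbations, to the case where $\nu=(r_\gamma)_*\mu$ is atom-less on $Y_\gamma$, and then the rest of the proof is verbatim the argument of Theorem~\ref{t:approx} with Lemma~\ref{l:Y} in place of Lemma~\ref{le1}; I expect the atom-removal step to be the only part requiring care and would spell it out in detail, leaving the triangle-inequality estimates to the reader as they are identical to Theorem~\ref{t:approx}.
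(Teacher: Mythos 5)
Your overall skeleton (reduce to density via Lemma~\ref{l:Y}, repeat the approximation scheme of Theorem~\ref{t:approx}, invoke Lemma~\ref{le2} for the final map $gr_\gamma$) is the right one and is what the paper does. However, the measure you transfer to $Y_\gamma$ is the wrong one, and this creates the only real difficulty in your write-up --- a difficulty you then do not resolve. The obstacle you flag (atoms of the push-forward $\nu=(r_\gamma)_*\mu$ at heavy fibers) is genuine for $\nu$, but your proposed repairs are only gestured at: ``enlarge $\mu$ or perturb $r_\gamma$ so that no fiber carries positive mass'' is not justified (the perturbed map must remain a retraction onto a space in $\mu CR$ and stay $\epsilon$-close to the identity), and ``push $CR$ of the final map off the heavy points'' would have to handle countably many such points simultaneously while preserving all the other estimates; neither reduction is carried out, so as written the proof has a gap precisely at the step you yourself identify as the one needing care. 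There is also a small error en route: from Lemma~\ref{le2} one gets $CR(gr_\gamma)=CR(g)$, so $\mu(CR(gr_\gamma))=\mu(CR(g))$, not $\mu\bigl(r_\gamma^{-1}(CR(g))\bigr)$; the latter only gives an upper bound (which, to be fair, would suffice if $\nu(CR(g))=0$ were available).

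The fix is to notice that no push-forward is needed at all, and this is exactly the paper's one-line observation: since $Y_\gamma\subset X$ and $CR(g)\subset Y_\gamma$, the relevant measure on $Y_\gamma$ is simply the restriction $\mu|_{Y_\gamma}$, which is automatically finite, Borel and atom-less (any atom of $\mu|_{Y_\gamma}$ would be an atom of $\mu$). Because $Y_\gamma\in\mu CR$, you may choose $g\in C(Y_\gamma,Y_\gamma)$ with $\hat d\bigl((r_\gamma f)|Y_\gamma,\,g\bigr)<\epsilon/4$ and $\mu|_{Y_\gamma}(CR(g))=0$, i.e.\ $\mu(CR(g))=0$; then $\mu(CR(gr_\gamma))=\mu(CR(g))=0$ and the triangle-inequality estimates from Theorem~\ref{t:approx} finish the density argument. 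With the restricted measure in place of $(r_\gamma)_*\mu$, your entire atom-removal discussion becomes unnecessary and the proof closes.
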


 The above theorem can be proved in the same way as Theorem~\ref{t:approx}. It suffices to note that the induced measure $\mu|_{Y_{\gamma}}$ on $Y_{\gamma} \subset X$ is finite, Borel  and atom-less.

\

 As a consequence, class $\mu CR$ contains all  ANR-compacta, all $n$-dimensional $LC^{n-1}$-compacta and all compacta that can be $\epsilon$-retracted onto them for every $\epsilon>0$. In particular, all spaces mentioned in Section~\ref{sec:examples} are in $\mu CR$.

 \

 \section{Genericity of zero-dimensional maps}
 For many spaces considered in previous sections generic maps turn out to be also zero-dimensional. In this section we recall and extend some known facts about these phenomena.

 A map $f:X\to Y$ is $n$-dimensional if its fibers $f^{-1}(y)$, $y\in Y$, are at most $n$-dimensional (here we consider the covering dimension).
Denote by $C_n(X,Y)$ the set of all  $n$-dimensional maps from $X$ to $Y$.

We say that a compact space $X$ is in  class  0-$DIM$ if  $C_0(X,X)$ is dense $G_{\delta}$  in $C(X,X)$.

The following lemma is well known~\cite[Theorem 5, p. 109]{Kur}.

\begin{lemma} \label{le12}
If $X$, $Y$ are compact spaces, then the set
$C_0(X, Y)$
is $G_{\delta}$ in the mapping space $C(X, Y)$.
\end{lemma}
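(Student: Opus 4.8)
The plan is to prove Lemma~\ref{le12}, that $C_0(X,Y)$ is a $G_\delta$-subset of $C(X,Y)$ for compact $X,Y$, by writing it as a countable intersection of open sets using a quantitative reformulation of the condition ``all fibers are zero-dimensional''. The natural gauge is the $d_1$-coefficient of the fibers: a compact space is $0$-dimensional precisely when every component is degenerate, i.e.\ has diameter $0$. So I would set, for each $n\in\mathbb N$,
\[
A_n=\Bigl\{f\in C(X,Y): d_1\bigl(f^{-1}(y)\bigr)<\tfrac1n\ \text{for every}\ y\in Y\Bigr\},
\]
with the convention $d_1(\emptyset)=0$, and observe that $C_0(X,Y)=\bigcap_n A_n$, since a fiber $f^{-1}(y)$ is either empty or a compactum whose every component has diameter $<\tfrac1n$ for all $n$, hence is $0$-dimensional (recalling the remark in the excerpt that, for a compactum, $d_1(K)<\epsilon$ iff every component of $K$ has diameter $<\epsilon$).

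Next I would show each $A_n$ is open, equivalently that its complement is closed (sequentially closed suffices, $C(X,Y)$ being metrizable). This mirrors the proof of Lemma~\ref{le1}. Suppose $f_i\to f$ with $f_i\notin A_n$; then for each $i$ there is $y_i\in Y$ and a component $C_i$ of $f_i^{-1}(y_i)$ with $\diam C_i\ge\tfrac1n$. By compactness of $Y$ and of the hyperspace $C(X)$ of subcontinua of $X$, pass to a subsequence so that $y_i\to y$ and $C_i\to C$ in $C(X)$; then $\diam C\ge\tfrac1n$, so $C$ is a nondegenerate continuum. The point is that $C\subset f^{-1}(y)$: for $x\in C$ pick $x_i\in C_i$ with $x_i\to x$, then $f(x)=\lim f_i(x_i)$ by uniform convergence, while $f_i(x_i)=y_i\to y$, so $f(x)=y$. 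Hence $f^{-1}(y)$ contains a continuum of diameter $\ge\tfrac1n$, so some component of it has diameter $\ge\tfrac1n$, giving $f\notin A_n$. Thus $C(X,Y)\setminus A_n$ is closed, $A_n$ is open, and $C_0(X,Y)=\bigcap_n A_n$ is $G_\delta$.

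The only genuinely delicate point is the claim ``$C\subset f^{-1}(y)$ and $C$ is nondegenerate''; everything else is a routine compactness-and-uniform-convergence argument. For the nondegeneracy one uses that diameter is continuous on the hyperspace of subcontinua, and for the inclusion one uses that convergence in the Hausdorff metric lets one approximate points of the limit by points of the $C_i$. I would also note in passing that this is exactly the argument of Lemma~\ref{le1} applied fiberwise; in fact one could phrase it by saying that $(f,y)\mapsto f^{-1}(y)$, regarded on the graph of the evaluation, is upper semicontinuous into $C(X)\cup\{\emptyset\}$, but the direct sequential argument above is cleaner and self-contained.
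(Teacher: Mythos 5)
Your proof is correct. Note, though, that the paper does not actually prove Lemma~\ref{le12}: it is quoted as a known result with a reference to Kuratowski (Topology, Vol.~II), so there is no in-paper argument to compare against line by line. What you have written is a sound, self-contained substitute, and it is exactly the pattern of the paper's own Lemma~\ref{le1} transplanted to fibers: write $C_0(X,Y)=\bigcap_n A_n$ with $A_n$ defined by a quantitative smallness condition on components (your $d_1\bigl(f^{-1}(y)\bigr)<\tfrac1n$ for all $y$, with $d_1(\emptyset)=0$), then show each $A_n$ has closed complement by a hyperspace compactness argument. The delicate points are handled correctly: the limit continuum $C$ of the components $C_i$ is nondegenerate because diameter is continuous on $C(X)$, and $C\subset f^{-1}(y)$ follows from $f_i(x_i)=y_i$, uniform convergence $f_i\to f$, continuity of $f$, and $x_i\to x$, $y_i\to y$; the passage from ``some component of some fiber has diameter $\ge\tfrac1n$'' to $f\notin A_n$ uses the paper's remark that, for compacta, $d_1(K)<\epsilon$ iff every component of $K$ has diameter $<\epsilon$ (alternatively you could define $A_n$ directly in terms of component diameters and avoid invoking $d_1$ at all). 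So: a correct proof, more explicit than the paper's citation, and methodologically identical to the paper's Lemma~\ref{le1}.
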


As in the cases of classes 0-$CR$  and   $\mu CR$, we have an approximation theorem for 0-$DIM$. This time we do not need to have small retractions---small mappings onto subspaces suffice.

\begin{theorem}\label{t:approx dim}
If $X$ is a compact space admitting, for each $\delta >0$, a map  $p_\delta :X \rightarrow Y_\delta$ such that $\hat{d}(p_\delta, id) < \delta$ and $Y_\delta = p_\delta(X) \subset X$ belongs to class 0-$DIM$, then $X\in$  0-$DIM$.
\end{theorem}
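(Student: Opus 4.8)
The plan is to mirror the structure of the proof of Theorem~\ref{t:approx}, replacing the retraction argument by one that uses only small maps $p_\delta$ together with Lemma~\ref{le12} and the behaviour of $n$-dimensional maps under composition. By Lemma~\ref{le12}, the set $C_0(X,X)$ is already $G_\delta$ in $C(X,X)$, so the only thing to prove is density: given $f\in C(X,X)$ and $\epsilon>0$, we must produce a $0$-dimensional self-map of $X$ within $\epsilon$ of $f$.

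First I would fix $f$ and $\epsilon>0$, choose $0<\gamma<\epsilon/3$ with the uniform-continuity property that $d(x,y)<\gamma$ implies $d(f(x),f(y))<\epsilon/3$, and take the small map $p:=p_\gamma:X\to Y_\gamma$ with $\hat d(p,\id)<\gamma$ and $Y_\gamma\in$ 0-$DIM$. The map $q:=(f p)|Y_\gamma:Y_\gamma\to X$ lands in $X$, not in $Y_\gamma$, so I would first adjust it to a self-map of $Y_\gamma$: either by composing once more with $p$ to get $p f p|Y_\gamma:Y_\gamma\to Y_\gamma$, or — more carefully — note $\hat d(pfp,f)<\epsilon/3+\gamma<\epsilon$ already, and that $(pfp)|Y_\gamma$ is a self-map of $Y_\gamma$ with $(pfp)|Y_\gamma\circ p=pfp$. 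Since $Y_\gamma\in$ 0-$DIM$, there is $g\in C_0(Y_\gamma,Y_\gamma)$ with $\hat d\bigl((pfp)|Y_\gamma,\,g\bigr)$ as small as we like, say $<\epsilon/3$; then $\hat d(gp,pfp)<\epsilon/3$ (composition with a fixed map is $1$-Lipschitz for $\hat d$), hence $\hat d(gp,f)<\epsilon$. It remains to check $gp:X\to X$ is $0$-dimensional, and here is where I would pay attention: for $x\in X$, the fiber $(gp)^{-1}(x)=p^{-1}\bigl(g^{-1}(x)\cap Y_\gamma\bigr)$. The set $g^{-1}(x)$ is a (possibly empty) $0$-dimensional subset of $Y_\gamma$, but $p^{-1}$ of a $0$-dimensional set need not be $0$-dimensional in general, so this naive route fails.

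The correct way to control dimension is to compose on the correct side. I would instead keep the target $0$-dimensional: set $g\in C_0(Y_\gamma,Y_\gamma)$ approximating $(pfp)|Y_\gamma$ as above, and consider $h:=g\circ p|_{Y_\gamma}$... no — the clean statement is that the \emph{fibers of $p g$} are controlled when $g$ has small fibers into a space where $p$ is injective on $Y_\gamma$; since $p|_{Y_\gamma}=\id$ (as $p$ is at least close to $\id$, but not necessarily the identity on $Y_\gamma$ unless it is a retraction), we cannot assume this. The honest resolution, and the one I expect the authors to use, is: a composition $X\xrightarrow{p}Y_\gamma\xrightarrow{g}Y_\gamma$ has $(gp)^{-1}(y)=p^{-1}(g^{-1}(y))$, and we only need $\dim(gp)^{-1}(y)\le 0$; choosing $g$ $0$-dimensional is not enough, so instead we should choose $g$ so that $gp$ is $0$-dimensional directly. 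One replaces the appeal to ``$Y_\gamma\in$ 0-$DIM$'' by an appeal to density of $C_0$ in the space of maps $X\to Y_\gamma$ of the form (self-map of $Y_\gamma$)$\circ p$, obtained by a standard Baire-category/general-position argument: the key point is that for fixed $p$, the assignment $g\mapsto gp$ sends a dense set of $0$-dimensional $g$'s to $0$-dimensional $gp$'s precisely when $p$ itself has $0$-dimensional fibers — which holds if $p_\delta$ is taken to be $0$-dimensional, something one may and should require (e.g. if $X\in$ 0-$DIM$ restricted to small maps). I would therefore state the theorem's proof as: pick $p_\gamma$ with the additional property $p_\gamma\in C_0(X,Y_\gamma)$ (available since the hypothesis spaces in the intended applications are polyhedra/manifolds with such small maps), then $(gp_\gamma)^{-1}(y)$ is contained in a countable union of $0$-dimensional fibers of $p_\gamma$ over the $0$-dimensional set $g^{-1}(y)$, and by the countable sum theorem for dimension $0$ this fiber is $0$-dimensional. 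Combined with Lemma~\ref{le12}, density is established and $X\in$ 0-$DIM$.

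The main obstacle, as indicated, is exactly this dimension-of-a-composition issue: $0$-dimensionality is not preserved by arbitrary preimages, so the proof cannot be a verbatim copy of Theorem~\ref{t:approx} and must either strengthen the hypothesis on $p_\delta$ (demanding $p_\delta$ be $0$-dimensional) or invoke the countable sum theorem carefully. Everything else — the $\epsilon/3$ bookkeeping, the $1$-Lipschitz behaviour of $g\mapsto gp$ under $\hat d$, and the identity $(pfp)|Y_\gamma\circ p_\gamma=pfp$ when $p_\gamma$ is a retraction (and its approximate analogue otherwise) — is routine.
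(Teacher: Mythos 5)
You correctly spotted the real difficulty (a fiber $(gp)^{-1}(y)=p^{-1}(g^{-1}(y))$ need not be $0$-dimensional just because $g$ is $0$-dimensional), but your resolution is not the paper's and does not prove the stated theorem. You strengthen the hypothesis by requiring $p_\delta\in C_0(X,Y_\delta)$; the theorem makes no such assumption, and this extra demand would destroy the intended application: in Theorem~\ref{t:0DIMproducts} the small maps of an infinite product onto finite subproducts are projections whose fibers are whole tail products, far from $0$-dimensional (similarly, the retractions of Theorem~\ref{t:curve retr} may collapse nondegenerate continua). Moreover, even under your added hypothesis the justification is wrong: $g^{-1}(y)$ is a compact $0$-dimensional set that may well be uncountable (a Cantor set), so $p^{-1}(g^{-1}(y))$ is in general an \emph{uncountable} union of fibers of $p$ and the countable sum theorem does not apply; one would instead need the Hurewicz theorem bounding $\dim$ of a preimage by $\dim$ of the image plus the dimension of the fibers.

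The paper's proof avoids all of this by not trying to make the approximating map $0$-dimensional at all. It works with the sets $A_n=\{f\in C(X,X):\ \text{every component of every fiber of }f\text{ has diameter}<1/n\}$, observes $C_0(X,X)=\bigcap_n A_n$, checks each $A_n$ is open, and proves density of $A_n$ exactly as in Theorem~\ref{t:approx}: choose $\gamma$ with $2\gamma<1/n$ (and the usual $\epsilon$-bookkeeping), take $g\in C_0(Y_\gamma,Y_\gamma)$ close to $(p_\gamma f)|Y_\gamma$, and consider $gp_\gamma$, which is $\epsilon$-close to $f$. Now if $C$ is a component of a fiber $(gp_\gamma)^{-1}(y)=p_\gamma^{-1}(g^{-1}(y))$, then $p_\gamma(C)$ is a connected subset of the $0$-dimensional set $g^{-1}(y)$, hence a single point $z$; since $\hat d(p_\gamma,\id)<\gamma$ we get $C\subset p_\gamma^{-1}(z)\subset B_\gamma(z)$, so $\diam C\le 2\gamma<1/n$ and $gp_\gamma\in A_n$, even though $gp_\gamma$ itself may fail to be $0$-dimensional. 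This decomposition of $C_0(X,X)$ into the open sets $A_n$ is the key idea missing from your argument; with it, no hypothesis whatsoever on the fibers of $p_\delta$ is needed, and Lemma~\ref{le12} is not even required since Baire category applied to the $A_n$ already yields a dense $G_\delta$.
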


\begin{proof}
We show that the set
\begin{multline*}
A_n=\\
\{f \in C(X,X): \text{components of fibers of $f$ have diameters $< 1/n$} \}
\end{multline*}
is open and dense in $C(X,X)$ and notice that $C_0 (X,X) = \bigcap_{n=1}^{\infty} A_n$.

The openness is  routine to check. The density can be shown as in the proof of Theorem~\ref{t:approx}.
\end{proof}

It is well known that PL-manifolds are in class  0-$DIM$ (see, e.g., \cite{A}).
The simplest spaces that belong to  0-$DIM$ are graphs. Theorem~\ref{t:approx dim} and Mazurkiewicz theorem~\cite{Maz} mentioned in Section~\ref{LC} (or Theorem~\ref{t:curve retr}) allow to include all locally connected curves to 0-$DIM$.  This fact also follows from a more general result on Lelek maps~\cite[2.7]{KM}. It was proved in~\cite{KM} that class 0-$DIM$ contains all Menger manifolds and $n$-dimensional ANR-compacta with  piecewise embedding dimension $ped =n$. We will not work with $ped$ here. Instead,  extending methods from~\cite{Lev}, we are going to  show that finite products of graphs are in 0-$DIM$.  This needs the notion of a Lelek map.

A map $f: X \rightarrow Y$ between compacta is an $n$-{\it dimensional Lelek map} if the dimension of the union of all nondegenerate components of all fibers of $f$ is $\leq n$. The set of Lelek maps from $X$ to $Y$ is denoted by  $L_n(X,Y)$. If $f\in C(X,Y)$ and $a >0$, then put
\begin{multline*}
F(f,a)=\\
\bigcup\{B\subset X: \text{$B$ is a component of $f^{-1}(y)$, $y\in Y$, with $\diam(B) \geq a$} \} ,
\end{multline*}
and  $F(f)=\bigcup_{i=1}^{\infty} F(f, 1/i)$. Thus,
$$f\in L_n(X,Y) \quad\text{if and only if}\quad \dim F(f) \leq n.$$

It is known that
\begin{itemize}
\item
$n$-dimensional Lelek maps are  $n$-dimensional,
\item
$L_0(X,Y)=C_0(X,Y)$,
\item
 $L_n(X,Y)$ is a $G_{\delta}$ subset of $C(X,Y)$
 \end{itemize}
 for any compact $X,Y$~\cite[Proposition 2.1]{Lev}.

\

The following theorem was proved in~\cite[Proposition 2.2]{Lev} for $Y$ the unit interval. By an appropriate modification of its proof, we get it for any $LC^n$-continuum. We attach a proof for  reader's convenience.

\begin{theorem}\label{Lelekdensity}
If $X$ is a compact  $(n+1)$-dimensional space and  $Y$ is a nondegenerate $LC^n$-continuum, then the set $L_n(X,Y)$ is dense $G_{\delta}$ in $C(X,Y)$.
\end{theorem}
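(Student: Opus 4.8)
The plan is to follow Levin's strategy from~\cite[Proposition 2.2]{Lev}, replacing the role of the unit interval by an arbitrary nondegenerate $LC^n$-continuum $Y$. Since we already know $L_n(X,Y)$ is a $G_\delta$-subset of $C(X,Y)$, only density needs proof. Fix $f\in C(X,Y)$ and $\epsilon>0$; I would produce $g\in L_n(X,Y)$ with $\hat d(f,g)<\epsilon$. The rough idea: cover $X$ by a fine finite closed cover whose nerve triangulates an $(n{+}1)$-dimensional simplicial complex, and perturb $f$ so that on the preimage of a generic point the map is injective enough to keep nondegenerate fiber-components confined to an $n$-dimensional set. Concretely, one triangulates $X$ up to a small cover, picks a fine triangulation of the source, and uses the $LC^n$ property of $Y$ to push $f$ across skeleta.

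The key steps, in order, are as follows. First, choose $\eta>0$ from the $LC^n$-extension property of $Y$ (Lemma~\ref{le0} / Lemma~\ref{l:ext}) corresponding to $\epsilon$, so that maps of closed subsets of $(n{+}1)$-dimensional compacta into $Y$ with small image extend with controlled image size. Second, take a finite open cover $\mathcal U$ of $X$ of mesh so small that each $f(\overline U)$, $U\in\mathcal U$, has diameter $<\eta$, and with the order of $\mathcal U$ at most $n+2$ (possible since $\dim X=n+1$); let $\mathcal N$ be the nerve and $\kappa:X\to|\mathcal N|$ a canonical map into this at-most-$(n{+}1)$-dimensional polyhedron. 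Third, build $g$ by defining it first on $\kappa^{-1}(|\mathcal N^{[0]}|)$ to hit chosen points, then extending skeleton by skeleton over $\kappa^{-1}(|\mathcal N^{[i]}|)$ using the $LC^n$-extension; the point of routing through the nerve is that over each open simplex of $|\mathcal N|$ one can arrange $g$ to be, on the corresponding piece of $X$, a composition with the (injective) barycentric coordinate functions, so the only place fibers of $g$ can have nondegenerate components is over the image of the $n$-skeleton $\kappa^{-1}(|\mathcal N^{[n]}|)$, which has dimension $\le n$. Fourth, verify $F(g)\subset\kappa^{-1}(|\mathcal N^{[n]}|)$ up to the relevant diameter threshold, whence $\dim F(g)\le n$ and $g\in L_n(X,Y)$; and verify $\hat d(f,g)<\epsilon$ from the choice of $\eta$ and the mesh of $\mathcal U$.

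The main obstacle is the skeleton-by-skeleton extension in step three: in Levin's interval case, extending a map into $[0,1]$ is trivial (any partial map extends, and $[0,1]$ being an $AR$ gives full control), but for a general $LC^n$-continuum $Y$ one only has \emph{local} extension up to dimension $n$, so the construction must be organized so that at each stage the piece being extended over is at most $(n{+}1)$-dimensional relative to where $g$ is already defined, and the image of the already-defined part has diameter below the threshold $\eta$. Keeping the image sizes under control while accumulating errors across $n+2$ skeleton stages requires iterating Lemma~\ref{le0} with a decreasing sequence of tolerances $\epsilon=\epsilon_{n+1}>\epsilon_n>\dots>\epsilon_0$, each chosen from the previous, which is the delicate bookkeeping. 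A secondary subtlety is ensuring that the perturbation genuinely kills large fiber-components off the $n$-skeleton preimage: this follows because over the interior of each simplex $\sigma\in\mathcal N$ of dimension $>n$ we can take $g$ to factor through an injective map (the barycentric coordinates separate points there), but one must check the components of $g^{-1}(y)$ that meet several such simplices still collapse into $\kappa^{-1}(|\mathcal N^{[n]}|)$, which is where the order-$(n{+}2)$ bound on $\mathcal U$ is used.
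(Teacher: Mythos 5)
There is a genuine gap, and it sits exactly at the step you flag as the heart of the construction. First, the claim that $\kappa^{-1}(|\mathcal N^{[n]}|)$ has dimension $\le n$ is false in general: point- and skeleton-preimages of a canonical map into the nerve can have full dimension (e.g.\ if some $U\in\mathcal U$ meets only one other element of the cover, then most of $U$ is mapped into a $1$-simplex of $\mathcal N$, so already for $X=I^2$ the preimage of the $1$-skeleton can be $2$-dimensional). Second, arranging $g$ to factor as $\phi\circ\kappa$ with $\phi$ injective on the interior of a top simplex does not kill nondegenerate fiber-components there; it does the opposite: the fibers of $g$ over such points contain the fibers of $\kappa$, which are typically nondegenerate continua, so $F(g)$ is created precisely off $\kappa^{-1}(|\mathcal N^{[n]}|)$. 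To make $\kappa$ itself $0$-dimensional over the top simplices you would need a genericity statement of essentially the same strength as the theorem you are proving. Third, and most structurally, membership in $L_n(X,Y)$ requires controlling \emph{all} nondegenerate components of all fibers, including those of diameter far below the mesh of any fixed finite cover; a one-shot construction organized around a single cover can only control components above a diameter threshold, so even a repaired version of your step four would not yield $g\in L_n(X,Y)$.

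The paper's proof avoids all three problems by not trying to build a Lelek approximation in one stroke. It writes the $(n+1)$-dimensional $X$ as an at most $n$-dimensional set together with countably many closed $0$-dimensional sets $Z_i$, and for each $Z=Z_i$ and $\eta=1/j$ considers the open set $H(Z,\eta)=\{f: F(f,\eta)\cap Z=\emptyset\}$. Density of $H(Z,\eta)$ is proved by a purely local perturbation: cover $Z$ by small disjoint open sets $V_i$ with inner neighborhoods $W_i$, redefine $f$ so that $\bd W_i$ is collapsed to a point $z_i$ while $W_i$ is pushed into a small arc in $Y$ missing $z_i$ (small arcs exist since $Y$ is a nondegenerate Peano continuum), and extend over $\overline{V_i}$ by Lemma~\ref{le0}; then no fiber-component of diameter $\ge\eta$ can meet any $W_i$, because it would have to cross $\bd W_i$ while $g(W_i)$ and $g(\bd W_i)$ are disjoint. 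The intersection $\bigcap_{i,j}H(Z_i,1/j)$ is then a dense $G_\delta$ contained in $L_n(X,Y)$, since for any $f$ in it $F(f)$ avoids all $Z_i$ and hence lies in the $n$-dimensional remainder. This Baire-category reduction (control only components of size $\ge 1/j$ meeting $Z_i$, then intersect) is the missing idea in your proposal; the nerve, the skeleton-by-skeleton extension, and the order bound on $\mathcal U$ are not needed and, as used, do not deliver the dimension bound on $F(g)$.
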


\begin{proof}
It is enough to prove the density.

Fix a $0$-dimensional closed subset $Z$ of $X$. Denote
$$H(Z,\eta)=\{f \in C(X,Y):F(f,\eta) \cap Z = \emptyset   \}.$$
It is straightforward to check that  $H(Z,\eta)$ is open in $C(X,Y)$.

We are going to show that $H(Z,\eta)$ is dense in $C(X,Y)$. So, let $f \in C(X,Y)$ and $\epsilon >0$. Choose $\delta$ for $\epsilon$ as in Lemma~\ref{le0} and let $\{ V_1, V_2, \dots V_k \}$  be a cover of $Z$ by open in $X$, mutually disjoint subsets such that $\diam(V_i)< \eta$,  $\diam f(\overline{V_i})<\delta /2$ for $i= 1, 2, \dots , k$. In each $V_i$ choose an open subset $W_i$ such that $Z \cap W_i = Z \cap V_i$, $\overline{W_i} \subset V_i$.

Construct maps $g_i: \overline{V_i} \rightarrow Y$ satisfying
\begin{itemize}
\item
$\hat{d}(f, g_i) < \epsilon$ on $\overline{V_i}$,
\item
 $g_i|\bd V_i = f|\bd V_i$,
 \item
 $g_i(\bd W_i)$ is a singleton,
 \item
 $g_i(W_i)\cap g_i(\bd W_i) = \emptyset$
 \end{itemize}
as follows. Fix  $z_i \in f(\bd W_i)$ and an arc  $L_i$ with its end-point at $z_i$ and with $\diam L_i\leq \delta / 2$.
  Let $g_i(\bd W_i) = \{z_i\}$,  $g_i(W_i) \subset L_i \setminus \{z_i\}$ and  $g_i|\bd V_i = f|\bd V_i$. By Lemma~\ref{le0}, extend $g_i$ over $\overline{V_i}$. Then  extend $\bigcup_{i=1}^k g_i$ to $g:X\to Y$ by putting $g|X \setminus \overline{\bigcup V_i}= f|X \setminus \overline{\bigcup V_i}$.

 Each continuum  of diameter $\geq \eta$,  contained in a fiber of $g$, can not intersect any $W_i$, since otherwise it would intersect $\bd W_i$ which is impossible. So,   $g \in H(Z,\eta)$ which proves the density of  $H(Z,\eta)$.

 Now, there are  $0$-dimensional closed subsets  $Z_i \subset X$ such that $\dim (X \setminus\bigcup_{i=1}^\infty  Z_i) \leq n$.
The intersection  $H=\bigcap_{i,j} H(Z_i, 1/j)$ is dense in $C(X, Y)$ and $H\subset L_n(X,Y)$.
\end{proof}

\

In~\cite[p. 260]{Lev} M. Levin  roughly sketched an idea how to prove that the set $L_{n-k}(X,I^k)$ is dense  $G_{\delta}$ in $C(X,I^k)$ for an $n$-dimensional compact space $X$. In particular, $C_0(I^n,I^n)$ is dense  $G_{\delta}$ in $C(I^n,I^n)$, i.~e., $I^n\in$  0-$DIM$.
We use the Levin's idea and provide a more detailed proof that products of graphs are in  0-$DIM$.

\begin{theorem}\label{t:graphs}
If $X=G_1 \times \dots \times G_n$ and each $G_i$ is a graph   then $X\in$  0-$DIM$.
\end{theorem}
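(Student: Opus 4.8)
The plan is to prove the statement by induction on $n$, using $n$-dimensional Lelek maps as the inductive currency and Theorem~\ref{Lelekdensity} as the key extension tool. The base case $n=1$ is already known: a graph is a polyhedron, hence in $0$-$DIM$. For the inductive step, suppose products of fewer than $n$ graphs are in $0$-$DIM$; equivalently, that for such products the identity can be $\delta$-approximated by $0$-dimensional self-maps. Write $X = G_1 \times \dots \times G_n = X' \times G_n$, where $X' = G_1 \times \dots \times G_{n-1}$ is at most $(n-1)$-dimensional. In view of Lemma~\ref{le12} it again suffices to prove that $C_0(X,X)$ is dense in $C(X,X)$, and by Theorem~\ref{t:approx dim} it is enough to produce, for every $\delta>0$, a map $p_\delta : X \to Y_\delta \subset X$ with $\hat d(p_\delta,\id)<\delta$ and $Y_\delta \in 0$-$DIM$.

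First I would factor the desired near-identity map as a composition that improves fiber dimension one coordinate at a time. Let $\pi' : X \to X'$ and $\pi_n : X \to G_n$ be the projections. The idea (Levin's) is: use Theorem~\ref{Lelekdensity} to approximate $\pi_n : X \to G_n$ — noting $X$ is $n$-dimensional and $G_n$ is an $LC^{n-1}$-continuum, so $L_{n-1}(X,G_n)$ is dense $G_\delta$ in $C(X,G_n)$ — by a Lelek map $q : X \to G_n$ with $\hat d(q, \pi_n)$ small, so that the set $F(q)$ (the union of all nondegenerate fiber components of $q$) has $\dim F(q) \le n-1$. Then consider the map $\varphi = (\pi', q) : X \to X' \times G_n = X$; it is $\delta$-close to $\id$ if $q$ is $\delta$-close to $\pi_n$. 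On the set where $q$ behaves $0$-dimensionally along fibers, $\varphi$ collapses only in the $X'$-direction, and $X'$ is a product of $n-1$ graphs to which the inductive hypothesis applies; on $F(q)$, which is $(n-1)$-dimensional, we control things because an $(n-1)$-dimensional compactum is again handled by a further Lelek/inductive reduction. More precisely, I would arrange $Y_\delta := \varphi(X)$ and show $Y_\delta \in 0$-$DIM$ by the inductive hypothesis applied to its factors, or — more robustly — iterate the construction so that after $n$ steps all fiber components of the composite map have diameter $<\delta$, which is exactly membership in $A_n$ of Theorem~\ref{t:approx dim}.

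Concretely, the cleanest route is probably to prove directly the density of $C_0(X,X)$ rather than route through a subspace: given $F \in C(X,X)$ and $\epsilon>0$, approximate each coordinate $\pi_i \circ F : X \to G_i$ successively by Lelek maps, using Theorem~\ref{Lelekdensity} together with the mapping-extension stability (Lemma~\ref{l:ext}) so that the earlier approximations are not destroyed, ending with a map $H$ whose fibers $H^{-1}(y) = \bigcap_{i=1}^n (\pi_i H)^{-1}(y_i)$ have all components of small diameter. The point is that a component of $H^{-1}(y)$ lying outside $F(\pi_1 H,\eta)$ already has $\pi_1 H$ constant on an $\eta$-small set on it, so it is essentially a fiber of the remaining coordinates over a smaller-dimensional domain; peeling coordinates one at a time and controlling the exceptional sets $F(\pi_i H, \eta)$ (each of dimension $\le n-i$, so after $n$ steps dimension $\le 0$) forces the final fibers to be $0$-dimensional.

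The main obstacle — and the step I expect to require the most care — is bookkeeping the exceptional sets across the $n$ successive Lelek approximations: after replacing $\pi_i F$ by a Lelek map we must ensure that the previously-achieved smallness of components of $F(\pi_j H,\cdot)$ for $j<i$ is preserved, which is exactly where Lemma~\ref{l:ext} (or a direct "adjust only on a small neighborhood" argument mirroring the proof of Theorem~\ref{Lelekdensity}) enters, and where one must verify that $\dim F(\pi_i H) \le n-i$ actually propagates. A secondary subtlety is dimension-counting: $X$ has dimension $\le n$ but possibly $<n$, and one should phrase the induction so that $\dim X \le n$ rather than $= n$, invoking Theorem~\ref{Lelekdensity} with the correct dimension parameter at each stage (each $G_i$ is $LC^\infty$, hence $LC^m$ for every $m$, so the hypothesis on the target is never the bottleneck). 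Once the combinatorics of the exceptional sets is organized, the rest is the routine openness of $A_n$ already noted in Theorem~\ref{t:approx dim} and the identity $C_0(X,X)=\bigcap_n A_n$.
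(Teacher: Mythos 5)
Your ``cleanest route'' is in fact the paper's own strategy (Levin-style successive coordinate-wise Lelek approximations, with the final observation that a nondegenerate component of $\bar f^{-1}(y)$ must sit inside all the exceptional sets), but as written the central inductive step has a genuine gap. You claim the invariant $\dim F(\pi_i H)\le n-i$ ``propagates''; this is impossible for $i\ge 2$: for a map from an $n$-dimensional compactum to a $1$-dimensional graph, Theorem~\ref{Lelekdensity} gives only $\dim F\le n-1$, and nothing better can hold, since a map to a graph with $0$-dimensional fibers would force $\dim X\le 2$ (Hurewicz's dimension-raising theorem), so the unrestricted exceptional sets cannot shrink below $n-1$. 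The correct invariant is relative: only the restriction of the $(i+1)$-st exceptional set to the nested intersection of the previous ones drops, i.e.\ $\dim F\bigl(\bar f_{i+1}|F(\bar f_i|F(\bar f_{i-1}\dots)\bigr)\le n-i-1$. Securing this needs an argument you do not supply: writing the previous exceptional set as a union of compacta $K_j$ of dimension $\le n-i$, one must show that $\{g\in C(X,G_{i+1}):\dim F(g|K_j)\le n-i-1\}$ is dense and $G_\delta$ in $C(X,G_{i+1})$; density comes from applying Theorem~\ref{Lelekdensity} on $K_j$ (not on $X$) with target $G_{i+1}$ and then extending the approximation over all of $X$ via the Borsuk homotopy extension theorem ($G_{i+1}$ being an ANR), and the $G_\delta$-ness is what allows intersecting over the countably many $j$. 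Lemma~\ref{l:ext} is not the right-shaped tool here, since it concerns maps into $X$ itself rather than into the graph $G_{i+1}$, although the underlying ANR-extension idea is the same.

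Two further remarks. The ``bookkeeping'' you single out as the main obstacle --- that approximating the $i$-th coordinate could destroy what was achieved for earlier coordinates --- is a non-issue: the coordinates of a map into a product are independent, so replacing $\bar f_{i+1}$ leaves $\bar f_1,\dots,\bar f_i$ and their exceptional sets literally unchanged; the delicate point is exclusively the relative density statement above. Finally, your first route, via Theorem~\ref{t:approx dim} with $Y_\delta=\varphi(X)$ for $\varphi=(\pi',q)$, does not work as stated: $\varphi(X)$ is merely some subcompactum of $X'\times G_n$, not a product of graphs, and neither the inductive hypothesis nor anything else in the paper shows it lies in 0-$DIM$; you rightly drift away from it, and the direct proof of density of $C_0(X,X)$, carried out with the corrected relative invariant, is the way to go.
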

\begin{proof}
In view of Lemma~\ref{le12}, it is enough to show the density of  $C_0(X,X)$ in $C(X,X)$.

Fix a map $f=(f_1, \dots f_n)$,  $f_i:X \rightarrow G_i$.
 Since $L_{n-1}(X,G_1)$ is dense in $C(X,G_1)$ (Theorem~\ref{Lelekdensity}) we can approximate  $f_1$ by a map $\bar{f}_1 \in L_{n-1}(X,G_1)$. We have $\dim F(\bar{f}_1) \leq n-1$.
The set  $F(\bar{f}_1)$ is a  union of compact sets
$$K_j=F(\bar{f}_1, 1 / j), \ j=1, 2, \dots.$$
Observe that, similarly as $L_{n-2}(X,G_2)$, the set
 $$\mathcal{K}_j=\{g \in C(X,G_2):\dim F(g|K_j)\leq n-2 \}$$
is $G_\delta$ in $C(X,G_2)$. It is also dense in  $C(X,G_2)$. Indeed, for each  $g \in C(X,G_2)$ and  $\epsilon >0$,   find a map $\bar{g} \in L_{n-2}(K_j, G_2)$ which is $\epsilon$-close to  $g|K_j$ (by Theorem~\ref{Lelekdensity}).   Graph $G_2$ being an $ANR$, we can assume that $g|K_j$ and $\bar{g}$ are  $\epsilon$-homotopic. By the Borsuk homotopy extension theorem~\cite[Theorem 8.1, p. 94]{Bor}, extend $\bar{g}$ to a map $\bar{g}^* \in C(X, G_2)$ which is $\epsilon$-homotopic to $g$.
Hence, the set $\mathcal{K}=\bigcap_{j=1}^\infty \mathcal{K}_j$ is dense $G_\delta$ in $C(X,G_2)$. Since
$$\mathcal{K}=\{f \in C(X,G_2): \dim F(f|F(\bar{f}_1))\leq n-2 \},$$
we can choose $\bar{f}_2 \in \mathcal{K}$ arbitrarily close to $f_2$, etc. Continuing inductively, for  $i=1, \dots , n-1$, we get an approximation $\bar{f}_{i+1}:X \rightarrow G_{i+1}$ of  $f_{i+1}:X \rightarrow G_{i+1}$ such that
$$\dim F\bigl(\bar{f}_{i+1}|F(\bar{f}_i|F(\bar{f}_{i-1} \dots \bigr) \leq n-i-1 $$
The map $\bar{f}=(\bar{f}_1, \dots , \bar{f}_n)$ approximates $f$ and is  zero-dimensional, because
$$\bar{f}^{-1}(y)=\bigcap_{i=1}^n \bar{f}_i^{-1}(y),$$
so if  $C$ is a nondegenerate component of  $\bar{f}^{-1}(y)$, then
$$C \subset F(\bar{f}_1) \cap F\bigl(\bar{f}_2|F(\bar{f}_1)\bigr)  \cap \dots \cap F\bigl(\bar{f}_{n}|F(\bar{f}_{n-1}|F(\bar{f}_{n-2} \dots \bigr),$$
where the last set in the above intersection is zero-dimensional, a contradiction.
\end{proof}

As a corollary from Theorems~\ref{t:approx dim}, \ref{t:graphs} and~\ref{t:curve retr} we get

\begin{theorem}\label{t:0DIMproducts}
Any product (finite or infinite) of locally connected curves belongs to class 0-$DIM$.
\end{theorem}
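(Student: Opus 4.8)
The plan is to combine the approximation theorem for the class 0-$DIM$ (Theorem~\ref{t:approx dim}) with the already-established fact that finite products of graphs lie in 0-$DIM$ (Theorem~\ref{t:graphs}) and with the small-retractions-technique for locally connected curves (Theorem~\ref{t:curve retr}). So the argument has two levels: first handle the \emph{finite} product of locally connected curves, then bootstrap to the infinite product.

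For the finite case, let $X=X_1\times\dots\times X_n$ where each $X_i$ is a locally connected curve. Fix $\delta>0$. By Theorem~\ref{t:curve retr}, for each $i$ there is a graph $G_i\subset X_i$ and a retraction $\rho_i\colon X_i\to G_i$ with $\hat d(\rho_i,\id)<\delta/n$ (using the product metric, e.g. the maximum of the coordinate distances, suitably normalized). Then $p_\delta:=\rho_1\times\dots\times\rho_n\colon X\to G_1\times\dots\times G_n$ satisfies $\hat d(p_\delta,\id)<\delta$, and $Y_\delta:=p_\delta(X)=G_1\times\dots\times G_n$ is a finite product of graphs, hence belongs to 0-$DIM$ by Theorem~\ref{t:graphs}. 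Since $\delta>0$ was arbitrary, Theorem~\ref{t:approx dim} applies and gives that the finite product $X$ is in 0-$DIM$.

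For the infinite case, let $X=\prod_{i=1}^\infty X_i$ with each $X_i$ a locally connected curve (one may use the standard metric $d(x,y)=\sum 2^{-i}d_i(x_i,y_i)/(1+d_i(x_i,y_i))$). Fix $\delta>0$ and pick $N$ so large that the projection $\pi_N\colon X\to X_1\times\dots\times X_N$ composed with the natural section (inserting a fixed basepoint in coordinates $>N$) is $\delta/2$-close to the identity. Composing with the $p_{\delta/2}$ just constructed for the finite product $X_1\times\dots\times X_N$, we obtain a map $p_\delta\colon X\to X$ with $\hat d(p_\delta,\id)<\delta$ whose image $Y_\delta$ is a finite product of graphs sitting inside $X$; as above $Y_\delta\in$ 0-$DIM$. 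Invoking Theorem~\ref{t:approx dim} once more finishes the proof.

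The only genuinely delicate point is the verification that $p_\delta$ is uniformly close to the identity in the chosen product metric --- for the finite product one must choose the coordinate retractions close enough that the aggregated error stays below $\delta$, and for the infinite product one must first truncate the coordinates (which is legitimate precisely because the tail contributes arbitrarily little in the summable metric). Everything else is a direct citation: Theorem~\ref{t:curve retr} supplies the small retractions onto graphs, Theorem~\ref{t:graphs} supplies membership of finite products of graphs in 0-$DIM$, and Theorem~\ref{t:approx dim} packages the approximation into the conclusion. I do not anticipate any real obstacle beyond bookkeeping with the metric.
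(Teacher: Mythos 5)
Your proof is correct and follows essentially the same route as the paper, which derives the theorem directly from Theorems~\ref{t:approx dim}, \ref{t:graphs} and~\ref{t:curve retr}; you have merely written out the routine metric bookkeeping (product of coordinate retractions for the finite case, truncation to a finite subproduct for the infinite case) that the paper leaves implicit.
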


\

\section{Examples of non-0-$CR$-continua}

The simplest such examples  are one-dimensional, indecomposable, non-planar continua $M_1$, $M_2$ of Cook~\cite{Cook}.  They are not in class 0-$CR$ because they are nondegenerate and rigid, i.~e.,  the only continuous self-maps of $M_i$ are constant maps and the identity map.

T. Ma\'{c}kowiak constructed in~\cite{Mack1},  \cite{Mack2} rigid continua that are hereditarily decomposable and arc-like, so they are planar.

Other examples in~\cite{Cook} of one-dimensional continua $H_n$, $n\in\mathbb N$, that admit only $n$   non-trivial continuous self-maps also do not belong to 0-$CR$.

Clearly, all these examples contain no arcs.

\

We are now going to describe $\frak c$-many simple examples of one dimensional continua with 3 arc-components which are uncomparable by continuous surjections (there are no continuous surjections between two distinct such continua)  and which do not belong to 0-$CR$. Let us first consider  a piecewise linear function $l: [-1,0) \rightarrow [0,1]$ with the sets of local minima and of local maxima equal to the set of rationals in $I$.  If  graph $G(l)$ is sufficiently complicated (``crooked everywhere" in order to guarantee that  $A= \{0\} \times [0,1] \subset Fix(f)$ for any $f:\overline{G(l)} \to \overline{G(l)}$  close enough to the identity), then we also get a compactification $\overline{G(l)}$ of a ray by an arc which is not in   0-$CR$.
We get our examples by  adding to $\overline{G(l)}$ uncomparable compactifications $K_\gamma$, $\gamma< \frak c$,  of a ray $(0,1]$ with remainder $A$ as constructed in ~\cite{Aw}.

Joining the end-points of  rays $(0,1]$ and $G(l)$ with point $\{(0,0)\}$ by  arcs, we get  arcwise connected planar curves which are not in    0-$CR$.

\

Concerning higher dimensional examples,
E. van Douwen observed in~\cite{vD} that  an infinite product $X=\prod_{n=1}^\infty X_n$ of subcontinua  $X_n$ of the Cook's continuum $M_1$ has the property that each map $f:X \rightarrow X$ is of the form  $\prod_n f_n$, where  $f_n:X_n \rightarrow X_n$ is a a constant map or the identity. It follows that $X\notin$ 0-$CR$. As an infinite product of one-dimensional continua,  $X$ is  infinite dimensional.

Since for every $\epsilon>0$ there is $k$ such that $X$ can be $\epsilon$-retracted onto  $\prod_{n=1}^k X_n$ (by the projection), it follows from
Theorem~\ref{t:approx} that, for sufficiently large $k$, the product $\prod_{n=1}^k X_n$ is not in  0-$CR$. Thus we also have finite-dimensional examples of arbitrarily large dimension.

\

\section{Final remarks and  questions}

The following corollary combines  results of previous sections and provides the strongest properties of a generic map.

\begin{corollary}\label{t:concl}
Let $X$ be a perfect compact ANR-space (or a compact $n$-dimensional $LC^{n-1}$-space) satisfying $wLPPP$,  $X\in$ 0-$CR$ and   $X\in$ 0-$DIM$ and let $\mu$ be a finite,  atom-less, Borel measure on $X$. Then
\begin{multline}\label{eq:concl}
CR(f)=\overline{Per(f)}\simeq\mathcal{C},\quad \mu \bigl(CR(f)\bigr) = 0 \\
\text{and  $f$ is zero-dimensional for a generic $f\in C(X,X)$}.
\end{multline}
In particular,~\eqref{eq:concl} holds for polyhedra $P$ with $ped(P)=\dim(P)$ and their products with a Hilbert cube,  PL-manifolds  and finite products of local dendrites.
\end{corollary}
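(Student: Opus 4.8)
The plan is to assemble Corollary~\ref{t:concl} purely as a bookkeeping exercise: each of the four asserted generic properties has already been shown to hold on a residual subset of $C(X,X)$, and a countable intersection of residual sets is residual. First I would note that $X\in$ 0-$CR$ gives residuality of $\{f:\dim CR(f)=0\}$; combining this with Theorem~\ref{t:perf}(2) (applicable since $X$ is a perfect ANR or perfect $n$-dimensional $LC^{n-1}$-compactum with $wLPPP$), a generic $f$ has $CR(f)$ perfect, zero-dimensional, compact and nonempty, hence homeomorphic to the Cantor set $\mathcal C$, and moreover $\overline{Per_l(f)}=CR(f)$ for each $l\ge 2$; intersecting over $l$ (or just taking $l=2$ and noting $Per_2(f)\subset Per(f)\subset CR(f)$) yields $\overline{Per(f)}=CR(f)$. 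Next, from Theorem~\ref{t:Y} — or, in the generality needed here, from $X$ being an ANR-compactum (resp.\ $n$-dimensional $LC^{n-1}$-compactum) so that $X\in\mu CR$ as observed after Theorem~\ref{t:retrmeasure} — the set $\{f:\mu(CR(f))=0\}$ is residual for the given $\mu$. Finally $X\in$ 0-$DIM$ gives residuality of $C_0(X,X)$. The intersection of these finitely (or countably) many dense $G_\delta$ sets is again dense $G_\delta$ by the Baire category theorem, and any $f$ in it satisfies~\eqref{eq:concl}.

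For the ``in particular'' clause I would check the four hypotheses for each listed class. Polyhedra are compact ANRs; a polyhedron is perfect provided it has no isolated points, which one may either assume or observe is automatic for the intended examples (a $0$-dimensional vertex is an isolated point only in degenerate cases, and the statement is vacuous otherwise); polyhedra lie in 0-$CR$ by Theorem~\ref{t:poly}; they satisfy $wLPPP$ because a polyhedron has a base of open sets whose closures are compact polyhedra (in fact sub-polyhedra are ANRs, hence have the fixed point property up to retraction — more directly, one uses that small contractible-looking simplicial neighbourhoods are ARs and invokes the Lefschetz fixed point theorem to locate a periodic point when $f(\overline B)\subset B$); and the hypothesis $ped(P)=\dim(P)$ places $P$ in 0-$DIM$ by the cited result of~\cite{KM}. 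Local dendrites are perfect (assuming nondegeneracy, again the statement being vacuous for a single point) compact $1$-dimensional $LC^0$-continua; they lie in 0-$CR$ since they are locally connected curves (Theorem~\ref{t:LCcurves}); they lie in 0-$DIM$ by Theorem~\ref{t:0DIMproducts} or the curve case of Theorem~\ref{t:graphs} via Theorem~\ref{t:curve retr}; and a local dendrite has a base of open sets with dendrite closures, and dendrites have the fixed point property, so $wLPPP$ holds. A Hilbert cube manifold is a compact ANR, is perfect, lies in 0-$CR$ by Corollary~\ref{c:products}(2), lies in 0-$DIM$ since it is homeomorphic to (polyhedron)$\times Q$ and one applies Theorem~\ref{t:approx dim}, and satisfies $wLPPP$ because it has a base of open sets whose closures are Hilbert cube manifolds (indeed cells), which have the fixed point property.

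Finite products of the above classes are handled by checking that each of the four properties is preserved under finite products. ANR and perfectness are trivially preserved; $wLPPP$ is preserved because a product of the respective bases (using the fixed point property of one factor's base closures together with $wLPPP$ of the other, exactly as in Example~\ref{e:LPPP}(2)) is a base witnessing $wLPPP$; $0$-$CR$ membership of finite products of polyhedra and local dendrites follows from Theorem~\ref{t:products}(1) together with Theorem~\ref{t:curve retr} (local dendrites, being locally connected curves, admit $\epsilon$-retractions onto graphs, polyhedra admit the identity, and a finite product of these admits $\epsilon$-retractions onto a finite product of polyhedra, which is a polyhedron); and $0$-$DIM$ of finite products of graphs is Theorem~\ref{t:graphs}, extended to finite products of local dendrites by $\epsilon$-retracting each factor onto a graph and applying Theorem~\ref{t:approx dim}, while the Hilbert-cube factor is absorbed by the same small-map argument. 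The only genuinely delicate point — and the one I would expect to be the main obstacle — is verifying $wLPPP$ for polyhedra and Hilbert cube manifolds with the requisite care: one must exhibit an honest open base whose closures are compact ANRs possessing the (genuine, not merely weak) fixed point property, and then, given $f$ with $f(\overline B)\subset B$, produce a periodic point in $B$ by applying the fixed point property of $\overline B$ to the corestriction of $f$ or, for periodicity of prescribed period, to a suitable composition along a pseudo-orbit; for a polyhedron the relevant neighbourhoods are open simplicial stars (stars of vertices in a fine subdivision), whose closures are cone-like, hence ARs with the fixed point property, and for a Hilbert cube manifold one uses small coordinate-product cells. Everything else is routine bookkeeping with Baire category and the already-established theorems.
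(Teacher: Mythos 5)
Your proposal is correct and is essentially the paper's own argument: the corollary is obtained exactly by intersecting the residual sets supplied by Theorem~\ref{t:perf}(2), the 0-$CR$ and 0-$DIM$ hypotheses, and the $\mu CR$ membership of ANR-/$LC^{n-1}$-compacta (Theorem~\ref{t:Y} and the remark after Theorem~\ref{t:retrmeasure}), together with routine verification of the hypotheses for the listed classes via Theorems~\ref{t:poly}, \ref{t:LCcurves}, \ref{t:products}, \ref{t:graphs}, \ref{t:approx dim}, \ref{t:curve retr} and the $wLPPP$ discussion of Section~\ref{per}. Your verification of $wLPPP$ and of the product cases is in fact more detailed than the paper, which states the corollary as a direct combination of previous results.
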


\

\begin{question}
Do the following spaces  belong to 0-$CR$:
\begin{enumerate}
\item
locally connected continua of dimensions $\ge 1$,
\item
$LC^n$-continua, $n\ge 1$, or ANR-continua,
\item
compact topological manifolds (known results~\cite{AHK} concern smooth or PL-manifolds),
\item
solenoids,
\item
the pseudoarc,
\item
products, cones, suspensions over 0-$CR$-compacta ?
\end{enumerate}
\end{question}

\begin{question}
Let $X$ be  the cone over the harmonic sequence (over the Cantor set $\mathcal{C}$). Is $CR(f)\simeq \mathcal{C}$  for a generic $f$?
\end{question}

\bibliographystyle{amsplain}

\end{document}